\documentclass{article}
\pdfoutput=1
\usepackage{amsmath,amsfonts,amsthm,amssymb,mathtools}
\usepackage{setspace}
\usepackage{fancyhdr}
\usepackage{lastpage}
\usepackage{extramarks}
\usepackage{chngpage}
\usepackage{soul,color}
\usepackage{graphicx,float,wrapfig}
\usepackage[small,nohug,heads=vee]{diagrams}

\usepackage{tikz}
\usepackage{tikz}

\usepackage{hyperref}
\usepackage{extarrows}
\usepackage{graphicx}
\hypersetup{colorlinks=true,
linkcolor=blue,
citecolor=red}
\diagramstyle[labelstyle=\scriptstyle]
\newcommand{\bO}{\mathcal{O}}
\newcommand{\spec}{\mathrm{Spec}}
\newcommand{\Hom}{\mathrm{Hom}}
\newcommand{\Class}{Special cycles on orthogonal Shimura varieties}
\newtheorem{thm}{Theorem}[section]
\newtheorem{lem}[thm]{Lemma}

\topmargin=-0.45in      %
\evensidemargin=0in     %
\oddsidemargin=0in      %
\textwidth=6.5in        %
\textheight=9.0in       %
\headsep=0.25in         %

\title{\textmd{\bf \Class }}
\date{}
\author{}

\begin{document}

\begin{spacing}{2.2}

\maketitle \thispagestyle{empty}


\centerline{\textbf{Abstract}}

In this paper, we investigate a general method to establish tame and norm relations for special cycles in Shimura varieties, using unitary cycles in odd orthogonal Shimura varieties as a guiding example.

\tableofcontents

\section{Introduction}

A classical example of special cycles is Heegner points on modular curves. We will generalize it to higher dimensional Shimura varieties. More precisely, we will consider higher dimensional orthogonal Shimura varieties. Fix a totally real number field $F$, and a quadratic CM field extension $F\longrightarrow E$. We will consider a $2n+1$ dimensional $F$-linear space $V$ equipped with a quadratic form $Q$ with signature $(2,2n-1)$ at a real place of $F$ and $(0,2n+1)$ at all other real places, such that it contains an $n$-dimensional $E$-hermitian hyperplane $W$. We define $\textbf{G}=\mathrm{Res}_{F/\mathbb{Q}}(SO(V))$, $\textbf{H}=\mathrm{Res}_{F/\mathbb{Q}}(U(W))$, in this way we get an embedding of Shimura datum  $Sh_{\textbf{H}} \longrightarrow  Sh_{\textbf{G}}$, choose a small enough level group $K$ for $\textbf{G}(\mathbb{A}_f)$ so that we get a map between Shimura varieties $Sh_\textbf{H}(H(\mathbb{A}_f)\bigcap K)\longrightarrow Sh_\textbf{G}(K)$. The image of a geometric connected component of the unitary Shimura varieties defines a special cycle on the orthogonal Shimura varieties (for suitable K, this is a closed embedding already, although we will not need this fact). Then the basic idea is to apply $\textbf{G}(\mathbb{A}_f)$ translation to get a family of special cycles. Inside this family, we will construct special cycles with suitable distribution properties.

The first problem is to understand the Hecke action and the Galois action on this family of special cycles. We will use Deligne's reciprocity law on the set  of geometric connected components to describe the Galois action. To construct special cycles with the desired tame relations and norm relations, we will first study them locally. In the local case, we need to study the \textbf{H}-action on the Bruhat-Tits buildings of \textbf{G}. Inspired by work of Boumasmoud and Loeffler, we first obtain some purely group theoretical relations ("\textbf{the abstract relations}").

The next problem is to translate these purely group theoretical relations into "true" distribution relations ("\textbf{the realization}"). For this step, we require more conditions. For the tame relation, we need  the congruence conjecture to relate the Hecke polynomial to the Galois polynomial. For the norm relation, we need  conductor growth conditions, and ordinary conditions.

Although for simplicity, we only discuss orthogonal Shimura varieties in the main part, this approach is flexible and works in other cases.  We give some other examples as an application, $Gsp_4$ Shimura varieties and unitary GGP pair Shimura varieties. The unitary case was largely solved by Boumasmoud (see \cite{reda2019}) already. Our framework can produce similar results and also works for the corresponding similitude groups.

The idea of applying $\textbf{G}(\mathbb{A}_f)$ translation is not new. Loeffler, Cornut and many other people have already used this idea to construct Euler systems (see \cite{loeffler2017}, \cite{loeffler2020}, \cite{graham2020}, \cite{cornut2018}, \cite{reda2019}, \cite{jetchev2014}). Our method is different from Loeffler's  methods. He changes the level group of $\textbf{G}(\mathbb{A}_f)$ to realize the Galois action, and for the tame relation, one of the key point is to use a multiplicity one result, which does not hold in our setting. Instead we follow the method of Cornut,  use geometric connected components, and the Galois group acts on this set through Deligne's reciprocity law. The purely group theoretical relations are established using Boumasmoud's "Seed Relation" (see \cite{reda2019}).  We then need to connect these abstract local relations to genuine relations.  For the tame relations, this is  about understanding the Galois action on $\pi_0(Sh_\textbf{H})$. For the norm relations, Loeffler has developed a general method to construct norm compatible systems (see \cite{loeffler2019}),  and our realization step for the norm relation is inspired by his paper. Along the way, we find some special properties about the stabilizer group, and we conjecture it holds in general. This stabilizer conjecture will help us calculate conductors. We put this conjecture in the appendix because  it may have independent interest in representation theory.

More important problems are about arithmetic applications and further developments. The main arithmetic application is about the Bloch-Kato conjecture and Iwasawa theory. Cornut has deduced rank one result for Selmer group in our setting under some technical conditions (see \cite{cornut2018}). His result needs  nontriviality conditions, which is widely open at present. We would need an explicit reciprocity law, or a Gross-Zagier formula, to relate the resulting special cycles to $L$-functions ($p$-adic or complex). We also wish to use this family of special cycles to study Iwasawa Main conjectures.

The structure of this paper is as follow. After this introduction, the second section gives the general setting. In its first subsection, we work globally, establish some properties of such family of special cycles and show how to translate global problems into local setting. In its second subsection, we work in a local setting, state  Boumasmoud's relation, which is the key tool for both relations. The next two sections respectively consider the tame and norm relations. They follow the same pattern: in both cases, we first establish an abstract version of the desired relation.  In the fifth section we look at  other examples and  arithmetic applications. We construct other examples, for $Gsp_4$ or unitary type Shimura varieties. Then we deal with cohomology theory to realize special cycles as special elements in Galois representations (Euler system). Then we discuss some further questions. In the appendix, we discuss the stabilizer conjecture and prove it for some important cases, including  our main example and for symmetric pairs $(H,G)$.

\textbf{Acknowledgement} Firstly I thank  my director Christophe Cornut, for his great patience and useful help. I have also benefited very much from Loeffler's and Boumasmoud's idea. And I  thank  my friends, Wu Zhixiang, Xu Kai, Zou Jiandi, for their encouragement and great help.
This research  has received funding from the European Union's Horizon 2020 research and innovation programme  under the Marie Sklodowska-Curie grant agreement No 754362. \begin{figure}[h]\includegraphics[width=0.07\textwidth]{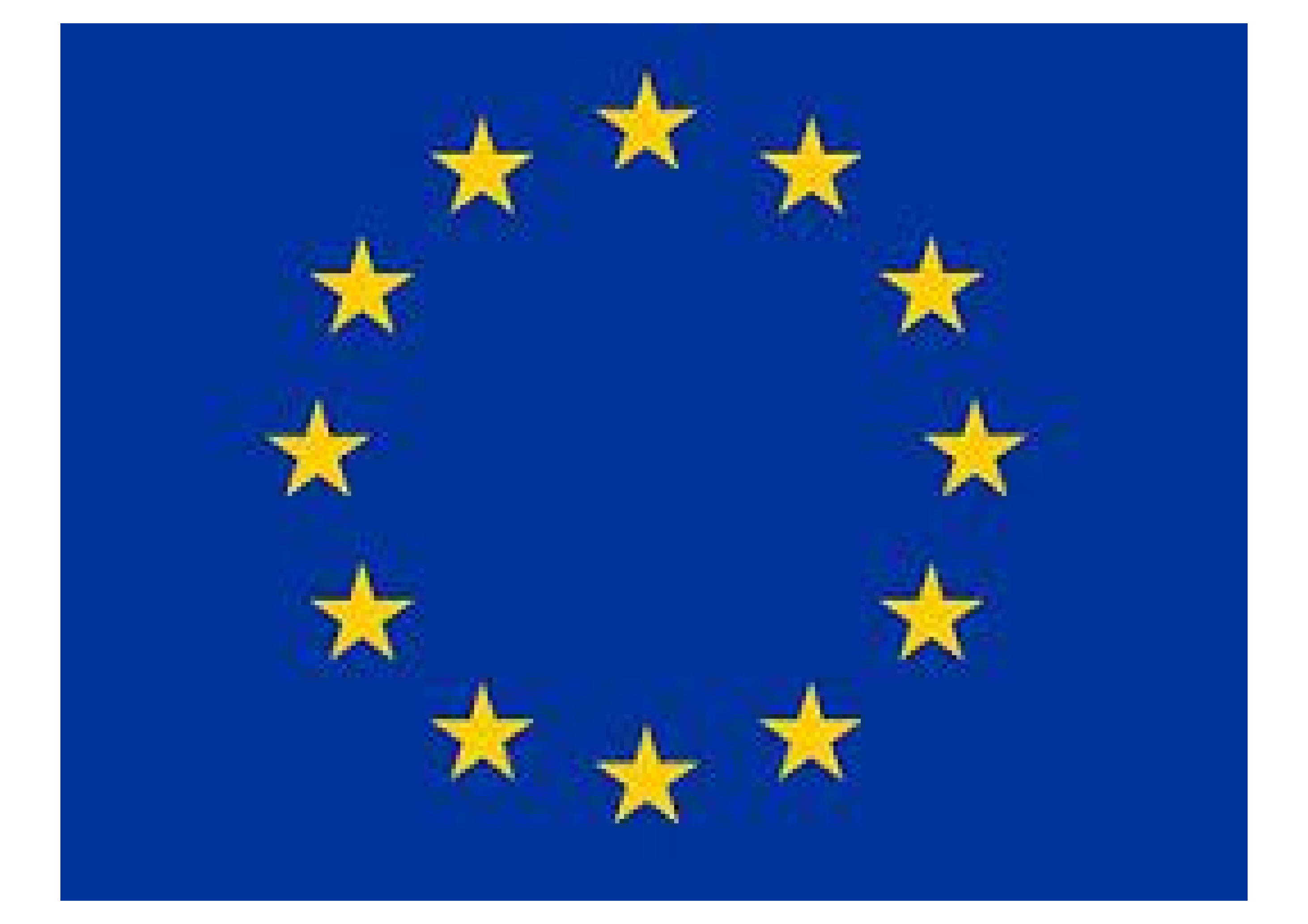}\end{figure}

\newpage

\section{General setup}

\subsection{Global setting}
\label{gloset}

Consider an embedding of Shimura datum $Sh_{\textbf{H}}\longrightarrow Sh_{\textbf{G}}$, where $\textbf{H}\longrightarrow \textbf{G}$ is an embedding of reductive groups over $\mathbb{Q}$ inducing compatible maps $\mathrm{Res}_{\mathbb{C}/\mathbb{R}}\mathbb{G}_m(\mathbb{R} )\longrightarrow \textbf{H}({\mathbb{R}}) \longrightarrow \textbf{G}({\mathbb{R}})$. Suppose the associated hermitian symmetric domain for $Sh_{\textbf{H}}$ is $Y$, the connected component is $Y^{0}$, the hermitian symmetric domain for $Sh_\textbf{G}$ is $X$. For a small enough level group $K$ for $\textbf{G}(\mathbb{A}_f)$ (here $\mathbb{A}_f$  denotes the finite adeles of $\mathbb{Q}$), we obtain Shimura varieties $Sh_\textbf{G} (K)=\textbf{G}(\mathbb{Q})\backslash X \times \textbf{G}(\mathbb{A}_f) /K$ and $Sh_\textbf{H} (\textbf{H}(\mathbb{A}_f)\bigcap K)=\textbf{H}(\mathbb{Q})\backslash Y \times \textbf{H}(\mathbb{A}_f) / (\textbf{H}(\mathbb{A}_f)\bigcap K )$. For $g \in \textbf{G}(\mathbb{A}_f)$, $Z_K(g)=[Y^0 \times gK]$ in $Sh_\textbf{G}(K)$ defines a cycle (irreducible and closed). We thus obtain a family of special cycles $Z_K(\textbf{G},\textbf{H})=\{Z_K(g)\}$, which has the following parametrization:

\begin{lem}
\label{para}
 The natural map $Stab_{\textbf{G}(\mathbb{Q})} (Y^0)  \backslash \textbf{G}(\mathbb{A}_f)/K \longrightarrow Z_K(\textbf{G},\textbf{H})$ is a  bijection.
\end{lem}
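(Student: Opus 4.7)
The plan breaks into three pieces: well-definedness of the map, surjectivity (immediate), and injectivity (the heart of the argument).

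For well-definedness, I need $Z_K(g)$ to be unchanged when $g$ is replaced by $\gamma g k$ with $\gamma \in \mathrm{Stab}_{\textbf{G}(\mathbb{Q})}(Y^0)$ and $k \in K$. Using the defining equivalence relation on $Sh_\textbf{G}(K)$, one has $[y, \gamma g k] = [\gamma^{-1} y, g]$ for every $y \in Y^0$, and as $y$ ranges over $Y^0$ so does $\gamma^{-1} y$ since $\gamma^{-1} Y^0 = Y^0$; hence $Z_K(\gamma g k) = Z_K(g)$. Surjectivity is tautological from the definition $Z_K(\textbf{G}, \textbf{H}) = \{Z_K(g) : g \in \textbf{G}(\mathbb{A}_f)\}$.

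For injectivity, suppose $Z_K(g_1) = Z_K(g_2)$ as subsets of $Sh_\textbf{G}(K)$. Unpacking the equivalence, this says that for every $y \in Y^0$ there exist $\gamma \in \textbf{G}(\mathbb{Q})$, $k \in K$ with $\gamma y \in Y^0$ and $\gamma g_1 k = g_2$. The relation $\gamma g_1 k = g_2$ forces $\gamma \in \textbf{G}(\mathbb{Q}) \cap g_2 K g_1^{-1}$, a countable set since $\textbf{G}(\mathbb{Q})$ is countable; enumerate it as $\{\gamma_i\}$. Setting $U_i := Y^0 \cap \gamma_i^{-1}(Y^0)$, the hypothesis becomes $Y^0 = \bigcup_i U_i$. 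Now each $\gamma_i^{-1} Y^0$ is a complex submanifold of $X$ of the same dimension as $Y^0$, since $\gamma_i$ acts biholomorphically on $X$. Either $\gamma_i^{-1} Y^0 = Y^0$ — equivalently $\gamma_i \in \mathrm{Stab}_{\textbf{G}(\mathbb{Q})}(Y^0)$, in which case $U_i = Y^0$ — or $U_i$ is a proper closed analytic subset of $Y^0$, and hence nowhere dense (otherwise the two distinct submanifolds would agree on an open piece and, by analytic continuation plus connectedness of $Y^0$ plus equal dimensions, coincide globally). Since $Y^0$ is a connected complex manifold, it is a Baire space and cannot be written as a countable union of nowhere-dense closed subsets, so at least one $\gamma_i$ lies in $\mathrm{Stab}_{\textbf{G}(\mathbb{Q})}(Y^0)$, giving $g_2 \in \mathrm{Stab}_{\textbf{G}(\mathbb{Q})}(Y^0) \cdot g_1 K$ as required.

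The main obstacle I anticipate is the analytic-geometry step: that two connected complex submanifolds of $X$ of equal dimension either coincide or meet in a nowhere-dense set. This ultimately rests on the identity principle — a holomorphic map vanishing on an open subset of a connected complex manifold vanishes identically — together with the fact that $Y^0$, being a connected component of the Hermitian symmetric domain for $\textbf{H}$ sitting inside $X$, is irreducible as a complex-analytic subvariety. Once that geometric input is in hand, the rest is a routine application of Baire category and bookkeeping with the double-coset structure of $Sh_\textbf{G}(K)$.
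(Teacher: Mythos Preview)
Your argument is correct and follows the same Baire-category skeleton as the paper: cover $Y^0$ by the countably many intersections $Y^0 \cap \gamma Y^0$ with $\gamma$ ranging over $\textbf{G}(\mathbb{Q}) \cap g_2 K g_1^{-1}$, and use Baire to find one with nonempty interior in $Y^0$, hence equal to $Y^0$.

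The only genuine difference is the geometric input used to pass from ``nonempty interior'' to ``equality of the two translates''. You invoke the complex-analytic structure: $Y^0$ and $\gamma^{-1}Y^0$ are equal-dimensional closed complex submanifolds of $X$, so their intersection is a closed analytic subset of the connected manifold $Y^0$, and a proper closed analytic subset of an irreducible analytic space is nowhere dense. The paper instead uses the Riemannian structure: once the intersection contains an open set, the tangent spaces $T_yY^0$ and $T_y(gY^0)$ coincide at an interior point $y$, and two \emph{totally geodesic} submanifolds of $X$ sharing a point and a tangent space must coincide globally. Your route avoids appealing to the totally geodesic property (which is specific to Hermitian symmetric domains and their sub-domains) and uses only that the embedding $Y \hookrightarrow X$ is holomorphic; the paper's route is more in the spirit of the ambient symmetric-space geometry. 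Both are short and valid.
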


\begin{proof}

   It is obviously surjective, so we only need to establish injectivity.

   Take $g_1,g_2 \in \textbf{G}(\mathbb{A}_f)$ with $Z_K(g_1)=Z_K(g_2)$. Then, for any $y \in Y^0$, there exists $g \in \textbf{G}(\mathbb{Q})\bigcap g_1 K g_2 ^{-1}$ such that $y \in gY^0$, therefore $Y^0=\displaystyle \bigcup _{g \in \textbf{G}(\mathbb{Q})\bigcap g_1 K g_2 ^{-1}} Y^0 \bigcap gY^0$. Notice that $Y^0$ and its translation $g(Y^0)$ are  closed submanifolds of $X$, each intersection $Y^0 \bigcap g(Y^0)$ is closed in $Y^0$ and this is a countable union, so by the Baire Category theorem, there exists some $g$ in $\textbf{G}(\mathbb{Q})\bigcap g_1 K g_2 ^{-1}$ such that  $Y^0 \bigcap gY^0$ contains a non-empty open subset $U$ of $Y^0$. Take a point $y \in U$, we get equalities among tangent spaces $T_{y}Y^0=T_{y}U=T_{y}(gY^0)$ by dimension reason. Because both $Y^0$ and $gY^0$ are totally geodesic submanifolds, this implies $gY^0=Y^0$. Therefore $g \in Stab_{\textbf{G}(\mathbb{Q})}(Y^0)$.
\end{proof}

Now we can introduce the main object of this paper, the free $R$-module generated by $Z_K(\textbf{G},\textbf{H})$, $R[Z_K(\textbf{G},\textbf{H})]$, where $R$ is a suitable coefficient ring (such as $\mathbb{Z}$ or $\mathbb{Z}_p$). It will be a Galois-Hecke module. For the Hecke action, we can use the geometric Hecke action on cycles, or equivalently (see \cite{cornut2018} section 5.16), think of the global Hecke algebra as $End_{\textbf{G}(\mathbb{A}_f)}(R[G(\mathbb{A}_f)/K])$, which then has a right action on this module formally (via the parametrization). We will use this second description for the Hecke action. For the Galois action, we will use the reciprocity law for the set of geometric connected components. We have another description of $Z_K(g)$ as follow:

Consider $K_1=gKg^{-1} \bigcap \textbf{H}(\mathbb{A}_f) $, we have natural maps $Sh_\textbf{H}(K_1)\longrightarrow Sh_\textbf{G}(g K g^{-1})\xrightarrow {[*g]} Sh_\textbf{G}(K)$. These maps are defined over the reflex field E for $Sh_\textbf{H}$ and $Z_K(g)$ is the image of the connected component $[Y^0 \times K_1]$ of $Sh_\textbf{H}(K_1)_{\mathbb{\overline{Q}}}$ in $Sh_\textbf{G}(K)_{\overline{\mathbb{Q}}}$. The Galois group $Gal_E$ will act on $\pi_0(Sh_\textbf{H}(K_1))$ by Deligne's reciprocity law (see \cite{Deligne1971}), then it acts on the set of special cycles $Z_K(\textbf{G},\textbf{H})$.

Now let's come to the main example. We construct this pair briefly and refer to Cornut's paper (see \cite{cornut2018}) for more details (computation of stabilizer, reflex fields and the reciprocity law).

Take a totally real number field $F$ and fix a real place $f \in \textbf{Sp}=\spec(F)(\overline{\mathbb{Q}})=\spec(F)(\mathbb{R})=\spec(F)(\mathbb{C})$, here $\overline{\mathbb{Q}}$ is the algebraic closure of $\mathbb{Q}$ in $\mathbb{C}$. Fix a positive integer $n$ and let $(V,\phi )$ denote a quadratic $F$-vector space of dimension $2n+1$. Define $(V_a,\phi_a)=(V,\phi)\otimes _{F,a} \mathbb{R}$ ($a \in \textbf{Sp}$). We require the following signature condition:

\[sign(V_a,\phi_a)=\begin{dcases}
(2n-1,2), & \textrm{ if } a=f,\\
(2n+1,0), & \textrm{ if } a \neq f.
\end{dcases}
\]

 Define $\underline{\textbf{G}}=SO(V,\phi)$, it is a reductive group over $F$. Define $\textbf{G}=\mathrm{Res}_{F/\mathbb{Q}} \underline{\textbf{G}}$, it is a reductive group over $\mathbb{Q}$. The signature condition determines $\displaystyle  \textbf{G}_{\mathbb{R}}=\prod_a \textbf{G}_a$ with $\textbf{G}_a = SO(V_a,\phi_a)$, so that

 \[SO(V_a,\phi_a)=\begin{dcases}
 SO(2n-1,2) (\textrm{non-compact}), & \textrm{ if } a=f,\\
 SO(2n+1,0)  (\text{compact}), & \textrm{ if } a \neq f.
\end{dcases}
\]

Now we define the associated Hermitian symmetric domain. Let $X$ denote the space of oriented negative $\mathbb{R}$-planes in $(V_f,\phi_f)$. The action of $\textbf{G}_{f,\mathbb{R}}(\mathbb{R})=SO(2n-1,2)$ on $(V_f, \phi_f)$ induces a transitive action of $\textbf{G}(\mathbb{R})$ on $X$, and the stabilizer of any point is a maximal compact and connected subgroup. We can view $X$ as  a $\textbf{G}(\mathbb{R})$-conjugacy class of maps $h:\mathrm{Res}_{\mathbb{C}/\mathbb{R}}(\mathbb{R}) (\mathbb{G}_m)\longrightarrow \textbf{G}(\mathbb{R})$. In this way we get a Shimura variety $Sh_\textbf{G}(K)= \textbf{G}(Q)\backslash  X \times \textbf{G}(\mathbb{A}_f) / K$ (for $K$ small enough) (see \cite{cornut2018}).

Now we define the unitary subgroup. Let $E$ denote a totally imaginary quadratic extension of $F$ which splits $(V,\phi)$. Then $(V,\phi)$ contains  $E$-hermitian $F$-hyperplanes (see \cite{cornut2018} 9.2). We choose one such $E$-hermitian space $(W,\psi)$ and define $\underline{\textbf{H}}= U(W, \psi)$ (a subgroup of $\underline{\textbf{G}}$), $\textbf{H}=\mathrm{Res}_{F/\mathbb{Q}}\underline{\textbf{H}}$ (a subgroup of $\textbf{G}$). We can compute its signature over $\mathbb{R}$. Define $(W_a,\psi_a)=(W,\psi)\otimes _{F,a} \mathbb{R}$ ($a \in \textbf{Sp}$). Then $\displaystyle \textbf{H}_\mathbb{R}= \prod _a \textbf{H}_a$ with $\textbf{H}_a=U(W_a,\psi_a)$, these factors are as follow:

\[U(W_a,\psi_a)=\begin{dcases}
U(n-1,1), & \textrm{ if } a=f,\\
U(2n+1), & \textrm{ if } a \neq f.
\end{dcases}
\]

The complex embedding $f:F\longrightarrow \mathbb{C}$  has two extensions to $E$, and we fix one choice $f^{+}: E\longrightarrow \mathbb{C}$ and denote by  $f^-$  its conjugation. Identify $E_{f}=E \otimes _{F,f} \mathbb{R}$ with $\mathbb{C}$. Let $Y$ denote the space of negative $\mathbb{C}$-lines (via this identification) in $(W_f, \psi_f)$. Similarly, we obtain a Shimura varieties $Sh_\textbf{H}(K_1)=\textbf{H}(\mathbb{Q}) \backslash Y \times \textbf{H}(\mathbb{A}_f) / K_1$. The natural inclusion $\textbf{H}\longrightarrow \textbf{G}$  will induce a map between their Shimura varieties $Sh_\textbf{H}(K_1)\longrightarrow Sh_\textbf{G}(K)$ (if $K_1 \subset K$).

 Compared with the general setting above, this pair has some advantages. For example, $Y^0=Y$ ($Y$ is already connected), and the stabilizer in the parametrization of Lemma $2.1$ is exactly $\textbf{H}(\mathbb{Q})$.  Notice that the reflex field for $Sh_\textbf{G}$ is $f(F)$ and the reflex field for $Sh_\textbf{H}$ is $f^+ (E)$, so we identify the abstract number field $F$ and $E$ with their embedded reflex fields $f(F)$ and $f^+ (E)$. We refer to \cite{cornut2018} section 5 for proof of these facts. Below we briefly mention their conjugacy classes of Shimura cocharacters for later use.

Consider the torus $\textbf{T}=\mathrm{Res}_{E/\mathbb{Q}}\mathbb{G}_m=\mathrm{Res}_{F/\mathbb{Q}}\underline{\textbf{T}}$, where $\underline{\textbf{T}}=\mathrm{Res}_{E/F}\mathbb{G}_m$. It has a sub-torus $\textbf{T}^1=\ker(\textbf{T} \xrightarrow{Norm} \mathrm{Res}_{F/\mathbb{Q}}\mathbb{G}_m)=\mathrm{Res}_{F/\mathbb{Q}}\underline{\textbf{T}^1}$, where $\underline{\textbf{T}^1}=\ker(\underline{\textbf{T}} \xrightarrow{Norm} \mathbb{G}_{m,F})$. Through the natural determinant map,  $\det:\textbf{H}\longrightarrow \textbf{T}^1$, we can identify $\textbf{T}^1$ with the maximal abelian quotient $\textbf{H}^{ab}$. And $\det$ will induce a map between Shimura data $Sh(\textbf{H})\longrightarrow Sh(\textbf{T}^1)$. This quotient map $\det$ has non-canonical sections $\textbf{T}^1 \hookrightarrow \textbf{H}$, thus for any $\mathbb{Q}$-algebra $R$, the map $\det:\textbf{H}(R)\longrightarrow \textbf{T}^1(R)$ is surjective.

Let $\beta=(v_1,...,v_n)$ denote an orthogonal $E$-basis for $(W,\psi)$. Then

\[\displaystyle T(\beta)\overset{def}{=}\mathrm{Res}_{F/\mathbb{Q}}(U(Ev_1)\times...\times U(Ev_n))\subset \textbf{H} \subset \textbf{G} \]

 is a maximal $\mathbb{Q}$-subtorus of both $\textbf{H}$ and $\textbf{G}$, and $T(\beta)=\mathrm{Res}_{F/\mathbb{Q}}U(Ev_1)\times...\times \mathrm{Res}_{F/\mathbb{Q}}U(Ev_n) \cong (\textbf{T}^1)^n$.

 We can describe its cocharacter group over $\mathbb{C}$ as follow:

 \[X_*(T(\beta)_{\mathbb{C}}) \cong \{f: \spec(E)(\mathbb{C})\longrightarrow \mathbb{Z}^n|f(c)+f(\overline{c})=0\},\]  where $\overline{c}$ is the conjugation of $c$.

 Define a cocharacter $\mu$ for $T(\beta)_{\mathbb{C}}$, its corresponding function $f_{\mu}$ is as follow:

 \[c \mapsto \begin{dcases}
 (+1,0,...,0) & \textrm{ if } c=f^+,\\
 (-1,0,...,0) & \textrm{ if } c=f^{-}, \\
 (0,...,0)   & \textrm{otherwise.}
 \end{dcases}
 \]

 Then through the inclusion $T(\beta)\subset \textbf{H}$, we obtain a conjugacy class of cocharacter for $\textbf{H}_\mathbb{C}$, $[\mu_\textbf{H}]$. The conjugacy class of Shimura cocharacter for $Sh(\textbf{H})$  is exactly $[\mu_\textbf{H}]$. Similarly, we get the conjugacy class of Shimura cocharacter for $Sh(\textbf{G})$, $[\mu_\textbf{G}]$.

Now we explicit  the reciprocity law for $\pi_0(Sh_\textbf{H})$. It is simpler than the general case because the derived subgroup $\textbf{H}^{der}$ is simply connected,  see \cite{milne} $5.17$. By the strong approximation theorem, $\textbf{H}^{der}(\mathbb{Q})$ is dense in $\textbf{H}^{der}(\mathbb{A}_f)$. Because $E$ is a CM field,  $\textbf{T}^1(\mathbb{Q})$ is discrete (thus closed) in $\textbf{T}^1(\mathbb{A}_f)$. Combining this, we obtain $\textbf{H}(\mathbb{Q})\textbf{H}^{der}(\mathbb{A}_f)\subset \overline{\textbf{H}(\mathbb{Q})} \subset (\det^{-1})(\textbf{T}^1(\mathbb{Q}))=\textbf{H}(\mathbb{Q}) \textbf{H}^{der}(\mathbb{A}_f)$ inside $\textbf{H}(\mathbb{A}_f)$,   so $\overline{\textbf{H}(\mathbb{Q})}=\textbf{H}(\mathbb{Q})\textbf{H}^{der}(\mathbb{A}_f)$. For a small enough level group $K_1\subset \textbf{H}(\mathbb{A}_f)$, the quotient map $\det:\textbf{H}\longrightarrow \textbf{T}^1$ induces a  natural map for Shimura varieties, $Sh_{\textbf{H}}(K_1)\longrightarrow Sh_{\textbf{T}^1}(\det(K_1))$. This natural map induces an isomorphism on the set of connected components, $\pi_0(Sh_{\textbf{H}}(K_1))=\textbf{H}(\mathbb{Q}) \backslash \textbf{H}(\mathbb{A}_f)/K_1= \textbf{T}^1(\mathbb{Q})\backslash \textbf{T}^1(\mathbb{A}_f)/\det(K_1) =\pi_0(Sh_{\textbf{T}^1}(\det(K_1)))$. For a small enough level group $K\subset \textbf{G}(\mathbb{A}_f)$, similarly we have $Z_K(\textbf{G},\textbf{H})=\textbf{H}(\mathbb{Q})\textbf{H}^{der}(\mathbb{A}_f)\backslash \textbf{G}(\mathbb{A}_f)/K$. The natural map $\textbf{H}^{der}(\mathbb{A}_f)\backslash \textbf{G}(\mathbb{A}_f)/K \longrightarrow \textbf{H}(\mathbb{Q})\textbf{H}^{der}(\mathbb{A}_f)\backslash \textbf{G}(\mathbb{A}_f)/K$ induces a natural map \[{\bigotimes_{w}}^{'} \mathbb{Z}[\underline{\textbf{H}}^{der}(F_w)\backslash \underline{\textbf{G}}(F_w)/K_w] \longrightarrow \mathbb{Z}[Z_K(\textbf{G},\textbf{H})],\]  here $w$ run over primes for $F$, $K_w$ is corresponding component of $K$ ($\displaystyle K_{p}=\prod_{\mathfrak{p}|p}K_{\mathfrak{p}}$) and we use restricted tensor product. Thus we can think of $\mathbb{Z}[\underline{\textbf{H}}^{der}(F_v)\backslash \underline{\textbf{G}}(F_v)/K_v]$ as a local version of our module for special cycles.

 Recall that the reflex norm for the Shimura torus $Sh(\textbf{T}^1)$  is given by $r=res(\underline{r})$, where $\underline{r}$ $:\underline{\textbf{T}}\longrightarrow \underline{\textbf{T}^1}$, $z\mapsto \frac{z}{\overline{z}}$ (see \cite{cornut2018} section 5.8). The standard Artin map $\text{Art}_E: \textbf{T}(\mathbb{A}_{f})\longrightarrow Gal_E ^{ab}$, which sends a local uniformizer to a \textbf{geometric Frobenius}, induces an isomorphism $\text{Art}_E ^1: \textbf{T}^1(\mathbb{A}_f)/\textbf{T}^1(\mathbb{Q})\cong Gal (E[\infty]/E)$. Here $E[\infty]$ is the union of all ring class fields of $E$. The Galois group $Gal_E$ acts on $Z_K(\textbf{G},\textbf{H})$ as follow:

 \[ \rho(Z_K(g))=Z_K(sg),\, \text{if}\, \text{Art}_E^1(\det(s))=\rho|_{E[\infty]}.\]

We also need some preparations of ring class fields to realize abstract relations. For detail explanations we refer to  \cite{cornut2018} section 7 and \cite{reda2019} section VI, VII.

Fix the level group $K$, choose a special cycle $z=Z_K(g)$ and  take a finite set $S_1$ ("bad primes") of primes of $\mathbb{Q}$ large enough such that

$\bullet$ $S_1$ contains $\{2\}$;

$\bullet$ For any prime $q \notin S_1$, $\mathbb{Q}\longrightarrow E$, $\textbf{G}$ and $\textbf{H}$ are all unramified at $q$.

$\bullet$ $K=K_{S_1}\times K^{S_1}$ with $K^{S_1}=\displaystyle \prod_{q \notin S_1} K_q$, $g_q \in K_q$.

$\bullet$ For any $q \notin S_1$, $K_q$ (resp $K_q \bigcap \textbf{H}(\mathbb{Q}_q)$) is hyperspecial in $\textbf{G}(\mathbb{Q}_q)$ (resp $\textbf{H}(\mathbb{Q}_q)$).

Take a  prime $l \notin S_1 $ (we will consider $l$-adic etale cohomology in \ref{coh}) and enlarge $S_1$ into $S_1 \bigcup\{l\}$. Let $S$ ("bad places") denote the finite set of places of $F$ above $S_1$. Let $\underline{\textbf{P}}$ denote the set of primes of  $\bO_F$  that don't belong to $S$ and split in $F\longrightarrow E$,   $\underline{\textbf{N}}$ denote the set of  square-free products of elements of $\underline{\textbf{P}}$. For any ideal $m$ of $\bO_F$ belonging to $\underline{\textbf{N}}$, we will associate  a finite abelian extension $\underline{E}[m]$ that lies in $E \longrightarrow E[\infty]$.

 Choose a compact open subgroup $U_S^1\subset \textbf{T}^1(\mathbb{A}_{f,S_1})$, where $\displaystyle \mathbb{A}_{f,S_1}=\prod_{t \in S_1}\mathbb{Q}_t$. For other primes of $F$, $v \notin S$, we define a filtration (by local conductor),  $U_v^1(c)=\{\frac{a}{\overline{a}}, a \in (\bO_{F,v}+m_{F_v}^c \bO_{E,v})^*\}$, where c is a non-negative integer. Because $\textbf{T}^1(\mathbb{Q})$ is discrete in $\textbf{T}^1(\mathbb{A}_f)$, we can require $U_S^1 $ to be small enough such that $\displaystyle \textbf{T}^1(\mathbb{Q}) \bigcap (U_S^1 \times \prod_{v \notin S} U_v ^1(0))=1$. Moreover we can assume $\displaystyle U_S ^1 \times \prod_{v \notin S} U_v ^1(0) \subset \det(gKg^{-1}\bigcap \textbf{H}(\mathbb{A}_f))$ due to our assumptions on $S$.

 Now for an ideal $m$ as above, we define $U^1(m)=U_S ^1 \times \prod_{v \notin S}U_v ^1(v(m))$. We define $\underline{E}[m]$ to be the field fixed by $Art_E^1(U^1(m))$. Take $m \in \underline{\textbf{N}}$, with prime factor $h$, through direct computation, we have the following lemma:

\begin{lem}{\textbf{local-global}}
\label{locglo}

$\frac{U_h^1(0)}{U_h^1(1)}$  is isomorphic to $Gal(\underline{E}[m]/\underline{E}[\frac{m}{h}])$ via the Artin map.

\end{lem}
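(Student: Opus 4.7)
The plan is essentially bookkeeping using the chosen smallness of $U_S^1$ together with the fact that $\text{Art}_E^1$ is an isomorphism, so there is no deep content—only careful tracking of which local components change.

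First, I would compare $U^1(m)$ and $U^1(m/h)$ factor by factor. Since $m \in \underline{\textbf{N}}$ is square-free and $h$ is a prime factor of $m$, the $v$-adic valuations satisfy $v(m) = v(m/h)$ for every place $v \neq h$ of $F$, while $h(m) = 1$ and $h(m/h) = 0$. Thus the two compact open subgroups agree componentwise outside $h$, and at $h$ we have the inclusion $U_h^1(1) \subset U_h^1(0)$ (because $\bO_{F_h} + m_{F_h}^c \bO_{E_h}$ decreases as $c$ grows). Consequently $U^1(m) \subset U^1(m/h)$, with quotient canonically isomorphic to $U_h^1(0)/U_h^1(1)$ via projection to the $h$-component.

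Second, I would translate this through the reciprocity isomorphism $\text{Art}_E^1 \colon \textbf{T}^1(\mathbb{A}_f)/\textbf{T}^1(\mathbb{Q}) \xrightarrow{\sim} \text{Gal}(E[\infty]/E)$. By the definition of $\underline{E}[m]$ as the field fixed by $\text{Art}_E^1(U^1(m))$, we have
\[
\text{Gal}(E[\infty]/\underline{E}[m]) \;=\; \text{Art}_E^1(U^1(m)), \qquad \text{Gal}(E[\infty]/\underline{E}[m/h]) \;=\; \text{Art}_E^1(U^1(m/h)),
\]
and taking the quotient yields
\[
\text{Gal}(\underline{E}[m]/\underline{E}[m/h]) \;\cong\; \text{Art}_E^1(U^1(m/h)) \big/ \text{Art}_E^1(U^1(m)).
\]

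Third, to upgrade this to an isomorphism with $U^1(m/h)/U^1(m)$ itself (and hence with $U_h^1(0)/U_h^1(1)$), I need the composition $U^1(m/h) \hookrightarrow \textbf{T}^1(\mathbb{A}_f) \twoheadrightarrow \textbf{T}^1(\mathbb{A}_f)/\textbf{T}^1(\mathbb{Q})$ to be injective. This is exactly where the smallness hypothesis on $U_S^1$ enters: by assumption $\textbf{T}^1(\mathbb{Q}) \cap (U_S^1 \times \prod_{v \notin S} U_v^1(0)) = 1$, and since every $U_v^1(c) \subset U_v^1(0)$ for $c \geq 0$ we have $U^1(m/h) \subset U_S^1 \times \prod_{v \notin S} U_v^1(0)$. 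Combining these observations gives the desired isomorphism
\[
U_h^1(0)/U_h^1(1) \;\cong\; U^1(m/h)/U^1(m) \;\cong\; \text{Gal}(\underline{E}[m]/\underline{E}[m/h]).
\]

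There is no serious obstacle; the only care needed is to verify that $U_v^1(c)$ is genuinely decreasing in $c$ and that $U^1(m/h)$ lies in the domain where the Artin map restricts injectively, both of which are immediate from the definitions and hypotheses stated in the excerpt.
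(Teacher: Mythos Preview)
Your proposal is correct and is precisely the ``direct computation'' the paper alludes to without spelling out: compare $U^1(m)$ and $U^1(m/h)$ componentwise, then push through the isomorphism $\text{Art}_E^1$, using the smallness condition $\textbf{T}^1(\mathbb{Q}) \cap (U_S^1 \times \prod_{v \notin S} U_v^1(0)) = 1$ to ensure no collapse. There is nothing to add.
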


Although this lemma is simple, it is essential to translate the global distribution relation problem into local problems. By our assumption on $U^1(1)$, the cycle $z=Z_K(g)$ lies in the field $\underline{E}[1]$. In  section \ref{tamereal}, we will extend it into a family of special cycles defined over some $\underline{E}[m]$ with suitable tame relation.

\subsection{Local setting (Boumasmoud's Relation)}
\label{locset}

Let $F$ denote a $p$-adic field with ring of integers $\bO_F$, a uniformizer $\pi  \in  \bO_F$ and residue field $\bO_F / \pi = \mathbb{F}_q$ with $q$ elements.

 Let $\textbf{G}$ denote  a reductive group scheme over $\bO_F$. By \cite{sga3} (XXVI 7.15) and Lang's theorem \cite{lang}, $\textbf{G}$  is quasi-split over $\bO_F$: there is a maximal torus $\textbf{T}$ of $\textbf{G}$ contained in a Borel subgroup $\textbf{B}$ with unipotent radical $\textbf{N}$ and Levi decomposition $\textbf{B}=\textbf{T} \ltimes \textbf{N}$. Let $\textbf{S}$ denote the maximal split subtorus of $\textbf{T}$, $X_*(\textbf{S})$ the group of cocharacters of $\textbf{S}$ and $X^+_*(\textbf{S})\subset X_*(\textbf{S})$ the cone of $\textbf{B}$-dominant cocharacters. Let $K$ denote the hyperspecial subgroup $\textbf{G}(\bO_F)$, $B$ denote $\textbf{B}(F)$, $G$ denote $\textbf{G}(F)$; we have the Iwasawa decomposition $G=BK$.

Consider the   $G$ module $\mathbb{Z}[G/K]$, where $G$ acts on $G/K$ via left multiplication. We get the Hecke algebra $He \overset{def}{=} (End_{G} \mathbb{Z}[G/K])^{opp}= \mathbb{Z}[ K \backslash G/K] $. It has right actions on $\mathbb{Z}[G/K]$. More precisely, for a double coset $KgK=\coprod_{i} g_iK$, this operator $[KgK]$ will send $1_{bK}$ into $\sum_i 1_{bg_iK}$.

For a cocharacter $\mu \in X_* ^{+}(\textbf{S})$, there is an associated $U$-operator $U_{\mu} \in End_{B}(\mathbb{Z}[G/K])$. Let $I \subset K$ denote the Iwahori subgroup determined by $K$ and $B$, with positive part $I^+=\textbf{N}(\bO_F)$.

  By the Iwasawa decomposition, $\mathbb{Z}[G/K]$  is generated by elements of the form $1_{bK}$ ($b \in B$), $U_{\mu}$ acts on these elements as follow:

\begin{equation}
\displaystyle U_{\mu}(1_{bK})=\sum_{z \in \frac{I^+}{\mu(\pi)I^+ \mu(\pi)^{-1}}} 1_{bz\mu(\pi)K}
\label{reda1}
\end{equation}

More generally, fix an algebraic closure $\overline{F}$ of $F$ and let $F^{un}$ denote the maximal unramified extension of $F$ inside $\overline{F}$. For a conjugate class of cocharacters $[\mu]$ for $\textbf{G}_{\overline{F}}$, we  also have an associated $U$-operator. Let $\mu \in X_*(\textbf{T}_{\overline{F}})$ denote the unique $\textbf{B}_{\overline{F}}$-dominant cocharacter of $\textbf{T}_{\overline{F}}$ in this conjugacy class. Both $[\mu]$ and $\mu$ have the same field of definition, a finite extension $F(\mu)\subset \overline{F}$ of $F$. In fact, $\textbf{G}_{F}$ is an unramified reductive group over $F$ by \cite{sga3} XXVI 7.15, it splits over $F^{un}$, thus $F(\mu)\subset F^{un}$. Let $n(\mu)=[F(\mu):F]$ be the degree of this extension. We still denote the descended cocharacter  $\mathbb{G}_{m,F(\mu)}\longrightarrow \textbf{T}_{F(\mu)}$ by $\mu$. Consider its norm, $\mu_0=Norm_{F(\mu)/F} (\mu)$, it is a cocharacter for $\textbf{T}_{F}$. Therefore it factors through the maximal split torus $\textbf{S}_F$, we still denote this cocharacter $\mathbb{G}_{m,F}\longrightarrow \textbf{S}_{F}$ by $\mu_0$. There is a unique cocharacter for $\textbf{S}$ extending $\mu_0$. And it lies in $X^+ _*(\textbf{S})$. Therefore we can attach to this cocharacter a $U$-operator as above. We still denote the resulting operator by $U_{\mu}$.

Here we make three remarks:

$\bullet$ In this section we start with a reductive group scheme $\textbf{G}$ over the integer ring $\bO_F$.  This amounts to give  an unramified reductive group $G$ over $F$ and a hyperspecial subgroup $K$ of $G(F)$.

$ \bullet $ For simplicity, we pin down everything ($K,B,\mu...$). In fact, this $U$-operator has a more intrinsic and geometric explanation via Bruhat-Tits building theory. We refer to \cite{reda2019} (section III and section V) for more details.

$\bullet $ For any non-negative integer $i$, we have $U_{\mu^{i}}=(U_{\mu})^{i}$.

For the above conjugacy class of cocharacters $[\mu]$, we can also define a Hecke polynomial $Hep_{\mu}(X) \in He[X]$. There is an issue about the coefficient ring. To realize the Satake transform, we need to enlarge the coefficient ring $\mathbb{Z}$ into a larger ring $R$ such that $\mathbb{Z}[q^{\pm \frac{1}{2}}]\subset R$. We still denote the $R$-coefficient Hecke algebra $R[K \backslash G / K]$ by $He$. By scalar extension, we get a $R[G]$-module $R[G/K]$, and an associated  $U$-operator $U_{\mu}\in End_{B}R[G/K]$.

Let $\Gamma$ denote $Gal(F^{un}/F)$ with $\sigma \in \Gamma$ being the \textbf{geometric Frobenius} of $F$. Let $\rho\in X^*(\textbf{T}_{\overline{F}})\otimes \mathbb{Q}$ be the half-sum of all positive roots of $(\textbf{T}_{\overline{F}},\textbf{B}_{\overline{F}})$.

Let $1\longrightarrow \widehat{G}\longrightarrow {^L G} \longrightarrow \Gamma \longrightarrow 1$ be the Langlands dual of $\textbf{G}_{F}$. Fix a $\Gamma$-invariant pinning $(\widehat{T},\widehat{B},...)$ of $\widehat{G}$ so that $^L G=\widehat{G} \rtimes \Gamma$. We get a $\Gamma$-equivariant isomorphism $X_{*}(\textbf{T}_{\overline{F}}) \cong X^*(\widehat{T})$. It maps $\mu$ to a $\widehat{B}$-dominant character of $\widehat{T}$ fixed by $\Gamma^{n(\mu)}$, which we also denote by $\mu$. Let $r_{\mu}:\widehat{G} \rtimes \Gamma^{n(\mu)}\longrightarrow GL(V_{\mu})$ be the unique irreducible representation of $^L (\textbf{G}_{F(\mu)})$ whose restriction to $\widehat{G}$ is the irreducible representation with highest weight $\mu$, and such that $\Gamma^{n(\mu)}$ acts trivially on the highest weight space. For $\widehat{g}\in \widehat{G}$, consider the polynomial

\begin{displaymath}
\det(X-q^{n(\mu)d(\mu)}  r_{\mu}((\widehat{g} \rtimes \sigma^{-1})^{n(\mu)})),
\end{displaymath}
where $ d(\mu)= \langle \rho,\mu \rangle$. We denote it by $Hep_{\mu}$ and it is our Hecke polynomial. Its coefficients are regular functions on $\widehat{G}$ fixed by $\sigma$-conjugation. Through Satake transform, we can view these functions as elements in the Hecke algebra (with suitable coefficient ring). The Hecke polynomial only depends on the conjugacy class $[\mu]$, not depends on our pinning down data $(\textbf{B},\textbf{T},\mu...)$.

\textbf{Remark:}

Here we will use the usual ("untwisted") Satake transform and we make a remark about the coefficient issue. If $\mu$ is minuscule, Wedhorn  showed that the coefficients of its Hecke polynomial lie in the Hecke algebra with $\mathbb{Z}[q^{-1}]$-coefficient, see  \cite{wed} section 2.8. Because the Shimura cocharacter is minuscule, in the application to tame relations, we can relax the requirement $\mathbb{Z}[q^{\pm \frac{1}{2}}]\subset R$ into $\mathbb{Z}[q^{-1}]\subset R$.

Now we can state Boumasmoud's result.

\begin{thm}{(Seed Relation)}
\label{reda2}

The operator $U_{\mu}$ is a right root of the Hecke polynomial $Hep_{\mu}(X)$ in $End_{B}(R[G/K])$, in other words, in that ring, $Hep_{\mu}(U_{\mu})=0$.

\end{thm}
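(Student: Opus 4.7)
The plan is to realize the identity $Hep_{\mu}(U_{\mu})=0$ as a manifestation of the Cayley--Hamilton theorem applied to the endomorphism $q^{n(\mu)d(\mu)}r_{\mu}((\widehat{g}\rtimes \sigma^{-1})^{n(\mu)})$ of the finite-dimensional representation $V_{\mu}$. The Satake isomorphism, together with Bernstein's presentation of the Iwahori--Hecke algebra, provides the bridge between operators on $R[G/K]$ and representations of the Langlands dual group.

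First I would reduce to the single-vector identity $Hep_{\mu}(U_{\mu})(1_K)=0$. The Iwasawa decomposition $G=BK$ shows that $R[G/K]$ is generated, as a left $B$-module, by $1_K$. Since $U_{\mu}$ is by construction left $B$-equivariant and the Hecke operators are even left $G$-equivariant (hence $B$-equivariant), the two commute with the full left $B$-action and it suffices to verify the identity on the single element $1_K$.

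The heart of the argument is the identification of $U_{\mu}$ with a distinguished element of the Iwahori--Hecke algebra $H_I=R[I\backslash G/I]$. Embedding $R[G/K]\hookrightarrow R[G/I]$, the operator $U_{\mu}$ becomes the restriction of a $B$-equivariant operator on $R[G/I]$ which, through Bernstein's presentation, coincides up to an explicit power of $q$ with the image of the lattice element $\Theta_{\mu}\in R[X_*(\textbf{S})]\subset H_I$; this is visible directly from formula (\ref{reda1}). Under the unramified Satake isomorphism $He\otimes R\cong R[X_*(\textbf{S})]^{W}$, the coefficients of $Hep_{\mu}$, built from $q^{n(\mu)d(\mu)}$ and the character of the highest weight representation $V_{\mu}$, correspond exactly to the elementary symmetric functions in the weights of $r_{\mu}((\widehat{g}\rtimes \sigma^{-1})^{n(\mu)})$. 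Consequently, $Hep_{\mu}(U_{\mu})$ acts on $1_K$ through the evaluation, in $\mathrm{End}(V_{\mu})$, of the characteristic polynomial of $q^{n(\mu)d(\mu)}r_{\mu}((\widehat{g}\rtimes \sigma^{-1})^{n(\mu)})$ at this same endomorphism, which vanishes by Cayley--Hamilton.

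The main obstacle is the Satake/Bernstein dictionary in the previous paragraph: one must carefully match the combinatorial formula (\ref{reda1}) for $U_{\mu}$ with Bernstein's lattice element, keep track of the $q^{d(\mu)}$-normalization when passing from $R[G/I]$ down to $R[G/K]$, and verify in the general unramified (non-split) setting that the twist by $\Gamma^{n(\mu)}$ is correctly absorbed via the norm $\mu_0=\mathrm{Norm}_{F(\mu)/F}(\mu)$ and the identity $U_{\mu}^{n(\mu)}=U_{\mu_0}$. Once this bookkeeping is in place, the deduction from Cayley--Hamilton is formal.
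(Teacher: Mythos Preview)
The paper does not supply its own proof of this theorem: immediately after the statement it writes ``We refer to Boumasmoud's thesis (see \cite{reda2019} section IV) for its proof.'' So there is nothing in the paper to compare your argument against beyond that citation.

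Your outline is broadly the shape of Boumasmoud's argument: pass to the Iwahori level, identify $U_{\mu}$ (up to normalization) with the Bernstein lattice element $\Theta_{\mu_0}$ in the commutative subalgebra $R[X_*(\textbf{S})]\subset H_I$, and then recognize $Hep_{\mu}(\Theta_{\mu_0})$ as the Satake pullback of a characteristic polynomial evaluated at its own endomorphism, which vanishes by Cayley--Hamilton. Your reduction to the single generator $1_K$ via $B$-equivariance is correct and is also how one typically starts.

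One imprecision worth flagging: you write ``Embedding $R[G/K]\hookrightarrow R[G/I]$'', but the natural map goes the other way (summation over $K/I$-cosets gives a surjection $R[G/I]\twoheadrightarrow R[G/K]$). What one actually uses is the idempotent $e_K\in H_I$ (after inverting $[K:I]$) to realize $R[G/K]$ as a summand, or more directly one works inside $H_I$ and checks that $\Theta_{\mu_0}$ acting on $e_K H_I$ recovers $U_{\mu}$ on $R[G/K]$. This is exactly the ``bookkeeping'' you acknowledge as the main obstacle; Boumasmoud's section IV carries it out carefully, including the $q^{d(\mu)}$-normalization and the Galois-twist in the non-split case. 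Your sketch is therefore an accurate high-level summary rather than a self-contained proof.
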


We refer to Boumasmoud's thesis (see \cite{reda2019} section IV) for its proof.

There are many occurences of the Hecke polynomial in the literature. Different papers may use different notations. To avoid such notation confusion, we conclude this section with some \textbf{remarks}:

$\bullet$ In this paper, the word "Frobenius" will always means geometric Frobenius, while some papers are using arithmetic Frobenius to define Hecke polynomial, like \cite{reda2019}, \cite{cornut2018}.

$\bullet$ We are using the usual Satake transform. In Boumasmoud's thesis, he used the usual Satake transform and a "twisted" version, but he named the latter "untwisted Satake transform", see \cite{reda2019} section III and section IV.

$\bullet$ In the definition of the Hecke polynomial, we consider the representation $V_{\mu}$, but some papers are using $V_{-\mu}$, see \cite{lee2020} remark 2.1.3.

$\bullet$ For such module $R[G/K]$, we always equip it with \textbf{left} $G$-action and \textbf{right} $He$-action, like \cite{reda2019}. Some papers may define the Hecke algebra as $End_{G}(R[G/K])$, while we're using the opposite identification $He=(End_{G}(R[G/K]))^{opp}$. Especially there are some notation confusion when compare our "formal" Hecke action with geometric Hecke action. See \cite{lee2020} remark 6.1.7.

\section{Tame relation}

Cornut has already established the tame relation for inert places (see \cite{cornut2018} section 7,8), which is sufficient for the main arithmetic applications, i.e. the construction of an Euler system. However, here we want to establish tame relations for split places via the seed relation, because this method can be applied to other Shimura varieties (such as $Gsp_4$ lift and so on).

\subsection{Abstract relation}
\label{abstamerel}

In this section, we still work over a local field and use the same notations as in the previous section \ref{locset}.

Recall  $F$ is a $p$-adic field with ring of integers $\bO_F$, a uniformizer $\pi  \in  \bO_F$ and residue field $\bO_F / \pi =\mathbb{F}_q$ with $q$ elements.

Recall $\textbf{G}$ is the reductive group scheme over $\bO_F$. For simplicity in this section we require it to be \textbf{split}. Then the maximal split subtorus  $\textbf{S}$ is the maximal torus $\textbf{T}$. Notice that the natural map $X_*(\textbf{T})\longrightarrow X_*(\textbf{T}_{F})$ is an isomorphism. Consider a cocharacter $\mu \in X_*(\textbf{T}_F)$, we also use $\mu$ to denote its unique extension to integral level. We require $\mu$ to be $\textbf{B}_F$-dominant ($\mu \in X^+_*(\textbf{T})$) and minuscule.

The quotient map $\bO_F^*\longrightarrow \mathbb{F}_q^*$ has a natural section $\mathbb{F}_q^*\longrightarrow \bO_F^*$. Combining it with the cocharacter $\mu$, we obtain a group map $\mathbb{F}_q^*\longrightarrow \bO_F^*\longrightarrow \textbf{T}(\bO_F) \longrightarrow \textbf{G}(\bO_F)$. Because $\mu$ is minuscule, this map is injective. We identify $\mathbb{F}_q^*$ with $\mu(\mathbb{F}_q^*)$ inside $\textbf{G}(\bO_F)$. And we let $\mu(\mathbb{F}_q^*)$ act on  $R[G/K]$  through our left $G$ action and denote elements $1_{bK}$ by $[b]$.

We have the following divisibility lemma:

\begin{lem}

$\displaystyle q-1 | Hep_{\mu}(\mu(\pi))([1])$ in  $R[\mu(\mathbb{F}_q^*)\backslash G /K]$ .

\end{lem}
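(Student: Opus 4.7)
The plan is to deduce the lemma from Boumasmoud's Seed Relation $Hep_{\mu}(U_{\mu})([1]) = 0$ by showing that in the quotient $M := R[\mu(\mathbb{F}_q^*) \backslash G/K]$ one has the congruence
\[ U_{\mu}^{n}([1]) \equiv \mu(\pi)^{n}\cdot [1] \pmod{(q-1)M}, \qquad n \geq 0. \]
Writing $Hep_{\mu}(X) = \sum_{i} c_{i} X^{i}$ with Hecke coefficients $c_{i}$ acting on the right, the right Hecke action commutes with left $G$-translations and so preserves the preimage $(q-1)R[G/K] + \ker(R[G/K] \to M)$ of $(q-1)M$; substituting the congruence termwise into $\sum_{i} c_{i}\cdot U_{\mu}^{i}([1]) = 0$ then yields $Hep_{\mu}(\mu(\pi))([1]) \in (q-1)M$, which is the stated divisibility.

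For the base case $n = 1$, I would expand $U_{\mu}([1]) = \sum_{z}[z\mu(\pi)]$ over $z \in I^{+}/\mu(\pi)I^{+}\mu(\pi)^{-1}$ via Equation \eqref{reda1} and track the left action of $\mu(\mathbb{F}_q^*)$ on these cosets. Since $\mu(\alpha) \in K$ commutes with $\mu(\pi)$ inside $\textbf{T}$, left multiplication by $\mu(\alpha)$ sends $[z\mu(\pi)]$ to $[(\mu(\alpha) z \mu(\alpha)^{-1})\mu(\pi)]$, i.e.\ acts on representatives by conjugation. The minuscule hypothesis on $\mu$ forces $\langle \beta, \mu\rangle \in \{0,\pm 1\}$ for every root $\beta$, whence the standard root-subgroup decomposition yields
\[ I^{+}/\mu(\pi)I^{+}\mu(\pi)^{-1} \;\cong\; \prod_{\beta > 0,\; \langle\beta,\mu\rangle = 1} U_{\beta}(\mathbb{F}_q), \]
on which $\mu(\alpha)$ acts on each factor $U_{\beta}(\mathbb{F}_q) \cong \mathbb{F}_q$ by scalar multiplication by $\alpha$. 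Hence $z = 1$ is the unique fixed coset and every other $\mu(\mathbb{F}_q^*)$-orbit is free of cardinality exactly $q-1$; grouping the sum by orbits gives the base case.

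For the induction I would use that $U_{\mu} \in End_{B}(R[G/K])$ commutes in particular with left multiplication by $\mu(\pi) \in \textbf{T}(F) \subset B$, and that both $U_{\mu}$ and left multiplication by $\mu(\pi)$ preserve $\ker(R[G/K] \to M)$ — the former because $\mu(\mathbb{F}_q^*) \subset B$, the latter because $\mu(\pi)$ and $\mu(\mathbb{F}_q^*)$ commute in $\textbf{T}$. Applying $U_{\mu}$ to the inductive hypothesis $U_{\mu}^{n-1}([1]) - \mu(\pi)^{n-1}[1] \in (q-1)R[G/K] + \ker(R[G/K] \to M)$ and invoking the $n = 1$ case gives the same inclusion at level $n$. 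The main obstacle is the base-case orbit analysis: it is where the minuscule hypothesis plays the crucial role, forcing $\mu(\mathbb{F}_q^*)$ to act by nontrivial scalars on each root factor so that only the identity is fixed and every other orbit has cardinality exactly $q-1$. Everything else is then formal manipulation.
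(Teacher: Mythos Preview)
Your proposal is correct and follows the same overall strategy as the paper: reduce via the Seed Relation to showing that $U_{\mu}^{i}([1]) \equiv \mu(\pi)^{i}[1]$ modulo $(q-1)$ in the quotient, and establish this by analysing the free $\mu(\mathbb{F}_q^*)$-action on nonidentity cosets in $I^{+}/\mu(\pi)I^{+}\mu(\pi)^{-1}$ via the root-group decomposition.

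The only noteworthy difference is in how the higher powers are handled. The paper treats each $i\geq 1$ directly: it identifies $I^{+}/\mu(\pi)^{i}I^{+}\mu(\pi)^{-i}$ with $\prod_{\langle\alpha,\mu\rangle=1}\textbf{N}_{\alpha}(\bO_F/\pi^{i})$ (which requires the observation that the relevant root subgroups commute, since $\mu$ is minuscule) and then runs the same free-orbit argument at level $i$. You instead do only the case $i=1$ explicitly and induct using the $B$-equivariance of $U_{\mu}$ together with the fact that $\mu(\pi)$ commutes with $\mu(\mathbb{F}_q^*)$. Your route is marginally slicker in that it sidesteps the commutativity check for the subgroup $\textbf{A}=\prod_{\langle\alpha,\mu\rangle=1}\textbf{N}_{\alpha}$; the paper's route has the advantage of giving a uniform, self-contained description of $U_{\mu^{i}}([1])$ for every $i$ that is reused elsewhere. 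Both are valid and the underlying idea is the same.
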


\begin{proof}

There is a natural $\textbf{T}(F)$-Hecke equivariant map $pr: R[G/K] \longrightarrow R[\mu(\mathbb{F}_q^*)\backslash G/K]$. Recall that we have the seed relation (theorem \ref{reda2}), $Hep_{\mu}(U_{\mu})=0$ in $End_{B}R[G/K]$. To prove our lemma, we need to show:
\[ q-1|pr(Hep_{\mu}(U_{\mu})([1])-Hep_{\mu}(\mu(\pi))([1])).\]
Suppose the Hecke polynomial $Hep_{\mu}$ equals $\sum_i A_i X^{i}$ ($A_i \in He$) and recall that  $U_{\mu}^{i}=U_{\mu^{i}}$. It is enough to show the following:

For each positive integer $i$, we have \[q-1|pr((U_{\mu^{i}}-\mu^{i}(\pi))([1])).\]
Recall our formula for $U_{\mu^{i}}$ operator (formula (\ref{reda1})), we have:
\[U_{\mu^{i}}([1])=\sum_{z \in \frac{I^{+}}{\mu(\pi)^{i}I^+ \mu(\pi)^{-i}}}[z \mu(\pi)^{i}].\]
Denote this indexing set by $In$ and we need to analyze it. Recall $I^+=\textbf{N}(\bO_F)$. Over $\spec(\bO_F)$, for each positive root $\alpha$, there is an associated root subgroup $\textbf{N}_{\alpha}\subset \textbf{N}$ and a $\textbf{T}$-equivariant isomorphism $R_{\alpha}: \mathbb{G}_a \longrightarrow \textbf{N}_{\alpha} $, here $\textbf{T}$ acts on $\mathbb{G}_a$ through the cocharacter $\alpha:\textbf{T} \longrightarrow \mathbb{G}_m$ and $\textbf{T}$ acts on $\textbf{N}_{\alpha}$ through conjugation. Let $\Phi^{+}$ denote the set of positive roots and consider the  $\textbf{T}$-equivariant product map:
\[ \mathbb{G}_a^{n}= \prod_{\alpha \in \Phi^{+}}\mathbb{G}_a \cong \prod_{\alpha \in \Phi^{+}} \textbf{N}_{\alpha}\longrightarrow \textbf{N}.\]
This map is not compatible with the group structures, but it is an isomorphism of schemes for any ordering on the factors.

Divide these factors into two  part, define $\displaystyle \textbf{A}=\prod_{(\alpha,\mu)=1}\textbf{N}_{\alpha}$, $\displaystyle \textbf{B}=\prod_{(\beta,\mu)=0}\textbf{N}_{\beta}$. The above isomorphism induces an isomorphism $\textbf{A}(\bO_F) \times \textbf{B}(\bO_F) \cong \textbf{N}(\bO_F)$.

Notice that for positive root $\alpha$ and $\gamma$ with $(\alpha,\mu)=1=(\gamma,\mu)$, there is no positive root $\xi$ in the form of $a\alpha+b\gamma$ ($a$ and $b$ are positive integers). Therefore, $\textbf{A}$ is a commutative subgroup of $\textbf{N}$. For any positive integer $i$, we have a natural isomorphism $\textbf{A}(\bO_F/\pi^{i})\cong \frac{\textbf{A}(\bO_F)}{\mu(\pi)^{i}\textbf{A}(\bO_F)\mu(\pi)^{-i}}$. The inclusion $\textbf{A}(\bO_F)\hookrightarrow \textbf{N}(\bO_F)$ also induces a natural isomorphism $\frac{\textbf{A}(\bO_F)}{\mu(\pi)^{i}\textbf{A}(\bO_F)\mu(\pi)^{-i}} \cong \frac{\textbf{N}(\bO_F)}{\mu(\pi)^{i}\textbf{N}(\bO_F)\mu(\pi)^{-i}}$.

In summary, for the indexing set, we have found a $\textbf{T}(\bO_F)$-equivariant isomorphism

\[\prod_{(\alpha,\mu)=1}\textbf{N}_{\alpha}(\bO_F/\pi^{i}) \cong In.\]

Define $In^*$ to be $In-\{1\}$. Then $\displaystyle U_{\mu^{i}}([1])-\mu(\pi)^{i}([1])=\sum_{z\in In^*}[z\mu(\pi)^{i}]$.

For any $x \in In^*$, write $\displaystyle x=\prod_{(\alpha,\mu)=0} \textbf{R}_{\alpha}(x_{\alpha})$. For any $t\in \mathbb{F}_q^*$, $\displaystyle \mu(t)x\mu(t)^{-1}=\prod_{(\alpha,\mu)=0} \textbf{R}_{\alpha}(tx_{\alpha})$. There is at least one $\alpha$ such that $x_{\alpha}$ is non-zero, so the group  $\mu(\mathbb{F}_q^*)$ acts on $In^*$ freely. Notice that in $\mu(\mathbb{F}_q^*) \backslash G/K$ we have $pr([x\mu(\pi)^{i}])=pr([\mu(t)x\mu(\pi)^{i}\mu(t)^{-1}])=pr([\mu(t)x\mu(t)^{-1}\mu(\pi)^{i}])$, here we use $\mu(\mathbb{F}_q^*)\subset K$.  Because the cardinality of $\mu(\mathbb{F}_q^*)$ is $q-1$, we get $q-1|pr((U_{\mu^{i}}-\mu^{i}(\pi))([1]))$.
\end{proof}
We make a remark. In fact, we can replace the element $[1]$ by any element $[t]$, where $t$ is an element in $\textbf{T}(F)$, this lemma still holds with the same proof.

Now we will translate this lemma into the relative setting.

Suppose there exists a closed reductive subgroup $\textbf{H} \subset \textbf{G}_{F}$ and a character $v:\textbf{H}\longrightarrow \mathbb{G}_{m,F}$ such that they satisfy the following conditions $(*)$:

$\bullet$ The cocharacter $\mu:\mathbb{G}_{m,F}\longrightarrow \textbf{G}_{F}$ factor through $\textbf{H}$ and we still denote this cocharacter $\mathbb{G}_{m,F}\longrightarrow \textbf{H}$ by $\mu$.

$\bullet$ $v\circ \mu$ is the identity map for $\mathbb{G}_{m,F}$.

By the second condition, the map $v:\textbf{H}\longrightarrow \mathbb{G}_m$ is a quotient map (surjective). Moreover, taking $F$-points, we see that the induced map $\textbf{H}(F)\longrightarrow \mathbb{G}_{m}(F)=F^*$ is also surjective and if we equip $\textbf{H}(F)$ and $F^*$ with the induced $p$-adic topology, this map is an open map.

Consider the following conductor filtration on $\mathbb{G}_{m}(F)$: For any non-negative integer $m$, define \[ \mathbb{G}_m(m) = \begin{cases}  \bO_F^* & \mbox{if } m=0 \\ 1+\pi^{m}\bO_F & \mbox{if } m>0 \end{cases}. \]
 We will use $v$ to define a conductor filtration on $\textbf{H}(F)$, just define $H(m)=v^{-1}(\mathbb{G}_m(m))$. By the second condition, we get $\mu(\mathbb{F}_q^*) \subset H(0)$. Now we can translate the above lemma as this theorem:

\begin{thm}{\textbf{divisibility}}
\label{divisible}

$q-1 |$ $Hep_{\mu}$($\mu(\pi)$)$([1])$ in  $R[H(0)\backslash G /K]$.

\end{thm}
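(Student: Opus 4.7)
The plan is to deduce Theorem \ref{divisible} from the preceding lemma by pushing the divisibility forward along a natural quotient map. Concretely, my first step is to verify the inclusion $\mu(\mathbb{F}_q^*) \subset H(0)$. By the second axiom of $(*)$ the composite $v\circ\mu$ is the identity on $\mathbb{G}_{m,F}$, so on points $v(\mu(t))=t$ for every $t\in F^*$; taking $t\in\mathbb{F}_q^*\subset\bO_F^*$ gives $v(\mu(t))\in\bO_F^*=\mathbb{G}_m(0)$, hence $\mu(t)\in v^{-1}(\bO_F^*)=H(0)$.

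Because $\mu(\mathbb{F}_q^*)$ is a subgroup of $H(0)$, the set-theoretic surjection $\mu(\mathbb{F}_q^*)\backslash G/K \twoheadrightarrow H(0)\backslash G/K$ induces a surjective $R$-linear map
\[
p_*\colon R[\mu(\mathbb{F}_q^*)\backslash G/K]\longrightarrow R[H(0)\backslash G/K]
\]
sending basis cosets to basis cosets. The next step is to check that $p_*$ is equivariant for the left action of $\mu(\pi)$ and the right action of $He$. The Hecke action acts on right $K$-cosets and so obviously descends through any quotient on the left. For the left action of $\mu(\pi)$ it suffices to observe that $\mu(\pi)$ normalizes both subgroups: it normalizes $\mu(\mathbb{F}_q^*)$ because $\mu$ lands in the torus $\textbf{T}$, and it normalizes $H(0)$ because for any $h\in H(0)$ one has $v(\mu(\pi)h\mu(\pi)^{-1})=v(h)\in\bO_F^*$ (the homomorphism $v$ has abelian target $\mathbb{G}_m$). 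Hence $x\mapsto \mu(\pi)x$ descends to well-defined operators on both quotients and commutes with $p_*$.

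The final step is a one-line application. Writing $Hep_\mu(X)=\sum_i A_i X^i$ with $A_i\in He$, the element $Hep_\mu(\mu(\pi))([1])$ is formed using only the left action of $\mu(\pi)$ and the right action of the $A_i$. By the previous lemma there exists $\xi\in R[\mu(\mathbb{F}_q^*)\backslash G/K]$ with $Hep_\mu(\mu(\pi))([1])=(q-1)\xi$ in that module; applying $p_*$ and using its equivariance gives $Hep_\mu(\mu(\pi))([1])=(q-1)p_*(\xi)$ in $R[H(0)\backslash G/K]$, which is the claim.

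There is no real obstacle here: the entire non-trivial content, namely the freeness of the $\mu(\mathbb{F}_q^*)$-action on $In^*$ and the seed relation, is already absorbed into the preceding lemma, and the remaining work is the purely formal verification that the coarser quotient by $H(0)$ is compatible with the $\mu(\pi)$- and $He$-actions used to phrase $Hep_\mu(\mu(\pi))([1])$.
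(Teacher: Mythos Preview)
Your proposal is correct and follows the same route as the paper: the paper does not give a separate proof for this theorem, it merely notes $\mu(\mathbb{F}_q^*)\subset H(0)$ and declares the theorem a ``translation'' of the preceding lemma. Your write-up makes this translation explicit via the surjection $p_*$ and the (harmless, though not strictly required) equivariance checks; in practice one may also simply compute $Hep_\mu(\mu(\pi))([1])$ once in $R[G/K]$ and push it through the tower $R[G/K]\to R[\mu(\mathbb{F}_q^*)\backslash G/K]\to R[H(0)\backslash G/K]$, but this is the same argument.
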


This divisibility can be translated as a kind of abstract relation. And it will also explain another reason behind the appearance of $q-1$.

Define $H^d=\ker (\textbf{H}(F) \overset{v}{\longrightarrow} \mathbb{G}_m(F) )$ and consider the module $R[H^{d} \backslash G/K]$, which will be a local analogue of our module of special cycles (see next section \ref{tamereal}). For an element $x\in H^{d} \backslash G/K$, we denote its corresponding element in $R[H^{d} \backslash G/K]$ by $[x]$. We have a distinguished element $[1]$.  Since $H^d$ is a normal subgroup of $\textbf{H}(F)$, the latter group acts on $H^d \backslash G/K$, and this action factors through $v$. Through this way we get an $\textbf{H}(F)$-action on $R[H^{d} \backslash G/K]$.

\begin{thm}{\textbf{abstract relation}}
\label{abstame}

The element $Hep_{\mu}(\mu(\pi))([1]) \in R[H^{d} \backslash G/K]$ lies in the image of the trace map

$Tr_{1,0}\overset{def}{=}Tr_{\frac{H(0)}{H(1)}}: R[H^{d} \backslash G/K]^{H(1)} \longrightarrow R[H^{d} \backslash G/K]^{H(0)}$.

\end{thm}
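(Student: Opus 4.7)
The plan is to refine the divisibility by $q-1$ from the previous lemma into an explicit trace decomposition, in which the cyclic group $H(0)/H(1)\cong\mathbb{F}_q^*$ plays the role of the ``$q-1$''. Using the seed relation $Hep_\mu(U_\mu)([1])=0$ and the expansion $Hep_\mu(X)=\sum_i A_iX^i$, rewrite in $R[G/K]$
\[
Hep_\mu(\mu(\pi))([1]) \;=\; -\sum_i x_i\cdot A_i, \qquad x_i \;=\; \sum_{z\in In^*}[z\mu(\pi)^i],
\]
where $In=I^+/\mu(\pi)^i I^+\mu(\pi)^{-i}$ and $In^*=In\setminus\{1\}$, as in the proof of the previous lemma. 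Since the right Hecke action commutes with the left $\textbf{H}(F)$-action on $R[H^d\backslash G/K]$, it is enough to produce, for each $i$, an $H(1)$-invariant element $y_i\in R[H^d\backslash G/K]$ with $Tr_{1,0}(y_i)=x_i$; then $-\sum_i y_i\cdot A_i$ is our preimage.

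Next I would identify the $H(0)$-action on the classes $[z\mu(\pi)^i]$. Because $v\circ\mu=\mathrm{id}$, the cocharacter $\mu$ splits the quotient $v:\textbf{H}\to\mathbb{G}_m$, so $H(0)=H^d\cdot\mu(\bO_F^*)$. Writing $h=h^d\mu(t)$ and using that $\mu(t)\in\textbf{T}(\bO_F)\subseteq K$ commutes with $\mu(\pi)^i$ yields
\[
h\cdot[z\mu(\pi)^i] \;=\; [\mu(t)\,z\,\mu(t)^{-1}\cdot\mu(\pi)^i]\qquad\text{in } R[H^d\backslash G/K].
\]
Via the $\textbf{T}(\bO_F)$-equivariant identification $In\cong\textbf{A}(\bO_F/\pi^i)=\prod_{\langle\alpha,\mu\rangle=1}(\bO_F/\pi^i)$ from the previous lemma, this becomes diagonal multiplication by $t\bmod\pi^i$; the minuscule hypothesis ensures every weight $\langle\alpha,\mu\rangle$ on $\textbf{A}$ equals $1$, so there is a single uniform scalar. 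Hence the $H(0)$-action factors through $(\bO_F/\pi^i)^*$, with $H(1)$ acting via $1+\pi\bO_F/\pi^i$ and $H(0)/H(1)\cong\mathbb{F}_q^*$.

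The key combinatorial input is a stabilizer computation: for $z=(x_\alpha)\in In^*$, with $j:=\min_\alpha\mathrm{ord}_\pi(x_\alpha)$ automatically $<i$, scalar multiplication gives
\[
\mathrm{Stab}_{(\bO_F/\pi^i)^*}(z) \;=\; 1+\pi^{i-j}(\bO_F/\pi^i) \;\subseteq\; 1+\pi\bO_F/\pi^i,
\]
so $\mathbb{F}_q^*$ acts \emph{freely} on the set of $H(1)$-orbits inside each $H(0)$-orbit $\mathcal{O}\subset In^*$. Equivalently, $\mathcal{O}=\bigsqcup_{t\in\mathbb{F}_q^*}\mu(t)\cdot\mathcal{O}'$ with $\mathcal{O}'\subset\mathcal{O}$ a union of $H(1)$-orbits, one per $\mathbb{F}_q^*$-coset. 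Setting $y_i=\sum_\mathcal{O}\sum_{z\in\mathcal{O}'}[z\mu(\pi)^i]$, each $H(1)$-orbit sum is $H(1)$-invariant, hence so is $y_i$, and $Tr_{1,0}(y_i)=\sum_{z\in In^*}[z\mu(\pi)^i]=x_i$, as required.

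The main obstacle is the stabilizer statement in the third paragraph. The previous lemma only uses the freeness of $\mu(\mathbb{F}_q^*)$ on $In^*$ itself; here we need the strictly stronger assertion that $\mathrm{Stab}_{\bO_F^*}(z)\subseteq 1+\pi\bO_F$, so that $\mathbb{F}_q^*$ acts freely not on $In^*$ but on the set of $H(1)$-orbits. This strengthening is precisely where the minuscule hypothesis on $\mu$ becomes essential: otherwise $\bO_F^*$ would act on $\textbf{A}(\bO_F/\pi^i)$ with different weights on different factors, the stabilizer would no longer have the clean form above, and the free action of $H(0)/H(1)$ on $H(1)$-orbits could fail.
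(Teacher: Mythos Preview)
Your argument is correct, but it takes a different and more explicit route than the paper's. The paper separates concerns: it first packages the orbit analysis into the black-box divisibility statement $q-1\mid Hep_\mu(\mu(\pi))([1])$ in $R[H(0)\backslash G/K]$ (the preceding lemma), and then uses a soft lifting argument. Namely, under the identifications $R[H(j)\backslash G/K]\cong R[H^d\backslash G/K]^{H(j)}$ for $j=0,1$, the trace map sends a class $[C]\in R[H(1)\backslash G/K]$ to $S(C)\,[pr(C)]$, where $S(C)=|\mathrm{Stab}_{H(0)/H(1)}(C)|$ always divides $q-1$. Writing $Hep_\mu(\mu(\pi))([1])=\sum_D a_D[D]$ with $q-1\mid a_D$, one simply picks any lift $C_D$ of each $D$ and sets $S_1=\sum_D \tfrac{a_D}{S(C_D)}[C_D]$; then $Tr(S_1)$ is the desired element. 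No further knowledge of the orbit structure is needed beyond the divisibility.

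Your approach instead goes back inside the proof of the divisibility lemma and upgrades it: you compute the exact stabilizer $\mathrm{Stab}_{(\bO_F/\pi^i)^*}(z)=1+\pi^{i-j}(\bO_F/\pi^i)\subset 1+\pi(\bO_F/\pi^i)$, so that $\mathbb{F}_q^*$ acts freely already on the set of $H(1)$-orbits in $In^*$, and then build an explicit preimage $-\sum_i y_i\cdot A_i$ orbit by orbit. This is a genuine strengthening of the lemma's freeness statement, and it works precisely because the minuscule hypothesis forces all weights on $\textbf{A}$ to equal $1$, as you note. The payoff of your route is an explicit, Hecke-compatible preimage; the payoff of the paper's route is that once divisibility is known, the lifting step is a one-line formal argument that never revisits $In^*$.
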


\begin{proof}

Because $H(0)$ fixes $[1]$ and the $H(0)$-action commutes with the operator $Hep_{\mu}(\mu(\pi))$, the element $Hep_{\mu}(\mu(\pi))([1])$ lies in $R[H^{d} \backslash G/K]^{H(0)}$.

Notice that each $H(0)$-orbit in $R[H^{d} \backslash G/K]$ is finite,  we obtain an $R$-linear isomorphism \[R[H(0)\backslash G/K]\cong R[H^{d}\backslash G/K ]^{H(0)}.\]

Denote the projection $H^{d}\backslash G/K \longrightarrow H(0)\backslash G/K$ by $pr_0$. For any element $C \in H(0)\backslash G/K$, this map is given by sending $[C]$ to $\displaystyle \sum_{x \in pr_0^{-1}(C)}[x]$.

Similarly, denote the projection $H^{d}\backslash G/K \longrightarrow H(1)\backslash G/K$ by $pr_1$, denote $H(1)\backslash G/K \longrightarrow H(0)\backslash G/K$ by $pr$. We get a $R$-linear isomorphism $R[H(1)\backslash G/K]\cong R[H^{d}\backslash G/K ]^{H(1)}$.

Consider the following commutative diagram:
\begin{diagram}
R[H(1)\backslash G/K] &\rTo^{\cong} &R[H^{d}\backslash G/K ]^{H(1)}\\
\dTo_{Tr} & &\dTo_{Tr_{1,0}}\\
 R[H(0)\backslash G/K] &\rTo^{\cong} &R[H^{d}\backslash G/K ]^{H(0)}
\end{diagram}
Here $Tr$ is the map induced by $Tr_{1,0}$, \textbf{not} the map induced by $pr$. Now let's explicit this map $Tr$:

For any $C \in H(1)\backslash G/K$, let $S(C)$ denote the cardinality of its stabilizer group in $\frac{H(0)}{H(1)}$. Then we have $S(C) | q-1$, because $q-1$ is the cardinality of $\frac{H(0)}{H(1)}$. And $Tr([C])=S(C)[pr(C)]$.

Now  in $ R[H(0)\backslash G/K]$, write $Hep_{\mu}(\mu(\pi))([1]) =\sum_{D} a_D[D]$, where $D$ runs over elements in $H(0)\backslash G/K$ with non-zero $a_D$. For each $D$, choose an element $C_D \in H(1)\backslash G/K$ such that $pr(C_D)=D$. By our divisibility theorem \ref{divisible}, for each $D$, $q-1| a_D$, thus $\frac{a_D}{S(C_D)}$ lies in $R$. Define $S_1=\sum_D \frac{a_D}{S(C_D)}[C_D] $, then $Tr(S_1)=Hep_{\mu}(\mu(\pi))([1])$. We're done.
\end{proof}
As in our previous lemma, we can also replace $[1]$ by $[t]$ ($t \in \textbf{T}(F)$) with the same proof.

Finally we propose a variant. This theorem is already enough for our main example in next section \ref{tamereal}. For more general cases, we only need a slight change, see section \ref{Gsp4} and section \ref{unitary}.

Suppose we have the following integral version conditions $(\spadesuit)$ of  conditions (*):

$\spadesuit$ Suppose there exists a closed reductive group $\textbf{H}\hookrightarrow \textbf{G}$ over $\spec(\bO_F)$ and a character $v:\textbf{H}\longrightarrow \mathbb{G}_m$ over $\spec(\bO_F)$ such that

$\bullet$ The cocharacter $\mu:\mathbb{G}_{m}\longrightarrow \textbf{G}$ factor through $\textbf{H}$ and we still denote this cocharacter $\mathbb{G}_{m}\longrightarrow \textbf{H}$ by $\mu$.

$\bullet$ $v\circ \mu$ is the identity map for $\mathbb{G}_{m}$.

And we will define the conductor filtration on $\textbf{H}(\bO_F)$ instead of the whole group $\textbf{H}(F)$. For any non-negative integer, define $H(m)=v^{-1}(\mathbb{G}_m(m))$. Next we replace the group $H^d$ by $H^{der}=\textbf{H}^{der}(F)$. Other things remain the same, then we  get the following version of abstract tame relation:

\begin{thm}{\textbf{variant abstract relation}}
\label{abstame2}

The element $Hep_{\mu}(\mu(\pi))([1]) \in R[H^{der} \backslash G/K]$ lies in the image of the trace map

$Tr_{1,0}\overset{def}{=}Tr_{\frac{H(0)}{H(1)}}: R[H^{der} \backslash G/K]^{H(1)} \longrightarrow R[H^{der} \backslash G/K]^{H(0)}$.

\end{thm}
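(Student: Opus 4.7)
The proof will be essentially that of Theorem \ref{abstame}, modified to accommodate the fact that $H^{der}$ need no longer be contained in $H(0)$. The integral hypothesis $(\spadesuit)$ ensures that $H(0)\subset\textbf{H}(\bO_F)\subset\textbf{G}(\bO_F)=K$, so $H(0)$ still fixes $[1]$ in $G/K$ and the divisibility Theorem \ref{divisible} applies verbatim, yielding a decomposition $Hep_{\mu}(\mu(\pi))([1])=\sum_{D}a_{D}[D]$ in $R[H(0)\backslash G/K]$ with $q-1\mid a_{D}$ for every $D$.

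Since $v$ is a character into the abelian group $\mathbb{G}_{m}$, the subgroup $\textbf{H}^{der}$ lies in $\ker v$, so $H^{der}$ is normal in $\textbf{H}(F)$ and is in particular normalized by each $H(i)$. In place of the original isomorphism $R[H(i)\backslash G/K]\cong R[H^{d}\backslash G/K]^{H(i)}$ I would use its natural replacement
\[
R[H(i)H^{der}\backslash G/K]\;\cong\;R[H^{der}\backslash G/K]^{H(i)}\qquad(i=0,1),
\]
sending a class $[C]$ to the sum over its preimage in $H^{der}\backslash G/K$; finiteness of the $H(i)$-orbits follows from the compactness of $H(0)\subset\textbf{H}(\bO_F)$. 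Under these identifications the trace $Tr_{1,0}$ acts by $[C]\mapsto S'(C)\,[pr(C)]$, where $S'(C)$ denotes the order of the stabilizer of $C$ in the finite group $H(0)H^{der}/H(1)H^{der}$. This last group is a quotient of $H(0)/H(1)\cong\mathbb{F}_{q}^{*}$ (the isomorphism coming from the section $\mu$ of $v$), hence $S'(C)\mid q-1$.

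To finish, I would push the divisibility obtained in the first paragraph along the further projection $R[H(0)\backslash G/K]\to R[H(0)H^{der}\backslash G/K]$, which preserves the divisibility of the coefficients. For each class $D$ that now contributes, I would choose a lift $C_{D}\in H(1)H^{der}\backslash G/K$ with $pr(C_{D})=D$; since $S'(C_{D})\mid q-1\mid a_{D}$, the coefficient $a_{D}/S'(C_{D})$ lies in $R$, and the element
\[
S_{1}\;:=\;\sum_{D}\frac{a_{D}}{S'(C_{D})}\,[C_{D}]\;\in\;R[H^{der}\backslash G/K]^{H(1)}
\]
satisfies $Tr_{1,0}(S_{1})=Hep_{\mu}(\mu(\pi))([1])$, as required. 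I do not anticipate any substantive obstacle: the entire structural content of the original proof (the finiteness of orbits, the seed/divisibility input, and the $S(C)\mid q-1$ factorization of the trace) transfers to this setting once the double coset spaces $H(i)\backslash G/K$ are systematically replaced by $H(i)H^{der}\backslash G/K$; the only point requiring attention is verifying that $H^{der}\subset\ker v$ in order to ensure normality and the order-dividing statement for $H(0)H^{der}/H(1)H^{der}$.
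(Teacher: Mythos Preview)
Your proposal is correct and follows essentially the same approach as the paper: the paper gives no separate proof for this variant, merely stating that after imposing $(\spadesuit)$, defining the filtration on $\textbf{H}(\bO_F)$, and replacing $H^{d}$ by $H^{der}$, ``Other things remain the same''. You have accurately identified and carried out the one modification that is actually needed---since $H^{der}=\textbf{H}^{der}(F)$ is no longer contained in $H(0)$, the identifications $R[H(i)\backslash G/K]\cong R[H^{der}\backslash G/K]^{H(i)}$ must be replaced by $R[H(i)H^{der}\backslash G/K]\cong R[H^{der}\backslash G/K]^{H(i)}$---and correctly verified that the divisibility and the $S'(C)\mid q-1$ factorization survive this change.
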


\subsection{Realization}
\label{tamereal}

In this section, we will show that the abstract relation can be translated into "real" tame relations for special cycles. This section is in global setting, we will use notations in section \ref{gloset} again.

First we enlarge the coefficient ring $\mathbb{Z}$ into $R$. Here we take $R$ to be a $l$-adic integer ring. Recall we have taken a finite set $S_1$ ("bad primes") containing this prime $l$. Then for any prime $p \notin S_1$, we have $p^{-1}\in R$, so by the discussion about coefficients for Hecke algebra in section \ref{locset}, $R$ is large enough,  i.e. the coefficients of the Hecke polynomial belong the the Hecke algebra with coefficients in $R$.

Next we will use the previous section's theorems  to deduce a local result for our main example.

Recall that $F$ is a totally real field with a quadratic CM extension $F\longrightarrow E$. And $V$ is a $2n+1$-dimensional $F$-space  with a quadratic form $\phi$. Inside $(V,\phi)$, we have a $n$-dimensional $E$-hermitian space $(W,\psi)$. We have a pair of $F$-reductive groups $(\underline{\textbf{G}},\underline{\textbf{H}})=(SO(V),U(W))$ and our main example is $(\textbf{G},\textbf{H})=(\mathrm{Res}_{F/\mathbb{Q}}\underline{\textbf{G}},\mathrm{Res}_{F/\mathbb{Q}}\underline{\textbf{H}})$.

Take a finite place $v \notin S$ ("bad places") of $F$, suppose its underlying prime of $\mathbb{Q}$ is $p$, then $p \notin S_1$ and $v$ splits in $F\longrightarrow E$. Fix an isomorphism $ \displaystyle F \otimes \mathbb{Q}_p \cong \prod_{\mathfrak{p}|p}F_{\mathfrak{p}}$, for simplicity, we require the first factor to be $F_v$.

Consider the base change of our pair $(\textbf{G},\textbf{H})$ to $\mathbb{Q}_p$. The above isomorphism $ \displaystyle F \otimes \mathbb{Q}_p \cong \prod_{\mathfrak{p}|p}F_{\mathfrak{p}}$ will induce an isomorphism $\displaystyle \textbf{G}_{\mathbb{Q}_p} \cong \prod_{\mathfrak{p}|p}
\mathrm{Res}_{F_{\mathfrak{p}}/\mathbb{Q}_p}\underline{\textbf{G}}_{F_{\mathfrak{p}}}$, and for simplicity we define $\textbf{G}_p=\textbf{G}_{\mathbb{Q}_p}$, $\textbf{G}_{\mathfrak{p}}=\mathrm{Res}_{F_{\mathfrak{p}}/\mathbb{Q}_p}\underline{\textbf{G}}_{F_{\mathfrak{p}}}$.
Similarly we have $\displaystyle \textbf{H}_{\mathbb{Q}_p} \cong \prod_{\mathfrak{p}|p}
\mathrm{Res}_{F_{\mathfrak{p}}/\mathbb{Q}_p}\underline{\textbf{H}}_{F_{\mathfrak{p}}}$ and define $\textbf{H}_{p}=\textbf{H}_{\mathbb{Q}_p}$, $\textbf{H}_{\mathfrak{p}}=\mathrm{Res}_{F_{\mathfrak{p}}/\mathbb{Q}_p}\underline{\textbf{H}}_{F_{\mathfrak{p}}}$.

Extend the embedding $F \longrightarrow F_v$ into an embedding $\overline{\mathbb{Q}} \longrightarrow \overline{F_v}=\overline{\mathbb{Q}_p}$. Along this way, we pick up a prime $v^{+}$ of $E$ lying over $v$ and identify $E_{v^{+}}$ with $F_v$. Denote the conjugation of $v^{+}$ by $v^{-}$ and denote $E_{v^{-}}$ by $F_{\overline{v}}$. Under this isomorphism $E\otimes_{F}F_v\cong F_v \times F_{\overline{v}}$, the conjugation on the left side will correspond to swap factor involution on the right side.

Recall our Shimura cocharacter conjugacy class $[\mu_\textbf{G}]$ and now transfer it  into the $p$-adic setting.  The embeddings between fields $\mathbb{C}\hookleftarrow \overline{\mathbb{Q}} \hookrightarrow \overline{F_v}$ induce isomorphisms \[\Hom_{\mathbb{Q}}(F,\mathbb{C}) \cong \Hom_{\mathbb{Q}}(F,\overline{\mathbb{Q}}) \cong \Hom_{\mathbb{Q}}(F, \overline{F_v}),\] and conjugacy classes of cocharacters \[C_*(\textbf{G}_{\mathbb{C}}) \cong C_*(G_{\overline{\mathbb{Q}}})\cong C_*(G_{\overline{F_v}}).\] The isomorphism $\displaystyle \textbf{G}_p \cong \prod_{\mathfrak{p}}\textbf{G}_{\mathfrak{p}}$ induces an isomorphism $\displaystyle C_*(\textbf{G}_{p,\overline{F_v}})\cong \prod_{\mathfrak{p}} C_*(\textbf{G}_{\mathfrak{p},\overline{F_v}})$, and our conjugacy class of Shimura cocharacter $[\mu_\textbf{G}]$ will correspond to an element $[\mu_{\textbf{G},v}]=([\mu_v],1,...,1)$. Denote  $\textbf{G}_p(\mathbb{Q}_p)$ by $G_p$  with  associated Hecke algebra $He_{p}$ and  $\textbf{G}_{\mathfrak{p}}(\mathbb{Q}_p)$ by $G_{\mathfrak{p}}$ with associated Hecke algebra $He_{\mathfrak{p}}$. Through the natural isomorphism $\displaystyle He_p \cong \bigotimes_{\mathfrak{p}\mid p}He_{\mathfrak{p}}$, we have an identity for their Hecke polynomials $Hep_{\mu_{\textbf{G},v}}=Hep_{\mu_v}$. This finishes the first step reduction.

To apply previous section's theorems,  we next analyze our subgroup $\textbf{H}_v$. Recall $\textbf{H}_v=\mathrm{Res}_{F_v/\mathbb{Q}_p}\underline{\textbf{H}}_{F_v}$. Because $v$ is a split prime and we have fixed $E_{v^+}\cong F_v$, the unitary group $\underline{\textbf{H}}_{F_v}$ splits. More precisely, through $E \otimes_{F} F_v \cong F_v \times F_{\overline{v}}$, we have $W_{F_v}=W\otimes_{F}F_v\cong W_v \oplus W_{\overline{v}} $, where $W_v=W\otimes_{E}F_v$ and $W_{\overline{v}}=W \otimes_{E} F_{\overline{v}}$. Then we get $\underline{\textbf{H}}_{F_v} \cong GL(W_v)_{F_v}$. Similarly we have $\underline{\textbf{T}}_{F_v} \cong \mathbb{G}_{m,F_v}\times \mathbb{G}_{m,F_{\overline{v}}}$ and $\underline{\textbf{T}^1}_{F_v} \cong \mathbb{G}_{m,F_v}$. Under these isomorphisms, the determinant map $\det:\underline{\textbf{H}}_{F_v}\longrightarrow \underline{\textbf{T}^1}_{F_v}$ corresponds to the determinant map for general linear groups $\det:GL(W_v)\longrightarrow \mathbb{G}_{m,F_v}$, the  map $\underline{r}$  sends $(x,y)\in Al^*\times Al^*$ to $\frac{x}{y} \in Al^*$ for any $F_v$-algebra $Al$ and the inclusion $\underline{\textbf{T}^1}_{F_v}(Al)\hookrightarrow \underline{\textbf{T}}_{F_v}(Al)$ is $x \longrightarrow (x,x^{-1})$. Now take a reductive integral model $\underline{\mathcal{G}}_v$ over $\spec(\bO_{F_v})$ for $\underline{\textbf{G}}_{F_v}$ such that $\underline{\mathcal{G}}_v(\bO_{F_v})$ equals  $K_v$. Then take a Borel pair $(\underline{\mathcal{B}}_v,\underline{\mathcal{T}}_v)$ for $\underline{\mathcal{G}}_v$ so that $(\underline{\mathcal{B}}_{v,F_v}\bigcap \underline{\textbf{H}}_{F_v},\underline{\mathcal{T}}_{v,F_v} \bigcap \underline{\textbf{H}} _{F_v})$ is a Borel pair for $\underline{\textbf{H}}_{F_v}$.  This Borel pair for $\underline{\textbf{H}}_{F_v}$ will give us an ordered $F_v$-basis $(x_1,...,x_n)$ for $W_v$ so that $\underline{\mathcal{B}}_{v,F_v}\bigcap \underline{\textbf{H}}_{F_v}$ corresponds to upper triangular matrices and $\underline{\mathcal{T}}_{v,F_v} \bigcap \underline{\textbf{H}} _{F_v}$ corresponds to diagonal matrices.  Now define a cocharacter $\mu_{v,0}:\mathbb{G}_{m,F_v}\longrightarrow \underline{\textbf{H}}_{F_v}$ by sending $t$ to $(t,1,...,1)$ under this ordered basis. Composing it with the embedding $\underline{\textbf{H}}_{F_v}\hookrightarrow \underline{\textbf{G}}_{F_v}$, we get a cocharacter for $\underline{\textbf{G}}_{F_v}$ factoring through its maximal torus $\underline{\mathcal{T}}_{v,F_v}$ and this cocharacter has a unique extension to $\underline{\mathcal{T}}_{v}$ over $\spec(\bO_{F_v})$, for simplicity we still denote them by $\mu_{v,0}$.

Now we relate $\mu_{0,v}$ to $[\mu_v]$. Recall $\textbf{G}_v=\mathrm{Res}_{F_v/\mathbb{Q}_p}\underline{\textbf{G}}_{F_v}$, then over $\overline{F_v}=\overline{\mathbb{Q}_p}$, we have an isomorphism for conjugacy classes of cocharacters, \[  C_*(\textbf{G}_{v,\overline{F_v}})\cong \prod_{\delta \in Hom_{\mathbb{Q}_p}(F_v,\overline{F_v})} C_*(\underline{\textbf{G}}_{v,\delta,\overline{F_v}}),\] here $\underline{\textbf{G}}_{v,\delta,\overline{F_v}}= \underline{\textbf{G}}_{F_v}\otimes_{F_v,\delta}\overline{F_v}$ and for simplicity we assume the first factor corresponds to our fixed inclusion $F_v\hookrightarrow \overline{F_v}$. Then $[\mu_v]=([\mu_{v,0}],1,...,1)$. For $(\underline{\textbf{G}}_{F_v},[\mu_{v,0}])$ we have the Hecke polynomial $Hep_{\mu_{v,0}}\in \underline{He}_{v}[X]$, here $\underline{He}_v$ is its Hecke algebra $R[K_v \backslash \textbf{G}_{v}(F_v) / K_v]=He_{v}$, then $Hep_{\mu_{v,0}}=Hep_{\mu_v}$ by \cite{cornut2018} section 10.2.

Therefore we have finished the second reduction and ready to apply previous section's theorems now.

Because $\det(\mu_{v,0})$ is the identity map for $\mathbb{G}_{m,F_v}$, the pair $(\underline{\textbf{H}}_{F_v},\det)$ satisfies conditions (*) in previous section. The map $\det$ induces a conductor filtration  $\{\underline{\textbf{H}}_v(m)|m\geq 0\}$ on $\underline{\textbf{H}}_{F_v}(F_v)=\textbf{H}_v(\mathbb{Q}_p)$ and its kernel is exactly $\underline{\textbf{H}}^{der}_{F_v}(F_v)$. Denote this kernel by $H_v^{d}$ and define $H_{v}(m)=\underline{\textbf{H}}_{v}(m)$. Take a uniformizer $\pi_v \in \bO_{F_v}$  and apply  theorem \ref{abstame} to this pair $(\underline{\textbf{H}}_{F_v},\underline{\textbf{G}}_v)$, we get the following local result:
\begin{thm}{\textbf{local tame relation}}
\label{loctame}

The element $Hep_{\mu_{v,0}}(\mu_{v,0}(\pi_v))([1]) \in R[H_v^{d} \backslash G_v/K_v]$ lies in the image of trace map

$Tr_{v,1,0}\overset{def}{=}Tr_{\frac{H_v(0)}{H_v(1)}}: R[H_v^{d} \backslash G_v/K_v]^{H_v(1)} \longrightarrow R[H_v^{d} \backslash G_v/K_v]^{H_v(0)}$.

\end{thm}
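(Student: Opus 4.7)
The plan is to apply Theorem \ref{abstame} (the abstract tame relation) directly to the pair of $F_v$-reductive groups $(\underline{\textbf{H}}_{F_v},\underline{\textbf{G}}_{F_v})$ equipped with the character $\det:\underline{\textbf{H}}_{F_v}\to\mathbb{G}_{m,F_v}$ and the cocharacter $\mu_{v,0}$. Essentially all of the setup work has already been done in the paragraphs preceding the theorem; the task is to verify the conditions $(*)$ are met, match the local notation with the abstract one, and then invoke the theorem. The point playing the role of the uniformizer $\pi$ in the abstract framework is $\pi_v\in\bO_{F_v}$.

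First I would check conditions $(*)$. By construction $\mu_{v,0}$ is defined on the diagonal torus $\underline{\mathcal{T}}_{v,F_v}\cap\underline{\textbf{H}}_{F_v}$ via $t\mapsto(t,1,\dots,1)$ in the ordered basis $(x_1,\dots,x_n)$ of $W_v$, so it factors through $\underline{\textbf{H}}_{F_v}\cong GL(W_v)$, and the composition $\det\circ \mu_{v,0}$ is manifestly the identity on $\mathbb{G}_{m,F_v}$. Since $v$ splits in $F\to E$ the hermitian space $W_{F_v}$ becomes hyperbolic, which together with a one-dimensional complement makes $(V,\phi)_{F_v}$ of maximal Witt index, so $\underline{\textbf{G}}_{F_v}$ is split; the cocharacter $\mu_{v,0}$ is $\underline{\mathcal{B}}_{v,F_v}$-dominant by the choice of Borel and minuscule as a fundamental coweight of $GL(W_v)$.

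Next I would match names. The kernel $H_v^{d}=\ker(\det:\textbf{H}_v(\mathbb{Q}_p)\to F_v^*)$ coincides with $\underline{\textbf{H}}^{der}_{F_v}(F_v)$, since $\det$ realizes $\mathbb{G}_{m,F_v}$ as the maximal abelian quotient of $GL(W_v)$; the conductor filtration $H_v(m)=\det^{-1}(\mathbb{G}_m(m))$ is exactly the one appearing in Theorem \ref{abstame}; and the Hecke polynomial $Hep_{\mu_{v,0}}$ lives in $He_v[X]$ with $He_v=R[K_v\backslash G_v/K_v]$, since $\mu_{v,0}$ is minuscule and $R\supset\mathbb{Z}[q_v^{-1}]$ (because $v\notin S$ forces the residual characteristic to be invertible in $R$ by the choice of $S_1$). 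With these identifications, Theorem \ref{abstame} applied with $\pi=\pi_v$ yields precisely the asserted membership in the image of $Tr_{v,1,0}$.

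The only genuine point of caution, and the place where I would slow down, is the compatibility of the integral structures with the hypotheses of section \ref{locset}: I must take the hyperspecial subgroup $K_v=\underline{\mathcal{G}}_v(\bO_{F_v})$ and the Borel pair $(\underline{\mathcal{B}}_v,\underline{\mathcal{T}}_v)$ chosen so that intersecting with $\underline{\textbf{H}}_{F_v}$ gives a Borel pair of $\underline{\textbf{H}}_{F_v}$, so that the Iwasawa decomposition, the $U$-operator formula $(\ref{reda1})$, and the identification $\mu(\mathbb{F}_{q_v}^*)\subset H_v(0)$ all transport simultaneously from $\underline{\textbf{G}}_{F_v}$ to $\underline{\textbf{H}}_{F_v}$. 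Apart from this bookkeeping and the identification $Hep_{\mu_{v,0}}=Hep_{\mu_v}$ via \cite{cornut2018} section 10.2 (which handles the Weil-restriction factor on the $\mathbb{Q}_p$-side), no new ingredient beyond Theorem \ref{abstame} is required.
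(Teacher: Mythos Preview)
Your proposal is correct and matches the paper's own argument: the theorem is obtained by verifying that $(\underline{\textbf{H}}_{F_v},\det)$ satisfies conditions $(*)$ of section~\ref{abstamerel} and then invoking Theorem~\ref{abstame} with $\pi=\pi_v$. The paper records this as a one-line application of Theorem~\ref{abstame}; your added checks (splitness of $\underline{\textbf{G}}_{F_v}$, minusculeness and dominance of $\mu_{v,0}$, the integral compatibility of $K_v$ and the Borel pair, and the identification $Hep_{\mu_{v,0}}=Hep_{\mu_v}$) are exactly the bookkeeping implicit in the paragraphs preceding the statement.
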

Here $R[H_v^{d} \backslash G_v/K_v]$ is exactly the local version of our module for special cycles and $\mu_{v,0}(\pi_v)$  can be seen as an analogue of the \textbf{Frobenius}:

Still denote the image of $\mu_{v,0}(\pi_v)$ (resp. $\pi_v$) under the inclusion $\underline{\textbf{H}}(F_v)\hookrightarrow \textbf{H}(\mathbb{A}_f)$ (resp. $\underline{\textbf{T}^1}(F_v) \hookrightarrow \textbf{T}^1(\mathbb{A}_f) $) by $\mu_{v,0}(\pi_v)$ (resp. $\pi_v$). Here recall we have identified $\underline{\textbf{T}^1}_{F_v}$ with $\mathbb{G}_{m,F_v}$ and $\pi_v \in \mathbb{G}_{m,F_v}(F_v)$. Denote the class of $\pi_v$ in $\textbf{T}^1(\mathbb{A}_f)/\textbf{T}^1(\mathbb{Q})$ by $[\pi_v]$. Because $\det(\mu_{v,0}(\pi_v))=\pi_v=\underline{r}((...,1,\pi_v,1,...))$ and then $\mathrm{Art}_{E}^{1}([\pi_v])=Frob_{v^+}|_{E[\infty]}$. According to the reciprocity law, for any special cycle $Z_K(\widetilde{g})$, we have \[Frob_{v^{+}}(Z_K(\widetilde{g}))=Z_K(\mu_{v,0}(\pi_v)\widetilde{g}).\]

We also notice that the two conductor filtrations for torus (defined in section \ref{gloset} and section \ref{abstame}) are indeed the same. For any positive integer $m$, we have the following exact sequences:

\begin{diagram}
1 &\rTo &\mathbb{G}_{m}(\bO_{F_v}) &\rTo &\underline{\textbf{T}}(\bO_{F_v}) &\rTo^{\underline{r}}  &\underline{\textbf{T}}^1(\bO_{F_v}) &\rTo &1\\
   &  &\dTo^{red_0} &  &\dTo^{red_1} &  &\dTo^{red_2}\\
1 &\rTo &\mathbb{G}_{m}(\bO_{F_v}/\pi_v^m) &\rTo &\underline{\textbf{T}}(\bO_{F_v}/\pi_v^m)  &\rTo &\underline{\textbf{T}}^1(\bO_{F_v}/\pi_v^m) &\rTo &1
\end{diagram}

These three vertical reduction maps ($red_0$, $red_1$, $red_2$) are surjective, then the snake lemma implies $\underline{r}(\ker(red_1))=\ker(red_2)$. The zero step in both filtrations are $\underline{\textbf{T}}^{1}(\bO_{F_v})$. Therefore these two filtrations are the same. And in fact this proof holds for any prime of $F$.

Now recall our base cycle $z=Z_K(g)$ with $g=g_{S_1}\times g^{S_1}$ and recall we have the following natural map:
\[{\bigotimes_{w}}^{'} R[\underline{\textbf{H}}^{der}(F_w)\backslash \underline{\textbf{G}}(F_w)/K_w] \longrightarrow R[Z_K(\textbf{G},\textbf{H})].\] By our assumption for $S_1$ and $S$, we can assume $z=z_S \otimes 1^S$, here $\displaystyle 1^S=\bigotimes_{w \notin S}1$. For each prime $v \in \underline{\textbf{P}}$, by the above theorem, there exists an element $\widehat{z_v} \in R[\underline{\textbf{H}}^{der}(F_v) \backslash \underline{\textbf{G}}(F_v)/K_v]^{H_v(1)}$ such that $Tr_{v,1,0}(\widehat{z_v})=Hep_{\mu_{v,0}}(Frob_{v^+})([1])$. For any ideal $m \in \underline{\textbf{N}}$, we define a cycle $\displaystyle z(m)=z_S \otimes(\bigotimes_{v\mid m}\widehat{z_v})\otimes(\bigotimes_{v\nmid n,v \notin S}1)$. Putting everything together, we finally obtain the following result:
\begin{thm}{\textbf{tame relation}}

We have $z(1)=z$. And for any $m \in \underline{\textbf{N}}$, the special cycle $z(m)$ is defined over $\underline{E}[m]$, and for any $v \in \underline{P}$ does not divide $m$, we have $Tr_{\frac{\underline{E}[mv]}{\underline{E}[m]}}(z(mv))=Hep_{\mu_{\textbf{G},v}}(Frob_{v^+})(z(m))$.
\end{thm}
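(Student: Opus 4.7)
The plan is to dispatch the three assertions in sequence, reducing each to the local computations already carried out.

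The equality $z(1)=z$ is immediate from the definition: when $m=1$ the product $\bigotimes_{v \mid m}$ is empty, so $z(1) = z_S \otimes 1^S = z$.

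For the second assertion, that $z(m)$ is defined over $\underline E[m]$, I would apply the reciprocity law recalled at the end of Section \ref{gloset}: the action of $\mathrm{Art}_E^1(s)$ on $Z_K(\widetilde g)$ is by $Z_K(\widetilde s \widetilde g)$, where $\widetilde s \in \textbf{H}(\mathbb A_f)$ is any lift with $\det(\widetilde s)=s$ (such a lift exists because $\det$ is surjective on local points). Thus it suffices to verify that each local component of $z(m)$ is invariant under the corresponding local factor of $U^1(m) = U_S^1 \times \prod_{v \mid m} U_v^1(1) \times \prod_{v \notin S,\, v \nmid m} U_v^1(0)$. At the bad places, $z_S$ is $U_S^1$-invariant by the choice of $U_S^1$; at places $v \notin S$ with $v \nmid m$, the element $1 \in R[\underline{\textbf H}^{der}(F_v)\backslash \underline{\textbf G}(F_v)/K_v]$ is fixed by $\underline{\textbf H}(\mathcal O_{F_v})$, and hence via $\det$ by $U_v^1(0)$, thanks to the hyperspecial assumption on $K_v$; at places $v \mid m$, the factor $\widehat{z_v}$ was constructed to lie in the $H_v(1)$-invariants, which corresponds exactly to $U_v^1(1)$-invariance. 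Combining these, $z(m)$ is fixed by $\mathrm{Art}_E^1(U^1(m)) = \mathrm{Gal}(E[\infty]/\underline E[m])$, so it descends to $\underline E[m]$.

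For the third assertion I would combine Lemma \ref{locglo} with Theorem \ref{loctame}. By Lemma \ref{locglo}, $\det$ and $\mathrm{Art}_E^1$ induce canonical isomorphisms $H_v(0)/H_v(1) \cong U_v^1(0)/U_v^1(1) \cong \mathrm{Gal}(\underline E[mv]/\underline E[m])$. Under the reciprocity law the Galois action on the local factor of $z(mv)$ is precisely the action of $H_v(0)/H_v(1)$ on $R[H_v^d \backslash G_v / K_v]$, since this action factors through $\det$ and agrees with left multiplication. Consequently the Galois trace $Tr_{\underline E[mv]/\underline E[m]}$ restricted to the $v$-factor coincides with the local trace $Tr_{v,1,0}(\widehat{z_v})$, which by Theorem \ref{loctame} equals $Hep_{\mu_{v,0}}(\mu_{v,0}(\pi_v))([1])$. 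Finally, the computation in the realization section identifies $\mu_{v,0}(\pi_v)$, viewed in $\textbf{H}(\mathbb A_f)$, with a representative of $Frob_{v^+}|_{E[\infty]}$, and the reduction there identifies $Hep_{\mu_{v,0}} = Hep_{\mu_{\textbf{G},v}}$. Putting the other components of $z(m)$ (which are unchanged) back in, this is exactly the asserted identity.

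The main obstacle is the compatibility in the third step: one must check carefully that the Galois trace over $\underline E[mv]/\underline E[m]$, when computed via the reciprocity law acting on a chosen lift $\widetilde s_v \in \underline{\textbf H}(F_v)$, agrees (on $z(mv)$) with the local trace $Tr_{v,1,0}$ operating on the tensor factor $\widehat{z_v}$. This hinges on the equality of the two conductor filtrations on $\underline{\textbf T}^1(\mathcal O_{F_v})$ (verified via the snake-lemma diagram in the realization section) and on the fact that the natural map $\textstyle{\bigotimes}'_w R[\underline{\textbf H}^{der}(F_w)\backslash \underline{\textbf G}(F_w)/K_w] \to R[Z_K(\textbf{G},\textbf{H})]$ is $\textbf{H}(\mathbb A_f)$-equivariant. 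Once this compatibility is in place, the rest of the argument is formal bookkeeping with the tensor product decomposition and the factorization of $U^1(m)$.
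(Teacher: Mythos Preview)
Your proposal is correct and follows essentially the same approach as the paper: reduce each assertion to a local computation, identify the Galois trace with the local trace $Tr_{v,1,0}$ via Lemma~\ref{locglo}, and then invoke Theorem~\ref{loctame} together with the identifications $Hep_{\mu_{v,0}}=Hep_{\mu_{\textbf{G},v}}$ and $\mu_{v,0}(\pi_v)\leftrightarrow Frob_{v^+}$. The paper's proof is more terse, but your additional remarks on $U^1(m)$-invariance at each type of place and on the $\textbf{H}(\mathbb{A}_f)$-equivariance of the restricted tensor product map are exactly the compatibilities being used implicitly.
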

\begin{proof}
The special cycle $z(m)$ is fixed by $U^1(m)$ thus lies in $\underline{E}[m]$ and by definition $z(1)=z$. Notice that for any prime $w \notin S \bigcup\{v\}$, we have $z(m)_{w}=z(mv)_{w}$ ($w$-component). Applying the previous results, we get \[Tr_{\frac{\underline{E}[mv]}{\underline{E}[m]}}(z(mv)) \xlongequal{lemma\, \ref{locglo}} Tr_{\frac{U_{v}^1(0)}{U_v^1(1)}}(z(mv))\] \[\xlongequal{} z_S\otimes (Tr_{v,1,0}(\widehat{z_v})) \otimes (\bigotimes_{w \notin S \bigcup \{v\}}z(m)_{w})\] \[\xlongequal{theorem\, \ref{loctame}} z_S \otimes (Hep_{\mu_{\textbf{G},v}}(Frob_{v^+})([1])) \otimes (\bigotimes_{w \notin S \bigcup \{v\}}z(m)_{w})\] \[=Hep_{\mu_{\textbf{G},v}}(Frob_{v^+})(z(m)).\]
\end{proof}
Finally we make two remarks:

$\bullet$ Here for simplicity we take the base cycle $z=Z_K(g)$, it can be easily generalized to any cycle $\widetilde{z}=\sum_{i}Z_K(g_i)$. Just apply the above theorem  to each $Z_K(g_i)$, by linearity we're done.

$\bullet$ In this paper, we always use double coset description for Hecke algebra and use "formal" Hecke action. The geometric Hecke action is through the Geometric Hecke correspondence. These two actions are the same, we refer to Cornut's \cite{cornut2018} section 5.16 for details.

\section{Norm relation}

\subsection{Abstract relation}
\label{absnorm}

In this section, we come back to the local setting again.

Let $F$ denote a $p$-adic local field with a uniformizer $\pi$. Denote the cardinality of the residue field $\bO_F/\pi$ by $q$. Let $\textbf{G}$ denote a reductive group scheme over $\spec(\bO_F)$ and $\textbf{H}$ a closed subgroup scheme of $\textbf{G}$.  We assume this pair $(\textbf{G},\textbf{H})$ to be spherical over $\spec(\bO_F)$: there exists a Borel subgroup scheme $\overline{\textbf{B}}$ of $\textbf{G}$  such that the $\textbf{H}$-orbit of $[1]$ in $\textbf{G}/\overline{\textbf{B}}$ is open, equivalently, $Lie(\textbf{H})+Lie(\overline{\textbf{B}})=Lie(\textbf{G})$. Take a maximal torus $\textbf{T}$ inside $\overline{\textbf{B}}$ and denote by $\textbf{B}$ the Borel group of $\textbf{G}$ opposed to  $\overline{\textbf{B}}$ with respect to $\textbf{T}$.  Here we use notations compatible with those of  Loeffler's \cite{loeffler2019} section 4. Loeffler considers a more general situation where $\textbf{B}$ is replaced by a parabolic subgroup $Q_\textbf{G}$ and $[1]$ is replaced by $[u]$ ($u \in \textbf{G}(\bO_F)$). Our method can be generalized similarly but here this simplification is sufficient for our application.

 Denote $G=\textbf{G}(F)$, $H=\textbf{H}(F)$, $K=\textbf{G}(\bO_F)$ and $K_H=\textbf{H}(\bO_F)=H \bigcap K$. Denote the Hecke algebra for $G$ by $He=R[K \backslash  G/K]$, here $R$ is a suitable coefficient ring to realize Hecke polynomial (i.e. contains $\mathbb{Z}[q^{\pm\frac{1}{2}}]$). Now we will define a filtration on $G$ and $H$ ("level group filtration").

Denote the unipotent radical of $\textbf{B}$ and $\overline{\textbf{B}}$ by \textbf{N} and $\overline{\textbf{N}}$, so we have $\textbf{B}=\textbf{T} \ltimes \textbf{N}$ and $\overline{\textbf{B}} = \textbf{T} \ltimes \overline{\textbf{N}}$. Take  a strict $\textbf{B}$-dominant cocharacter $\mu \in X^+(\textbf{T})$, here strict means the associated parabolic group $\textbf{P}_{\mu}$ for $\mu$ is exactly $\textbf{B}$. Set $\tau=\mu(\pi)$ and thus we have $\tau \textbf{N}(O_F) \tau^{-1} \subset \textbf{N} (O_F)$, $\tau^{-1} \overline{\textbf{N}}(O_F) \tau \subset \overline{\textbf{N}}(O_F)$. Let $Hep_{\mu}\in He[X]$ and $U_{\mu}\in End_{\textbf{B}(F)}R[G/K]$ be the associated Hecke polynomial and $U$-operator.  Now for  any integer $m \geq 0$, define  $G_m=K \bigcap \tau^{m} K \tau^{-m}$, $H_m=G_m \bigcap \textbf{H}(F)$. So $G_0=K$, $H_0=K_H$. For a positive integer $m$, $G_m$ lies in the $\bO_F$-points of the "Big Bruhat cell" $\textbf{N}\times \textbf{T} \times \overline{\textbf{N}}$ and decomposes as $G_m=\tau^m\textbf{N}(\bO_F)\tau^{-m} \times \textbf{T}(\bO_F)\times \overline{\textbf{N}}(\bO_F)$. It follows that $G_m \supset G_{m+1}$ and $H_{m} \supset H_{m+1}$ for every $m$. Moreover, if $N_m = \textbf{N}(F) \bigcap G_m$, then $N_m = \tau^m \textbf{N}(\bO_F) \tau^{-m}$ and $N_m \hookrightarrow G_m $ induces a bijection $N_m/N_{m+i} = G_m / G_{m+i}$ for every $m$ and $i$.

 Regarding these level group filtrations,  we have the following comparison:
 \begin{lem}{\textbf{comparison} 1}
 \label{normlemma}

 For $m,i\geq 1$, the natural inclusion $H_m \subset G_m$ induces an isomorphism $H_m / H_{m+i}=G_m / G_{m+i}$.
 \end{lem}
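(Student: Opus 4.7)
The plan is to get injectivity immediately from the definitions, and obtain surjectivity through a Hensel-type lifting argument powered by the spherical condition.

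Injectivity is immediate: if $h\in H_m$ lies in $G_{m+i}$, then $h\in G_{m+i}\cap \textbf{H}(F)=H_{m+i}$. For surjectivity, I would use the Iwahori-type decomposition $G_m=N_m\cdot \textbf{T}(\bO_F)\cdot \overline{\textbf{N}}(\bO_F)=N_m\cdot\overline{\textbf{B}}(\bO_F)$ together with the bijection $N_m/N_{m+i}\xrightarrow{\sim} G_m/G_{m+i}$ recorded just above the statement. Since $\overline{\textbf{B}}(\bO_F)\subset G_{m+i}$, this reduces the task to showing that every $n\in N_m$ admits a factorization $n=hb$ with $h\in\textbf{H}(\bO_F)$ and $b\in\overline{\textbf{B}}(\bO_F)$, because then $h=nb^{-1}$ will automatically provide a representative of the coset $nG_{m+i}$ lying in $H_m$.

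The main tool is the multiplication map $\phi: \textbf{H}\times\overline{\textbf{B}}\longrightarrow \textbf{G}$, $(h,b)\mapsto hb$. The spherical hypothesis $\mathrm{Lie}(\textbf{H})+\mathrm{Lie}(\overline{\textbf{B}})=\mathrm{Lie}(\textbf{G})$, posed over $\spec(\bO_F)$, makes $d\phi_{(1,1)}$ surjective on both the generic and the special fibers, so $\phi$ is smooth at $(1,1)$ as a morphism of $\bO_F$-schemes and its image contains a Zariski open neighborhood $U\subset\textbf{G}$ of $1$. The critical numerical input is that $\mu$ is \emph{strict} dominant, so $\langle\alpha,\mu\rangle\geq 1$ for every positive root $\alpha$; hence conjugation by $\tau^m=\mu(\pi)^m$ scales the root coordinate $\textbf{N}_\alpha(\bO_F)\cong\bO_F$ by $\pi^{m\langle\alpha,\mu\rangle}$, so for $m\geq 1$ every $n\in N_m$ reduces to the identity modulo $\pi$. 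In particular $n\in U(\bO_F)$, and formal smoothness of $\phi$ at $(1,1)$ applied to the lift $(1,1)\in(\textbf{H}\times\overline{\textbf{B}})(\mathbb{F}_q)$ of $1\in U(\mathbb{F}_q)$ produces a factorization $n=hb$ with $h\in \textbf{H}(\bO_F)$, $b\in\overline{\textbf{B}}(\bO_F)$, and $h,b\equiv 1\pmod\pi$.

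The verification is then routine: $h=nb^{-1}$ with $n\in N_m\subset G_m$ and $b^{-1}\in\overline{\textbf{B}}(\bO_F)\subset G_m$, so $h\in G_m\cap \textbf{H}(F)=H_m$; moreover $n^{-1}h=b^{-1}\in\overline{\textbf{B}}(\bO_F)\subset G_{m+i}$, so $h$ and $n$ represent the same class in $G_m/G_{m+i}$, proving surjectivity. The main obstacle I anticipate is setting up formal smoothness integrally: one must check that $\phi$ is smooth at $(1,1)$ not only over $F$ but also on the special fiber, which is where the stipulation that the pair $(\textbf{G},\textbf{H})$ is spherical \emph{over} $\spec(\bO_F)$ is genuinely used, rather than merely on the generic fiber. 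Granted that, the Hensel lift is standard for smooth morphisms of $\bO_F$-schemes and the rest of the argument is bookkeeping.
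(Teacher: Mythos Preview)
Your proof is correct and follows essentially the same approach as the paper: both use the spherical hypothesis to obtain smoothness, observe that elements of $G_m$ reduce to the identity (class) modulo $\pi$, and then invoke a Hensel lift to produce the factorization $x=h\cdot c$ with $h\in H_m$ and $c\in\overline{\textbf{B}}(\bO_F)\subset G_{m+i}$. The only cosmetic difference is that the paper packages the smoothness and the lift via the orbit map $\Psi:\textbf{H}\to\textbf{G}/\overline{\textbf{B}}$ rather than the multiplication map $\phi:\textbf{H}\times\overline{\textbf{B}}\to\textbf{G}$, and works with an arbitrary $x\in G_m$ instead of first reducing to $N_m$; these formulations are equivalent.
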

\begin{proof}
This map is obviously injective and we only need to establish surjectivity.

Take an element $x \in G_m$ and denote its class in $(\textbf{G}/\overline{\textbf{B}})(\bO_{F})$ by $[x]$. Consider the reduction (or specialization) map $red:\textbf{G}(\bO_F)\longrightarrow \textbf{G}(\bO_F/\pi)$. Because $G_m=\tau^m\textbf{N}(\bO_F)\tau^{-m} \times \textbf{T}(\bO_F)\times \overline{\textbf{N}}(\bO_F)$, then $red(x) \in \overline{\textbf{B}}(\bO_F/\pi)$. Therefore $red([x])=red([1])$, here $red$ also denotes the reduction map for our flag scheme $(\textbf{G}/\overline{\textbf{B}})(\bO_F)\longrightarrow(\textbf{G}/\overline{\textbf{B}})(\bO_F/\pi)$.

Let $\Psi:\textbf{H}\longrightarrow \textbf{G}/ \overline{\textbf{B}}$ be the $\textbf{H}$-orbit map of $[1]$. By our spherical assumption ($Lie(\textbf{H})+Lie(\overline{\textbf{B}})=Lie(\textbf{G})$), this map is a smooth map. Because $red([x])=red([1])=\Psi(1)$ and $\bO_F$ is  Henselian, there exists an element $y \in \textbf{H}(\bO_F)$ such that $\Psi(y)=[x]$ and $red(y)=1$ (here $red$ is for $\textbf{H}$). Then $x=y \cdot c$, where $c \in \overline{\textbf{B}}(\bO_F)$. Since $c \in G_m$, it follows that $y$ belongs to $H_m$ and $[y]= [x]$  in $G_m/G_{m+i}$.
\end{proof}
Therefore we can identify $H_{m}/H_{m+i}$ with $N_{m}/N_{m+i}$ through these isomorphisms.

 Now consider our module $R[G/K]$. Denote $x_0=[1]=1_{K}$ and  define  $x_m=[\tau^m]=1_{\tau^m K}$ ($m$ is any positive integer).

 For any integer $m>0,i\geq 0$, we have the following lemma:
 \begin{lem}{\textbf{comparison} 2}

 $Tr_\frac{H_m}{H_{m+i}}(x_{m+i})=U_{\mu^{i}}(x_m)$ in $R[G/K]$, here $\displaystyle Tr_\frac{H_m}{H_{m+i}}(x_{m+i}) \overset{def}{=}\sum_{\delta \in H_{m}/H_{m+i}}\delta(x_{m+i})$.
 \end{lem}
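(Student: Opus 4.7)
The plan is to expand both sides into sums indexed by $N_m/N_{m+i}$ and match them term by term, using the bijections $H_m/H_{m+i} \cong G_m/G_{m+i} \cong N_m/N_{m+i}$ provided, respectively, by Lemma \ref{normlemma} and by the structural observation at the end of the paragraph preceding it.

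First I will unpack $U_{\mu^i}(x_m)$. Applying formula (\ref{reda1}) with $b = \tau^m$ and the cocharacter $\mu^i$ (so that $\mu^i(\pi) = \tau^i$ and $I^+ = \textbf{N}(\bO_F)$), I obtain
\[ U_{\mu^i}(x_m) = \sum_{z \in \textbf{N}(\bO_F)/\tau^i \textbf{N}(\bO_F)\tau^{-i}} 1_{\tau^m z \tau^i K}. \]
The change of variables $z' := \tau^m z \tau^{-m}$ converts the indexing set into $\tau^m \textbf{N}(\bO_F) \tau^{-m}/\tau^{m+i}\textbf{N}(\bO_F)\tau^{-(m+i)} = N_m/N_{m+i}$, and the equality $\tau^m z \tau^i = z' \tau^{m+i}$ rewrites the sum as $\sum_{z' \in N_m/N_{m+i}} 1_{z' \tau^{m+i} K}$.

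Next I will unpack the trace. For $\delta \in H_m$, one has $\delta x_{m+i} = 1_{\delta \tau^{m+i} K}$. By Lemma \ref{normlemma} the class of $\delta$ in $H_m/H_{m+i}$ corresponds under the inclusion $H_m \hookrightarrow G_m$ to a unique class in $G_m/G_{m+i}$, and this last set is in bijection with $N_m/N_{m+i}$. Hence I can write $\delta = z' g$ with $z' \in N_m$ (representing the class of $\delta$) and $g \in G_{m+i}$. The key remark is that $G_{m+i}$ fixes $\tau^{m+i} K$: indeed, $G_{m+i} \subset \tau^{m+i} K \tau^{-(m+i)}$ by definition, so $g \tau^{m+i} K = \tau^{m+i} K$. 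Consequently $\delta x_{m+i} = 1_{z' \tau^{m+i} K}$, and summing over $\delta \in H_m/H_{m+i}$ produces exactly the expression $\sum_{z' \in N_m/N_{m+i}} 1_{z' \tau^{m+i} K}$ obtained above.

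I do not anticipate a serious obstacle: both compatibility of the $H$-filtration with the $N$-filtration (via Lemma \ref{normlemma}) and the explicit $U_{\mu^i}$-formula are already available, and the only honest computation is the conjugation $z \mapsto \tau^m z \tau^{-m}$ that aligns the two indexing sets. The one point to be careful about is the well-definedness of the rewriting $\delta = z' g$: the choice of $z'$ and $g$ individually depends on the coset representative, but their product belongs to a fixed $H_{m+i}$-coset of $H_m$, so the resulting summand $1_{z' \tau^{m+i} K}$ is independent of these choices, which is exactly what is needed for the sum over $H_m/H_{m+i}$ to be well-defined and to agree with the sum over $N_m/N_{m+i}$.
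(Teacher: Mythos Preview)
Your proof is correct and follows essentially the same route as the paper: both arguments expand $U_{\mu^i}(x_m)$ via formula~(\ref{reda1}), perform the conjugation $z\mapsto \tau^m z\tau^{-m}$ to reindex over $N_m/N_{m+i}$, and then invoke Lemma~\ref{normlemma} to identify this with the sum over $H_m/H_{m+i}$. Your write-up is somewhat more explicit about why each term $\delta x_{m+i}$ depends only on the $N_m/N_{m+i}$-class of $\delta$ (via the decomposition $\delta = z'g$ with $g\in G_{m+i}$ fixing $\tau^{m+i}K$), but this is just a careful unpacking of what the paper's one-line computation leaves implicit.
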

\begin{proof}

If $i=0$, both operator are the identity. For positive integer $i$, it is a corollary of the above lemma \ref{normlemma}.

 Through the identifications $H_{m}/H_{m+i}=N_{m}/N_{m+i}=\tau^{m}(N_0/N_i) \tau^{-m}$, we find that  \[Tr_\frac{H_m}{H_{m+i}}(x_{m+i})=\sum_{\delta \in N_m/N_{m+i}}\delta(x_{m+i})=\sum_{z \in \frac{\textbf{N}(\bO_F)}{\tau^{i}\textbf{N}(\bO_F)\tau^{-i}}}\tau^mz\tau^{i}=U_{\mu^{i}}(x_m),\] where the last equality comes from the explicit formula (\ref{reda1}) for $U$-operator, there $I^{+}=\textbf{N}(\bO_F)$.
\end{proof}
 Now suppose the Hecke polynomial $Hep_{\mu}=\sum_i A_i X^{i}$, where $A_i \in R[K \backslash G/K]$ (Hecke algebra). Since  $U_{\mu^{i}}=(U_{\mu})^{i}$, applying Boumasmoud's relation (theorem \ref{reda2}), we have the following relation:

 \begin{lem}
 $\sum_i A_i Tr_{\frac{H_{m}}{H_{m+i}}}(x_{m+i})=0$ in $R[G/K]$.
 \end{lem}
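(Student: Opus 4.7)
The plan is to recognize the left hand side as exactly $Hep_\mu(U_\mu)$ applied to the base vector $x_m$, and then quote the Seed Relation (Theorem \ref{reda2}) to conclude that it vanishes.

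First, I would apply the preceding Comparison 2 lemma term by term to rewrite each trace as a $U$-operator: $Tr_{\frac{H_m}{H_{m+i}}}(x_{m+i}) = U_{\mu^i}(x_m)$. Invoking the third remark of section \ref{locset}, namely $U_{\mu^i} = (U_\mu)^i$, this becomes $(U_\mu)^i(x_m)$. Substituting into the sum gives
\[
\sum_i A_i \, Tr_{\frac{H_m}{H_{m+i}}}(x_{m+i}) \;=\; \sum_i A_i \, (U_\mu)^i(x_m) \;=\; Hep_\mu(U_\mu)(x_m),
\]
where the middle equality uses that each $A_i \in He$ acts on $R[G/K]$ as an element commuting with the left $G$-action, hence in particular with the left $B$-action and with $U_\mu \in End_B(R[G/K])$; so evaluating the polynomial $Hep_\mu(X) = \sum_i A_i X^i$ at $X = U_\mu$ produces a well-defined endomorphism in $End_B(R[G/K])$, consistent with the ambient ring in which the Seed Relation is stated.

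Finally, the Seed Relation (theorem \ref{reda2}) asserts $Hep_\mu(U_\mu) = 0$ in $End_B(R[G/K])$. Applying this vanishing endomorphism to the element $x_m \in R[G/K]$ yields $0$, which is precisely the stated identity.

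There is essentially no main obstacle: the lemma is a direct corollary of Comparison 2 (to trade the trace for $U_{\mu^i}$) together with the Seed Relation (to kill the resulting polynomial expression). The only subtlety worth acknowledging is the bookkeeping of left versus right actions — that the Hecke operators $A_i$ and the operator $U_\mu$ can be composed in the stated order — which is harmless because $He$ and $U_\mu$ commute in $End_B(R[G/K])$.
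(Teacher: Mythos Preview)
Your proposal is correct and follows essentially the same approach as the paper: the paper simply remarks that the lemma follows from Comparison 2 together with $U_{\mu^i}=(U_\mu)^i$ and the Seed Relation (theorem \ref{reda2}), which is exactly what you carry out in detail. Your extra remark on the commutation of the Hecke operators $A_i$ with $U_\mu$ is a helpful clarification, but not something the paper spells out.
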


 Now recall the local analogue of our modules for special cycles is $R[\textbf{H}^{der}(F)  \backslash G/K]$. Denote $\textbf{H}^{der}(F)$ by $H^{der}$, through the natural projection $G/K  \longrightarrow H^{der} \backslash G/K$, the above relation also holds in the second module. More precisely, define $H^{ab}=\frac{H}{H^{der}}$, $H_{m}^{ab}=Im(H_m)$, each fiber of the  map $H_{m}/H_{m+i} \longrightarrow H_{m}^{ab}/H_{m+i}^{ab}$ has constant cardinality, denote this cardinality by $c(m,i)$. Then we get:

  \begin{thm}{\textbf{abstract norm relation}}
 \label{absnormrel}

  $\sum_i c(m,i)A_i Tr_{\frac{H_m^{ab}}{H_{m+i}^{ab}}}(x_{m+i})=0 $ in $R[H^{der}  \backslash G/K]$.
 \end{thm}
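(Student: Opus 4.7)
The plan is to derive the claim by pushing the identity from the previous lemma through the natural projection $\pi: R[G/K] \to R[H^{der}\backslash G/K]$ and rewriting the resulting trace in terms of the abelian quotient. Since $H^{der}$ acts on the left of $G/K$ and the Hecke algebra on the right, the two actions commute, so $\pi$ is right $He$-linear; pushing $\sum_i A_i Tr_{H_m/H_{m+i}}(x_{m+i}) = 0$ through $\pi$ yields $\sum_i A_i \cdot \pi(Tr_{H_m/H_{m+i}}(x_{m+i})) = 0$ in $R[H^{der}\backslash G/K]$.

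The substance is then to identify $\pi(Tr_{H_m/H_{m+i}}(x_{m+i}))$ with $c(m,i)\, Tr_{H_m^{ab}/H_{m+i}^{ab}}(x_{m+i})$. For this I would first note that because $H^{der}$ is normal in $H$, the left $H$-action on $H^{der}\backslash G/K$ factors through $H^{ab}$, so for $\delta \in H_m$ the element $\delta \cdot \pi(x_{m+i})$ depends only on the image of $\delta$ in $H_m^{ab}$. Next, the surjective group homomorphism $p: H_m/H_{m+i} \twoheadrightarrow H_m^{ab}/H_{m+i}^{ab}$ has fibers that are cosets of $\ker(p)$, so their common cardinality is precisely the $c(m,i)$ defined in the paper. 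Finally, to see that the right-hand trace is well-defined, one checks that $H_{m+i}^{ab}$ fixes $\pi(x_{m+i})$, which follows from the stronger fact that $H_{m+i} \subset G_{m+i} \subset \tau^{m+i} K \tau^{-(m+i)}$ fixes $x_{m+i} = [\tau^{m+i}]$ in $R[G/K]$.

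Combining these ingredients gives
$$\pi(Tr_{H_m/H_{m+i}}(x_{m+i})) = \sum_{\delta \in H_m/H_{m+i}} \delta \cdot \pi(x_{m+i}) = c(m,i) \sum_{\bar{\delta} \in H_m^{ab}/H_{m+i}^{ab}} \bar{\delta} \cdot \pi(x_{m+i}) = c(m,i)\, Tr_{H_m^{ab}/H_{m+i}^{ab}}(x_{m+i}),$$
and substituting this into the $\pi$-image of the previous lemma produces the stated identity (under the customary abuse of also writing $x_{m+i}$ for $\pi(x_{m+i})$). I do not anticipate a genuine obstacle: the argument is a clean quotient construction, and the real content is already packaged in Boumasmoud's seed relation and the comparison lemmas that express $U_{\mu^i}(x_m)$ as the trace $Tr_{H_m/H_{m+i}}(x_{m+i})$. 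The only slightly delicate bookkeeping point is the constancy of the fibers of $p$, which is automatic because $p$ is a homomorphism of finite groups.
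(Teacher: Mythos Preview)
Your proposal is correct and follows exactly the approach the paper takes: push the previous lemma through the projection $R[G/K]\to R[H^{der}\backslash G/K]$, use that the $H$-action on the quotient factors through $H^{ab}$, and collect the constant fiber size $c(m,i)$. One tiny imprecision: $H_{m+i}$ need not be normal in $H_m$, so $H_m/H_{m+i}$ is a priori only a coset space and $p$ is not literally a group homomorphism; nevertheless the fibers are the $H_{m+i}$-cosets inside $\ker(H_m\to H_m^{ab}/H_{m+i}^{ab})$, hence of constant size, so your conclusion stands.
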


  Finally we make a remark. For many spherical pairs $(\textbf{G},\textbf{H})$, we find that  $H_m=H \bigcap \tau^{m}K\tau^{-m}$. In other words, the $H$-stabilizer of $x_m \in G/K$  also stabilizes $x_0 \in G/K$. We conjecture that this holds in the general setup of this section (\textbf{stabilizer conjecture}) and verify it in many cases in appendix \ref{conjecture}. In particular, it holds for our main example. This observation will be used in next section \ref{normreal} for the calculation of $c(m,i)$.

 \subsection{Realization}
 \label{normreal}

We return  to the  global setting again and to the notations in section \ref{gloset}.

 Recall that we have chosen a special cycle $z=Z_K(g) \in R[Z_K(\textbf{G},\textbf{H})]$ and $z$ is defined over  the field $\underline{E}[1]$, which is the fixed field by $U^1(1)$. Choose a  prime $v$ of $F$ that doesn't belong to $S$ and $v \nmid disc(W), v \nmid disc(V)$ (discriminant of $W$ and $V$).

 First we verify the spherical condition over $v$ to apply previous section's theorems.

 Denote $\bO_{E_v}=\bO_E \otimes _{\bO_F} \bO_{F_v}$. Because $v$ is an unramified prime, there exists an element $\eta \in \bO_{E_v}^*$ such that $Tr(\eta)=\eta+\overline{\eta}=0$. Denote $(V_v,\phi_v)=(V,\phi)\otimes_{F}F_v$, $(W_v,\psi_v)=(W,\psi)\otimes_{F}F_v$. Because $v \nmid disc(W)$ and the unitary group $\underline{\textbf{H}}_{F_v}$ is unramified, there exists an orthogonal $E_v$-basis $\{w_1,...,w_n\}$ for $W_{F_v}=W \otimes_{F}F_v$, such that $\phi_{v}(w_i,w_i) \in \bO_{F_v}^*$. Suppose $V_v=W_v \perp F_v v_n$, we can assume $\phi_v(v_n,v_n) \in \bO_{F_v}^*$ due to $v \nmid disc(V)$. Because $Norm(\bO_{E_v}^*)=\bO_{F_v}^*$, we can rescale $\{w_1,...,w_n\}$ so that $\phi(w_n,w_n)+\phi(v_n,v_n)=0$, $\phi(w_i,w_i)+\phi(\eta w_{i+1},\eta w_{i+1})=0$ ($1\leq i \leq n-1$). Define $v_{i-1}=\eta w_{i}$ ($1\leq i \leq n$), we get an  orthogonal $F_v$-basis for $V_v$ \[\mathcal{B}_v=(v_0,w_1,v_1,w_2,...,w_n,v_n).\]
  Such basis is called special basis in Cornut's \cite{cornut2011} section 5.1.5. It defines two orthogonal decomposition of $V_v$, \[V_v=E_vw_1\perp ...\perp Ew_n\perp F_v v_n=F_v v_0 \perp H_1\perp...\perp H_n,\] where $H_i=F_v w_i \perp F_v v_i$ is an hyperbolic $F_v$-plane whose isotropic $F_v$-lines are spanned by $e_{\pm i}=\frac{1}{2}(v_i \pm w_i)$. Consider the ordered basis $(e_n,...,e_1,v_0,e_{-1},...,e_{-n})$, it defines an embedding $\underline{\textbf{G}}_{F_v} \hookrightarrow GL(V_v)_{F_v}$ and defines a Borel pair $(\underline{\textbf{Ts}}_v,\underline{\textbf{B}}_v)$ for $\underline{\textbf{G}}_{F_v}$, here $\underline{\textbf{B}}_v$  corresponds to upper triangular matrices. Consider the $\bO_{F_v}$-lattice $L_v$ spanned by the above basis, it is a self-dual lattice and extends our embedding over $\spec(\bO_{F_v})$,  $SO(L_v)\hookrightarrow GL(L_v)$. Through this embedding, we get a reductive integral model $\underline{\mathcal{G}}_v$ for $\underline{\textbf{G}}_{F_v}$ with the associated  Borel pair $(\underline{\mathfrak{Ts}}_v, \underline{\mathfrak{B}}_v)$. Consider the $\bO_{E_v}$-lattice $\widetilde{L}_w$ spanned by $(w_1,...,w_n)$, this is a self-dual lattice and extends the embedding $\underline{\textbf{H}}_{F_v} \hookrightarrow \underline{\textbf{G}}_{F_v}$ over $\spec(\bO_{F_v})$, $U(\widetilde{L}_w) \hookrightarrow SO(L_v)$, and we get a reductive integral model $\underline{\mathcal{H}}_v$ for $\underline{\textbf{H}}_{F_v}$. It is easy to check $\underline{\mathcal{H}}_v \bigcap \overline{\underline{\mathfrak{B}}_v}=1$ ($\overline{\underline{\mathfrak{B}}_v}$ is the Borel group opposite to $\underline{\mathfrak{B}}_v$ with respect to $\underline{\mathfrak{Ts}}_v$) and then $(\underline{\mathcal{H}}_v, \underline{\mathcal{G}}_v, \overline{\underline{\mathfrak{B}}_v})$ satisfies the spherical condition by dimension reason.

  Here we make some remarks.  In fact, since the intersection of $\underline{\mathcal{H}}_v$ and  $\overline{\underline{\mathfrak{B}}_v}$ is trivial,  the action of $\underline{\mathcal{H}}_v(\bO_{F_v})$ on the open orbit of $[1]$ in $\underline{\mathcal{G}}_v/\overline{\underline{\mathfrak{B}}_v} (\bO_{F_v})$ is transitive. In other words, any Borel group $\widetilde{B}_v \subset \underline{\mathcal{G}}_v$ with $Lie(\underline{\mathcal{H}}_v)+Lie(\widetilde{B}_v)=Lie(\underline{\mathcal{G}}_v)$ can be constructed as above.

 Now we can apply previous section's results. Take a uniformizer $\pi_v$ for $\bO_{F_v}$ and choose a strict $\underline{\mathfrak{B}}_v$-dominant cocharacter $\mu_v \in X_* ^+(\underline{\mathfrak{Ts}}_v)$ in the form of $(s_n,...,s_1,0,-s_1,...,-s_n)$ where $s_i$ are integers with $0<s_1<...<s_n$.  Still define $G_v=\underline{\textbf{G}}(F_v)$ with the hyperspecial group $K_v=\underline{\mathcal{G}}(\bO_{F_v})$.  Notice that we already chose a global level group $K$ with $v$-component "$K_v$". We can require these two notations defines the same group to avoid notation confusion. This can be done either by modifying the  $v$-component of the global level group $K$ or by changing our local integral model. Such slight change influences nothing. Let $\tau=\mu_v(\pi_v)$ and define $x_m=[\tau^m]=1_{\tau^mK_v} \in G_v/K_v$ for any non-negative integer $m$. Apply theorem \ref{absnormrel}, we get the following relation
 \begin{equation}
 \label{middlenorm}
 \sum_i c(m,i)A_i Tr_{\frac{\underline{\textbf{T}}^{1}_{v,m}}{\underline{\textbf{T}}^{1}_{v,m+i}}}(x_{m+i})=0 \in R[H_v^{der}  \backslash G_v/K_v],
 \end{equation}
 here $H_v^{der}=\underline{\textbf{H}}^{der}_{v}(F_v)$ and $\underline{\textbf{T}}^1_{v,m}=\underline{\textbf{H}}_{v,m}^{ab}$ with other notations defined in the previous section.

 Now we describe these constants $c(m,i)$. We will apply Cornut's explicit computation in \cite{cornut2011}. That paper works under inert assumption therefore from now on we assume $v$ to be inert. By Cornut's computation in \cite{cornut2011} section 5.1.14, for any non-negative integer $m$, we have $\underline{\textbf{T}}^1_v(m)=\det(\underline{\textbf{H}}_{v,m}^{ab})=U_v^1(ms_1)$, here $U_v^1(ms_1)$ is the conductor filtration defined in global setting \ref{gloset}. For the benefit of readers, we make some remarks about this computation. The case $m=0$  is trivial, we only need to care about $m>0$.  We use similar notation (special basis etc) as Cornut's \cite{cornut2011}. The related computation can be found in \cite{cornut2011} section 5.1.5-5.1.14. He dealt with more general cases there but we only need to use a very good case. The elements $x_m$ correspond to hyperspecial vertices in the Bruhat-Tits building of $\underline{\textbf{G}}_{F_v}$. Cornut implicitly described this building via self-dual norms on the orthogonal space $(V_v,\psi_v)$. There he studied the conductor of the stabilizer of $x_m$ in $\underline{\textbf{H}}_{F_v}(F_v)$. By the stabilizer conjecture (see appendix \ref{conjecture}), this group is exactly the level group for   $\underline{\textbf{H}}_{F_v}(F_v)$ that we defined in previous section. In his notations, our element $x_m$ will correspond to a norm $\alpha$ determined by the $n$-tuple $(ms_1,...,ms_n)$. Because $0<ms_1<...<ms_n$, this $n$-tuple satisfies his condition $(\textbf{SP})$ (see \cite{cornut2011} section 5.1.6). And this $n$-tuple determines another $n$-tuple $(c_1,...,c_n) \in \mathbb{Z}^n$, $c_1=s_1$, $s_i=s_i+s_{i-1}$ ($2\leq i \leq n$). By \cite{cornut2011} section 5.1.9, we have $\omega(\alpha)=[s_1,...,s_n]_D$. Then the lemma in \cite{cornut2011} section 5.1.14 tells us $\det U(\alpha)=U_{r}$ (in his notation) with $r=c_1=ms_1$. In our notations, this group is exactly $U_v^1(ms_1)$.

 For any positive integers $m$ and $i$, recall we have a natural identification  $\underline{\textbf{H}}_{m,v}/\underline{\textbf{H}}_{m+i,v}=N_{m,v}/N_{m+i,v}$, here $N_{m,v}=\tau^{m}\mathcal{N}_v(\bO_{F_{v}})\tau^{-m}$ ($\mathcal{N}_v$ is the unipotent radical of $\underline{\mathfrak{B}}_v$).  Then its  cardinality (denote it by $y(m,i)$) only depends on $i$. Moreover, using the root group map for $\mathcal{N}_v$, we can also compute it explicitly. It is $ q_{F_v}^{\langle i \mu_v, 2\rho_v \rangle}$, here $q_{F_v}$ is the cardinality for the residue field $\bO_{F_v}/\pi_v$ and $\rho_v$ is the half sum of positive root of $\underline{\mathcal{G}}_v$. Now for each $m$, define the conductor $con(m)$ to be the smallest non-negative integer $cc$ with $\det(\underline{\textbf{H}}_{m,v})\supset U_v^1(cc)$. Here by the above computation, we already know $con(m)=ms_1$. We thus obtain the following  explicit formula for $c(m,i)$, \[c(m,i)=q_{F_v}^{i(\langle \mu_v,2\rho_v\rangle - s_1)}=q_{F_v}^{i(\sum_{j=1}^{n}(2n-2j+1)s_{n+1-j}-s_1)}.\] In fact, the key point is that our main example satisfies the following \textbf{conditions}:

 $\clubsuit$ For large enough integer $m$, the numbers $c(m,1)$ are constant.

 $\heartsuit$  The conductor $con(m)$ grows into infinity.

 We make some remarks. The condition $\heartsuit$ will cut out a non-trivial $p$-adic (suppose $v|p$) extension for our resulting special cycles. We have a general strategy to verify this condition, avoiding explicit calculation. For positive integer $m$, the image of  $G_{v,m}$ in $\underline{\mathcal{G}}_v(\bO_{F_v}/\pi_v^m)$  lies in $ \overline{\underline{\mathfrak{B}}_v}(\bO_{F_v}/\pi_v^{m})$ after  modulo $\pi_v^m$. Because we know $\underline{\mathcal{H}}_v \bigcap \overline{\underline{\mathfrak{B}}_v}=1$ ("small intersection"),   it follows that the image of $H_{v,m}$ in $\underline{\mathcal{G}}_v(\bO_{F_v}/\pi_v^m)$ is trivial.  Then its abelian quotient lies in $U_v^1(m)$, $con(m) \geq m$ and the condition $\heartsuit$ is satisfied. For general Shimura pair $Sh({\widetilde{H}})\longrightarrow Sh(\widetilde{G})$ we may not have such trivial intersection property, but we can use a similar argument, see $Gsp_4$ example in section \ref{Gsp4} and the similitude version unitary GGP pair in section \ref{unitary}. The condition $\clubsuit$ is needed to turn abstract norm relations into a norm compatible family (under ordinary condition). At present the author doesn't know how to verify it formally without explicit computation.  Here the large enough assumption on $m$ is a very mild assumption in the study of norm relations. It will simplify the study about $c(m,1)$ under condition $\heartsuit$. 

Due to condition $\clubsuit$, we define $C_1=c(m,1)$ and we have $C_1^{i}=c(m,i)$  and denote it by $C_i$. Now we construct a family of special cycles. Write the base cycle $z=z^v \otimes 1_v$ and  for any non-negative integer $m$, we define $z_{cy(m)}=z^v \otimes [\tau^m]$, $U^1(cg(m))=U^{1,v} \times \underline{\textbf{T}}^1_{v,m}=U^{1,v} \times U^1_v(ms_1)=U^1((v)^{ms_1})$ and denote the corresponding fixed field by $\widetilde{E}(cg(m))$. We have the following realization:

\begin{thm}{\textbf{norm relation}}

We have $z_{cy(0)}=z$. And for $m>0$, the cycle $z_{cy(m)}$ lies in the field $\widetilde{E}(cg(m))$, they satisfy \[\sum_i C_i A_i Tr_{\frac{\widetilde{E}(cg(m+i))}{\widetilde{E}(cg(m))}}(z_{cy(m+i)})=0.\]
\end{thm}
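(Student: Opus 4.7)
The plan is to propagate the local abstract identity (\ref{middlenorm}) through the restricted-tensor-product map
\[
{\bigotimes_{w}}' R[\underline{\textbf{H}}^{der}(F_w)\backslash \underline{\textbf{G}}(F_w)/K_w] \longrightarrow R[Z_K(\textbf{G},\textbf{H})]
\]
introduced in Section \ref{gloset}, and then to convert the resulting local trace sums into Galois traces using Deligne's reciprocity law and Lemma \ref{locglo}.

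The identity $z_{cy(0)} = z^v \otimes 1_v = z$ is immediate from the definition. For the rationality assertion, I would analyze the $\textbf{H}(\mathbb{A}_f)$-stabilizer of $z_{cy(m)}$ modulo $\textbf{H}^{der}(\mathbb{A}_f)$: its away-from-$v$ part cuts out the factor $U^{1,v}$ by construction, while its $v$-component is the image under $\det$ of the $\underline{\textbf{H}}(F_v)$-stabilizer of $[\tau^m] \in G_v/K_v$. By the stabilizer conjecture verified in the appendix for this example, that stabilizer is precisely $\underline{\textbf{H}}_{v,m}$, whose determinant equals $U_v^1(ms_1)$ by Cornut's special-basis computation recalled above. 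Hence the full $\det$-stabilizer is $U^1(cg(m))$, and Deligne's reciprocity law then shows $z_{cy(m)}$ is defined over $\widetilde{E}(cg(m))$.

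Next, I would tensor the local relation (\ref{middlenorm}) with the fixed away-from-$v$ element $z^v$ and push it forward through the map above. The result is an identity
\[
\sum_i c(m,i)\, A_i\, T_i = 0
\]
in $R[Z_K(\textbf{G},\textbf{H})]$, where $T_i$ is the sum of the translates of $z_{cy(m+i)}$ by a set of coset representatives of $\underline{\textbf{T}}^1_{v,m}/\underline{\textbf{T}}^1_{v,m+i}$ acting through $\underline{\textbf{H}}^{ab}(F_v)$ on the $v$-component. Iterating Lemma \ref{locglo} between the conductor levels $ms_1$ and $(m+i)s_1$ provides an isomorphism $\underline{\textbf{T}}^1_{v,m}/\underline{\textbf{T}}^1_{v,m+i} \cong Gal(\widetilde{E}(cg(m+i))/\widetilde{E}(cg(m)))$ via the Artin map; combined with the reciprocity formula $\rho(Z_K(g)) = Z_K(sg)$ (for $\text{Art}_E^1(\det s) = \rho|_{E[\infty]}$), this identifies $T_i$ with the Galois trace $Tr_{\widetilde{E}(cg(m+i))/\widetilde{E}(cg(m))}(z_{cy(m+i)})$. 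Invoking condition $\clubsuit$ to replace $c(m,i)$ with $C_i = C_1^i$ for $m$ large enough completes the argument.

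The main point demanding care is the matching of the local coset sum $T_i$ with the Galois trace: one must confirm that the local representatives, viewed inside $\textbf{T}^1(\mathbb{A}_f)$ through the inclusion $\underline{\textbf{T}}^1(F_v) \hookrightarrow \textbf{T}^1(\mathbb{A}_f)$, land bijectively in $Gal(\widetilde{E}(cg(m+i))/\widetilde{E}(cg(m)))$ under the Artin map without redundancy or defect. This boils down to the injectivity inherent in the iterated local–global lemma and the surjectivity of $\det : \textbf{H}(R) \to \textbf{T}^1(R)$ used in Section \ref{gloset}; no other serious obstacle is anticipated, since the combinatorial and group-theoretic work has already been absorbed into Theorem \ref{absnormrel} and into the conductor calculation $\det(\underline{\textbf{H}}_{v,m}^{ab}) = U_v^1(ms_1)$.
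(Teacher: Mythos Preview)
Your proposal is correct and follows essentially the same route as the paper's proof: push the local identity (\ref{middlenorm}) through the restricted tensor product, then invoke the local--global identification $\underline{\textbf{T}}^1_{v,m}/\underline{\textbf{T}}^1_{v,m+i} \cong Gal(\widetilde{E}(cg(m+i))/\widetilde{E}(cg(m)))$ (the norm-relation analogue of Lemma \ref{locglo}) to convert the local trace into a Galois trace. The paper's own proof is much terser and simply declares the first two statements ``obvious'' and the rest a direct translation; you have spelled out more of the stabilizer and conductor verifications, but nothing in your argument deviates from the intended strategy. One minor remark: in this specific example the explicit formula $c(m,i)=q_{F_v}^{i(\langle \mu_v,2\rho_v\rangle - s_1)}$ is already independent of $m$ for all $m>0$, so your caveat ``for $m$ large enough'' is unnecessary here.
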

\begin{proof}

This is  a translation of above results.

 The first two statements are obvious. Notice  that $U^1(1) \bigcap \textbf{T}^1(\mathbb{Q})=1$, similar to tame relation case (lemma \ref{locglo}), we also have a local-global connection in norm case,  \[Gal(\widetilde{E}(cg(m+i))/ \widetilde{E}(cg(m)))= \frac{\underline{\textbf{T}}^1_{v,m}}{\underline{\textbf{T}}^1_{v,m+i}}.\]

 Now apply the above relation \ref{middlenorm}, we're done.
\end{proof}

    This norm relation can be used to get a norm compatible family (under ordinary condition). The following technique is standard. For example, in Heegner point case, see \cite{nek} section 1.5.

    We will state this common technique as a lemma. Fix an embedding $\iota_p:\overline{\mathbb{Q}}\hookrightarrow \overline{\mathbb{Q}_p}$. Suppose we have a polynomial $Pol \in \overline{\mathbb{Q}}[X]$ and denote the completion of its splitting field through $\iota_p$ by $PF$. Denote the integer ring of $PF$ by $\bO$ with a uniformizer $\pi$ and  suppose $Pol$ has a root $b \in \bO^*$ (\textbf{ordinary condition}). Suppose it decomposes as \[Pol=\sum_{i=0} ^{k} e_i X^{i}= (X-b)(\sum_{i=0}^{k-1}b_i X^{i}).\] Now suppose the Galois group $Gal_{E}$ acts on a $PF$-space $V_M$ continuously ($p$-adic Galois representation) with a Galois stable $\bO$-lattice $M$. Consider another Galois lattice $M^{'}=\frac{1}{\pi^{ck}}M$, here $ck$ is a suitable integer such that $b_i(M)\subset M^{'}$ and $e_i(M) \subset M^{'}$. Now for simplicity denote the Galois trace $Tr_{\frac{\widetilde{E}(cg(l))}{\widetilde{E}(cg(r))}}$ as $Tr_{l,r}$, then we have the following lemma:

    \begin{lem}{\textbf{translation}}

    Assume for any integer $m>M_0$ ($M_0$ is fixed), there are elements $\mathcal{Y}_{m} \in H^1(Gal_{\widetilde{E}(cg(m))}, M)$  such that  $\displaystyle \sum_{i=0}^{k}e_iTr_{m+i,m}(\mathcal{Y}_m)=0 \in H^1(Gal_{\widetilde{E}(cg(m))},M^{'})$, then take $\displaystyle \mathcal{X}_m=b^{-m}\sum_{i=0}^{k-1}b_iTr_{m+i,m}(\mathcal{Y}_{m+i})$ in the later group. We have $Tr_{m+1,m}(\mathcal{X}_{m+1})=\mathcal{X}_m$.
    \end{lem}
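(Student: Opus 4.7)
The plan is to verify the norm compatibility $Tr_{m+1,m}(\mathcal{X}_{m+1})=\mathcal{X}_m$ by a direct manipulation that rests on three ingredients: transitivity of the Galois trace in a tower, the factorization $Pol(X)=(X-b)\sum_{i=0}^{k-1}b_iX^i$ read off as an identity among coefficients, and the ordinary hypothesis $b\in \mathcal{O}^{\times}$ (so $b^{-m}$ causes no denominator issues). (I read the hypothesis as $\sum_{i=0}^{k}e_i\,Tr_{m+i,m}(\mathcal{Y}_{m+i})=0$, since Galois trace is induced from a larger field to a smaller one; this matches the norm relation proved just above.)

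First I would expand, using $Tr_{m+1,m}\circ Tr_{m+1+i,m+1}=Tr_{m+1+i,m}$,
\begin{equation*}
Tr_{m+1,m}(\mathcal{X}_{m+1})=b^{-(m+1)}\sum_{i=0}^{k-1}b_i\,Tr_{m+1+i,m}(\mathcal{Y}_{m+1+i}),
\end{equation*}
and then reindex $j=i+1$ so that the sum runs $j=1,\dots,k$ over $b_{j-1}Tr_{m+j,m}(\mathcal{Y}_{m+j})$.

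Next, comparing coefficients in $Pol(X)=(X-b)\sum_{i=0}^{k-1}b_iX^i$ gives
\begin{equation*}
\sum_{j=1}^{k}b_{j-1}X^{j}=b\sum_{i=0}^{k-1}b_iX^{i}+\sum_{i=0}^{k}e_iX^{i}.
\end{equation*}
Substituting the operator $Tr_{m+\bullet,m}(\mathcal{Y}_{m+\bullet})$ for $X^{\bullet}$, the last sum vanishes in $M'$ by hypothesis, so
\begin{equation*}
\sum_{j=1}^{k}b_{j-1}\,Tr_{m+j,m}(\mathcal{Y}_{m+j})=b\sum_{i=0}^{k-1}b_i\,Tr_{m+i,m}(\mathcal{Y}_{m+i})\quad\text{in }H^{1}(Gal_{\widetilde{E}(cg(m))},M').
\end{equation*}
Plugging this into the expression above for $Tr_{m+1,m}(\mathcal{X}_{m+1})$, the prefactor becomes $b^{-(m+1)}\cdot b=b^{-m}$, which yields exactly $\mathcal{X}_m$.

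The only thing that needs genuine care is that the manipulations take place in $H^{1}(Gal_{\widetilde{E}(cg(m))},M')$ rather than $M$: one has to check that each of $b_i\,Tr_{m+i,m}(\mathcal{Y}_{m+i})$ and $e_i\,Tr_{m+i,m}(\mathcal{Y}_{m+i})$ lands in $M'$, which is exactly why $M'=\pi^{-ck}M$ is chosen with $ck$ large enough to absorb the action of the $b_i$ and $e_i$; and that $b^{-1}$ is legitimate, which is the ordinary condition. Neither point is an obstacle, since the polynomial identity above is formal and the proof reduces to bookkeeping with traces. There is essentially no hard step: the only mildly subtle ingredient is recognising that the factorization of $Pol$ is what causes the telescoping, and that using the \emph{unit} root $b$ (rather than an arbitrary root) is what produces a genuinely norm-compatible system indexed by $m$.
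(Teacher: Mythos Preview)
Your proof is correct and follows essentially the same route as the paper: both arguments use transitivity of the trace, the coefficient identity coming from $Pol(X)=(X-b)\sum b_iX^i$, and the vanishing hypothesis, differing only in that the paper computes the difference $Tr_{m+1,m}(b^{m+1}\mathcal{X}_{m+1})-b\cdot b^m\mathcal{X}_m$ and shows it equals $\sum_i e_i Tr_{m+i,m}(\mathcal{Y}_{m+i})=0$, whereas you compute $Tr_{m+1,m}(\mathcal{X}_{m+1})$ directly. Your reading of the hypothesis with $\mathcal{Y}_{m+i}$ in place of $\mathcal{Y}_m$ is also the intended one; the paper's own proof uses it in exactly that form.
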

    \begin{proof}
    This is a routine check. Notice \[Tr_{m+1,m}(b^{m+1}\mathcal{X}_{m+1})-b(b^{m}\mathcal{X}_m)=  \sum_{i=0}^{k-1}b_iTr_{m+i+1,m}(\mathcal{Y}_{m+1+i})-b\sum_{i=0}^{k-1}b_iTr_{m+i,m}(\mathcal{Y}_{m+i})\] \[=\sum_{i=0}^{k}e_i Tr_{m+i,m}(\mathcal{Y}_m)=0.\]   Because $b \in \bO^*$, we're done.
    \end{proof}

    Finally we make some remarks. In application of this lemma, we will use cohomology theory. See section \ref{coh} for more details. The rational $p$-coefficient etale cohomology  of our ambient Shimura variety $Sh_\textbf{G}(K)$ will provide the $p$-adic Galois representation $V_M$ with commutative Hecke action and its integral $p$-adic etale cohomology will provide the Galois-Hecke stable lattice $M$. Because the $v$-component $K_v$ of $K$ is hyperspecial, the associated automorphic representation will be unramified at $v$ and the local Hecke algebra over $v$ will acts on $V_M$ as scalar, determined by Satake parameters. And these scalars for our Hecke operator $A_i$ belong to $\overline{\mathbb{Q}}$ due to strong $C$-arithmetic properties (much stronger than what we need), see \cite{shin} Proposition 2.15 for more details. In many good cases, we can also expect these numbers belong to $\bO$ (integrality properties). Then we can just take $M^{'}=M$, i.e. we do not need to rescale the lattice. The next step is to  use $p$-adic Abel-Jacobi map to realize our cycles as elements in the Galois cohomology group and apply the above lemma. Here we first need to make these cycles cohomological trivial, again we refer to section \ref{coh} for more details.

\section{Other examples and further development}

\subsection{$Gsp_4$ example}
\label{Gsp4}

We will consider the following  embedding:

 $\textbf{H}=GU(1)\times_{G_m}GL_2 \longrightarrow \textbf{G}=Gsp_4$ (where $GU(1)=\mathrm{Res}_{E/\mathbb{Q}} \mathbb{G}_{m,E}$ and $E$ is an imaginary quadratic field).

Consider the 4-dim $\mathbb{Q}$-space $V$ with a symplectic form $J$, we choose a suitable ordered basis $(e_1,e_2,e_3,e_4)$ for $V$ so that $J$ corresponds to the matrix \[\begin{pmatrix}
0  & 0 & 0 & 1\\
0 & 0 & 1 & 0\\
0 & -1 & 0 & 0\\
-1 & 0 & 0 & 0
\end{pmatrix}.\]
For simplicity, we will use $E=\mathbb{Q}(\sqrt{-1})$ as an example, the general case is similar. Then $GU(1)$ can be embedded into $GL_2$, sending $a+b\sqrt{-1}$ to matrix $\left(\begin{smallmatrix}
a & b\\
-b & a
\end{smallmatrix}\right)$.

Now we can embed \textbf{H} into \textbf{G}, it will correspond to those matrices (where $a^2+b^2=xw-zy$) inside $Gsp_4$:

\[\begin{pmatrix}
a  & 0 & 0 & b\\
0 & x & y & 0\\
0 & z & w & 0\\
-b & 0 & 0 & a
\end{pmatrix}.\]

In fact, this group embedding has a factorization $\textbf{H}\longrightarrow \textbf{M}=GL_2 \times_{\mathbb{G}_m} GL_2 \longrightarrow \textbf{G}$. The embedding $\textbf{M}\longrightarrow \textbf{G}$ is obtained by replacing $\left(\begin{smallmatrix}
a & b\\
-b & a
\end{smallmatrix}\right)$ with $\left(\begin{smallmatrix}
l & p\\
q & r
\end{smallmatrix}\right)$ in the above expression.

These group maps will induce maps between Shimura varieties. Recall the  torus $GU(1)$ defines a Shimura variety, the map from Deligne torus $\mathrm{Res}_{\mathbb{C}/\mathbb{R}}\mathbb{G}_m(\mathbb{R})$ to $GU(1)(\mathbb{R})$  is an identification, sending $x+y\sqrt{-1}$ to $x+y\sqrt{-1}$ ($x,y \in \mathbb{R}$). And  the inclusion $GU(1) \longrightarrow GL_2$ will induce a map between their Shimura varieties. This is the usual map defining a  CM point inside the modular curve. Similarly, we have maps $Sh(\textbf{H})\longrightarrow  Sh(\textbf{M})\longrightarrow Sh(\textbf{G})$.

Loeffler, Skinner and Zerbes  have used the embedding $GL_2\times_{\mathbb{G}_m} GL_2 \longrightarrow Gsp_4$ to construct an Euler system (see \cite{loeffler2017}). The first embedding  $\textbf{H} \longrightarrow \textbf{M}$ can be seen as a kind of  "base change" of Heegner points. Roughly speaking, we can think of $Sh(\textbf{H})$ as a family of Heegner points or product of Heegner points with the modular curve.

What's more, for $E=\mathbb{Q}(i)$, this pair can seen as a kind of lift for our pair $(U(1,1),SO(3,2))$. Consider the standard representation $V$ for $Gsp_4$  and let  $\rho$ be the induced  representation of $Gsp_4$ on the 6-dimensional space $\wedge ^2 V$. Twist $\rho$ by $v^{-1}$, where $v$ is the standard 1-dimensional character defining $Gsp_4$, $J(g(x),g(y))=v(g)J(x,y)$. There is a  quadratic form on $\wedge ^2 V$ defined by the wedge product $\wedge ^2 V \times \wedge^2 V \longrightarrow \wedge^4 V \cong \mathbb{Q}$. Then the representation $\rho v^{-1}$ is an orthogonal representation for $Gsp_4$. And it fixes the line generated by $e_1 \wedge e_4+e_2 \wedge e_3$. Then the orthogonal complement $L$ for this line will provide a 5-dimensional orthogonal representation of $Gsp_4$. Through this way, we get a map $Gsp_4 \longrightarrow SO(3,2)$. Its kernel is $\mathbb{G}_m=Z(Gsp_4)$ (center of $Gsp_4$).

Then we need to construct a 2-dimensional $E$-hermitian space inside this 5-dimensional quadratic $\mathbb{Q}$-space. This is equivalent to define a "complex" structure (linear map $S$ with $S^2=-1$) compatible with the quadratic form ($S$ is orthogonal) on a suitable 4-dimensional subspace $W$. We choose $W$ to be the space generated by $\{e_1 \wedge e_2, e_2 \wedge e_4, e_3 \wedge e_4, e_1 \wedge e_3\}$.

Consider the following central element $S$ of $\textbf{H} \subset \textbf{G}$:
\[\begin{pmatrix}
0 & 0 & 0 & -1\\
0 & -1 & 0 & 0\\
0 & 0 & -1 & 0\\
1 & 0 & 0 & 0
\end{pmatrix}.\] Then $S$ also acts on $\wedge^2 V$ and we can check that  $S$ stabilizes   $W$ and induces the desired hermitian structure.

Consider the conjugation of $S$ on $Gsp_4$ (this is a four-order automorphism), its fixed subgroup is exactly $\textbf{H}=GU(1)\times_{\mathbb{G}_m} GL_2$. Moreover, here is an additional interesting fact. The fixed subgroup for conjugation by $S^2$ is exactly $\textbf{M}=GL_2 \times_{\mathbb{G}_m} GL_2$. Therefore we get two symmetric pairs, $\textbf{M}\longrightarrow \textbf{G}$ and $\textbf{H}\longrightarrow \textbf{M}$. Because \textbf{H} commutes with $S$, thus its action on $\wedge^2 V$ also commutes with $S$. Then $W$ is an $\textbf{H}$ representation, and we get the map $\textbf{H}\longrightarrow U(W)$. This is a surjection with kernel  $\mathbb{G}_m$. We have the following commutative diagram:

\begin{diagram}
\textbf{H}=GU(1) \times_{\mathbb{G}_m} GL_2 & \longrightarrow & \textbf{G}=Gsp_4\\
\downarrow &  & \downarrow\\
U(1,1) & \longrightarrow & SO(3,2)
\end{diagram}

Moreover, it induces  maps between Shimura datum, thus we can think of $Sh(\textbf{H}) \longrightarrow Sh(\textbf{G})$ as a kind of lift for $Sh(U(1,1))\longrightarrow Sh(SO(3,2))$. This lift allows us to replace the abelian type Shimura variety $SO(3,2)$ by the much more familiar Siegel threefold, and thus opens the way to a study of these special cycles  using PEL type moduli spaces.

Now we  study this pair in the framework of this paper.

We first compute the stabilizer involved in the parametrization lemma \ref{para}.  It is well known that the associated Hermitian symmetric domain for the Siegel modular variety is the Siegel half-space. More precisely, we can  rearrange the chosen ordered basis $(e_1,e_2,e_3,e_4)$ into $(e_1,e_2,e_4,e_3)$ so that the symplectic form $J$ is $\left(\begin{smallmatrix}
0 & I_2 \\
-I_2 & 0
\end{smallmatrix}\right)$. Then $Gsp_4(\mathbb{R})$  will acts on the Siegel  half-space $\mathbb{H}_2=\{Z \in M_2(\mathbb{C}),Z^{T}=Z, Im(Z)>0\}$,  the matrix $\left(\begin{smallmatrix}
A &  B\\
C & D
\end{smallmatrix}\right)$ sends $Z$ into $(AZ+B)(CZ+D)^{-1}$. This is the hermitian symmetric domain $X$ for $Sh(\textbf{G})$. The hermitian symmetric domain $Y$ for $Sh(\textbf{H})$ is the positive half plane, it is already connected (unlike $GL_2$ case). Inside $X$, $Y$ will correspond to $\left(\begin{smallmatrix}
\sqrt{-1} & 0 \\
0 & z
\end{smallmatrix}\right)$, where $z=a+b\sqrt{-1}$($b>0$) lies in the positive half plane. A direct argument  shows that $Stab_{\textbf{G}(\mathbb{Q})}(Y)=\textbf{H}(\mathbb{Q})$. Choosing a suitable neat level group $K\subset \textbf{G}(\mathbb{A}_f)$, we obtain a similar parametrization:
\[Z_K(\textbf{G},\textbf{H})=\textbf{H}(\mathbb{Q}) \backslash \textbf{G}(\mathbb{A}_f)/K.\]

Now let's consider the reciprocity law for $\pi_0(Sh(\textbf{H}))$. The derived subgroup $\textbf{H}^{der}=SL_2$ is simply connected, and the abelian quotient map is $pr:\textbf{H}=GU(1)\times_{\mathbb{G},m} GL_2\longrightarrow\textbf{H}^{ab}= GU(1)$. Then $E$ is the common reflex field for $Sh(\textbf{H})$ and $Sh(\textbf{H}^{ab})$ and the reciprocity law for $Sh(\textbf{H}^{ab})$ is the identity map, $\mathrm{Res}_{E/\mathbb{Q}}\mathbb{G}_m \overset{=}{\longrightarrow} GU(1)$. As in our main example, we also have $\overline{\textbf{H}(\mathbb{Q})}=\textbf{H}(\mathbb{Q})\textbf{H}^{der}(\mathbb{A}_f)$ inside $\textbf{H}(\mathbb{A}_f)$. Here we use a special property about imaginary quadratic field,  $\textbf{H}^{ab}(\mathbb{Q})$ is closed (in fact discrete) in $\textbf{H}^{ab}(\mathbb{A}_f)$, this is not true for more general CM fields.

Now everything is similar to our main example, we have the Hecke action on the right, and the Galois action through left multiplication by $\textbf{H}(\mathbb{A}_f)$ via reciprocity law. Writing $Z_K(\textbf{G},\textbf{H})=\textbf{H}(\mathbb{Q})\textbf{H}^{der}(\mathbb{A}_f) \backslash \textbf{G}(\mathbb{A}_f)/K$, we also have the following natural map: \[\bigotimes_{p} {^{'}}\mathbb{Z}[\textbf{H}^{der}(\mathbb{Q}_p)\backslash \textbf{G}(\mathbb{Q}_p)/K_p]\longrightarrow \mathbb{Z}[Z_K(\textbf{G},\textbf{H})].\]  Therefore we can apply our general strategy to this pair now.

 We first introduce a one dimensional torus quotient of $\textbf{H}^{ab}$ to define conductor filtrations. Define $\textbf{T}^1=U(1)=\ker(GU(1)\overset{Norm}{\longrightarrow}\mathbb{G}_{m,\mathbb{Q}})$ and consider the map  $r:GU(1)\longrightarrow \textbf{T}^1$ over $\mathbb{Q}$ by sending $z$ to $\frac{z}{\overline{z}}$. Then for tame relations, over a split prime $p$, we will use this map $r(pr):\textbf{H}\longrightarrow \textbf{T}^1$ to define conductor filtrations and apply theorem \ref{abstame2} to get abstract tame relations. Follow the  arguments in section \ref{tamereal} (realization of tame relations), we can produce special cycles for $Sh_\textbf{G}(K)$  with the desired tame relations.

 For norm relations, the first task is to  construct s suitable Borel subgroup to establish the spherical condition for this pair.

Here we  work over the global field $\mathbb{Q}$ directly.

Recall we have chosen a suitable ordered basis $(e_1,e_2,e_3,e_4)$ so that J corresponds to \[\begin{pmatrix}
0  & 0 & 0 & 1\\
0 & 0 & 1 & 0\\
0 & -1 & 0 & 0\\
-1 & 0 & 0 & 0
\end{pmatrix}.\]Then consider a new ordered basis $(\gamma_4,\gamma_3,\gamma_2,\gamma_1)=(e_1+e_2,e_3-e_4,e_1-e_2,e_3+e_4)$. This ordered basis defines a Borel pair  $(\textbf{T},\overline{\textbf{B}})$ for $\textbf{G}$ over $\mathbb{Q}$. And we will use $\textbf{B}$ to denote the  Borel subgroup of $\textbf{G}$ opposite to $\overline{\textbf{B}}$ with respect to $\textbf{T}$.  We have $\textbf{H} \bigcap \overline{\textbf{B}}=\mathbb{G}_{m,\mathbb{Q}}=Z(\textbf{G})$. For all but finitely many primes $p$ for $\mathbb{Q}$, we can extend these groups over $\spec(\mathbb{Z}_p)$, and denote $(\mathcal{H}_p,\mathcal{G}_p, \overline{\mathcal{B}}_p, \mathcal{T}_p, \mathcal{B}_p)$ for the  integral models of $(\textbf{H}_{\mathbb{Q}_p},\textbf{G}_{\mathbb{Q}_p}, \overline{\textbf{B}}_{\mathbb{Q}_p},\textbf{T}_{\mathbb{Q}_p},\textbf{B}_{Q_p})$. Then $(\mathcal{H}_p,\mathcal{G}_p)$ is a spherical pair (with respect to $\overline{\mathcal{B}}_p$) by dimension reason.

 Recall our torus $\textbf{T}^1=U(1)$ over $\mathbb{Q}$, we will still use it to define conductors for norm relations. Take a good unramified odd prime $p$ so that we have a reductive integral model $\mathcal{T}^1_p$ for $\textbf{T}^1_{Q_p}$ and extend the quotient maps $\textbf{H}\overset{pr}{\longrightarrow} GU(1)\overset{r}{\longrightarrow} \textbf{T}^1$ over $\spec(\mathbb{Z}_p)$ (still denote these maps by $pr,r$). From now on, we will work over $\mathbb{Q}_p$. We will do some explicit matrix calculations first.

 Under this ordered basis $(\gamma_1, \gamma _2, \gamma _3, \gamma_4 )$, the Borel subgroup $\textbf{B}_p$ will correspond to upper triangular matrices.  Take a strictly $\mathcal{B}_p$-dominant cocharacter $\mu=(a_1,a_2,a_3,a_4)  \in X_*^+(\mathcal{T}_p)$(where $a_1>a_2>a_3>a_4$ and $a_1+a_4=a_2+a_3$). Take  $\tau=\mu(\pi)$, where $\pi$ is a uniformizer for $\mathbb{Q}_p$, for example  $\pi=p$). The elements of $\textbf{H}(\mathbb{Q}_p)$ given by \[\begin{pmatrix}
a  & 0 & 0 & b\\
0 & x & y & 0\\
0 & z & w & 0\\
-b & 0 & 0 & a
\end{pmatrix}\] under the ordered basis $(e_1,e_2,e_3,e_4)$  corresponds to the elements of $\textbf{G}(\mathbb{Q}_p)$ given by \[\frac{1}{2}\begin{pmatrix}
w+a & -b-z & w-a & z-b\\
b-y & a+x & -y-b & a-x\\
w-a & b-z & w+a & b+z\\
b+y & a-x & y-b & a+x
\end{pmatrix}\] under the ordered basis $(\gamma_1, \gamma _2, \gamma _3, \gamma_4)$.  Then it is easy to see that $\textbf{H}(\mathbb{Q}_p)\bigcap \tau^{m} \mathcal{G}_p(\mathbb{Z}_p) \tau^{-m}$ lies in $\mathcal{H}_p(\mathbb{Z}_p)$, in other words, the stabilizer conjecture holds.  Then  for any non-negative integer $m$, we define $H_{p,m}=\textbf{H}(\mathbb{Q}_p)\bigcap \tau^{m} \mathcal{G}_p(\mathbb{Z}_p) \tau^{-m}$ as in section \ref{absnorm}. The next task is to compute the numbers $c(m,i)$ involved in the abstract norm relations (theorem \ref{absnormrel}).

 Consider the conductor filtration on $\mathcal{T}^1_p(\mathbb{Z}_p)$. As usual, $T^1_p(m)=\{\frac{z}{\overline{z}},z \in \mathbb{Z}_p+\pi^m\bO_{E_p}\}$. And we similarly define conductors for $H_{p,m}$, $con(m)$ is defined to be the minimal non-negative integer $cc$ with $r(pr)(H_{p,m})\supset T^1_p(m)$.

 We first make a formal argument. Because the intersection $\textbf{H} \bigcap \overline{\textbf{B}}=\mathbb{G}_m$ lies in the kernel of $r(pr)$, the conductors $con(m)$ grows to infinity, just like in our main example. Thus condition $\heartsuit$ (in section \ref{normreal}) is satisfied.

 To check condition $\clubsuit$ (in section \ref{normreal}), we will explicit these numbers $c(m,i)$. As in our main example, this involves computing conductors.

 For any positive integer $m$, consider the elements in $H_{p,m}$, through their matrices under the ordered basis $(\gamma_1,\gamma_2,\gamma_3,\gamma_4)$. We find $v_p(a)=0$ and $v_p(b)\geq m(a_1-a_2)$. Thus $pr(H_{p,m})\subset W_m=\mathbb{Z}_p^*+\sqrt{-1}\pi^m\mathbb{Z}_p$. Let us now see that this is actually a surjection, i.e. $pr(H_{p,m})=W_m$.

 For any $a \in \mathbb{Z}_p^*$ and $b \in p^{m}\mathbb{Z}_p$, consider this element $x_{a,b} \in \textbf{H}(\mathbb{Q}_p) $ given by \[\begin{pmatrix}
 a &0 &0 & b\\
 0 &a &-b & 0\\
 0 &b &a  &0\\
 -b &0 &0 &a \end{pmatrix}\] under the ordered basis $(e_1,e_2,e_3,e_4)$. Computing $x_{a,b}$ in the ordered basis $(\gamma_1,\gamma_2,\gamma_3,\gamma_4)$, we find that $x_{a,b}\in H_{p,m}$.  Notice that $pr(x_{a,b})=a+b\sqrt{-1}$, thus $pr(H_{p,m})=W_m$.

 Then the conductor $con(m)$ equals  $m(a_1-a_2)$ and  condition $\clubsuit$ holds. And we have the following explicit formula for $c(m,i)$: \[c(m,i)=p^{i(2a_1+a_2-2a_3-a_4)}.\]
   Thus we can apply the same idea in section \ref{normreal} to construct special cycles with norm relations along the anticyclotomic extension.

 Finally we make a remark. We have another "dual" version embedding. We can swap factors to get $\overline{\textbf{H}}=GL_2 \times_{\mathbb{G}_m} GU(1)\longrightarrow Gsp_4$. Everything is similar. Moreover, the resulting family of special cycles is the same in fact. These two embedding are conjugated by an element $g \in Gsp_4(\mathbb{Q})$, where $g$ is \[\begin{pmatrix}
 0 & 1 & 0 & 0\\
 1 & 0 & 0 & 0\\
 0 & 0 & 0 & 1\\
 0 & 0 & 1 & 0
 \end{pmatrix}.\] Therefore $Z_K(\textbf{G},\textbf{H})$ is the same as $Z_K(\textbf{G},\overline{\textbf{H}})$. But we can't do such automorphic translation inside $Sh(\textbf{M})$ ($g$ doesn't belong to $\textbf{M}(\mathbb{Q})$). In this viewpoint, $Sh(Gsp_4)$ is a better ambient Shimura variety.

\subsection{Unitary GGP pair and some variants}
\label{unitary}

The usual unitary GGP pair $U(n)\longrightarrow U(n+1)\times U(n)$  has already been studied by Boumasmoud in his thesis (especially tame relations, see \cite{reda2019}). Therefore we won't repeat details and refer to his thesis.

The embedding is standard. Take a totally real field $F$, and an imaginary quadratic extension $F\longrightarrow E$. Let $W$ denote an $n$-dimensional $E$-hermitian space and consider the $n+1$-dimensional $E$-hermitian space $V$=$W\perp E{e_{n+1}}$. Then we have a natural embedding $U(W)\longrightarrow U(V)\times U(W)$ (the second factor map is identity). Taking Weil restriction, we get $\textbf{H}=\mathrm{Res}_{F/\mathbb{Q}} U(W)\longrightarrow \textbf{G}=\mathrm{Res}_{F/\mathbb{Q}}(U(V)\times U(W))$. And it will induce a morphism between Shimura datum $Sh(\textbf{H})\longrightarrow Sh(\textbf{G})$. Then we can apply our general method.

The parametrization (here the involved stabilizer is $\textbf{N}_\textbf{G}(\textbf{H})(Q)$, larger than $\textbf{H}(\mathbb{Q})$) and reciprocity law for geometric connected components can be found in Boumasmoud's thesis \cite{reda2019}. Over a split prime, similarly to our main example, we can reduce to the split reductive case, which corresponds to $GL_n\longrightarrow GL_{n+1}\times GL_n$. We just point out that the split assumption is enough to apply the general argument via root groups. We don't need to analyse $H \backslash G/K$ explicitly as in Boumasmoud's thesis.

For norm relations, the first thing is still to construct a suitable Borel subgroup. We will work over a split prime $v$ of $F$ and  the involved embedding becomes $(H_v,G_v)=(GL_n ,GL_{n+1}\times GL_n)$, after fixing isomorphisms $E\otimes_F F_v \cong F_v \times F_{\overline{v}}, U(W_{F_v})\cong GL(W_{F_v}), U(V_{F_v})\cong GL(V_{F_v})$ etc like in our main example (see section \ref{tamereal} for these isomorphisms). Here $H_v$ (resp. $G_v$) is the $v$-component of $\textbf{H}_{\mathbb{Q}_p}$ ($p$ is the underlying prime) (resp. $\textbf{G}_{\mathbb{Q}_p}$).

Choose an ordered basis $(e_1,...,e_n)$ for $W_{F_v}$  and $(v_1,...,v_n,v_{n+1})$ ($v_i=e_i,1\leq i \leq n+1$) for $V_{F_v}$. Then the embedding $GL_n\longrightarrow GL_{n+1}\times GL_n$ is given by sending the matrix $A$ to $\left(\begin{smallmatrix}
 A  &  \\
    & 1
\end{smallmatrix}\right)$ $\times A$.

The ordered basis $(e_n,...,e_1)$ for $W_{F_v}$ defines a  Borel pair $(T_2,B_2)$ for $GL(n)$ and we use $\overline{B_2}$ to denote the Borel subgroup of $GL(n)$ that is opposed to ${B_2}$ with respect to $T_2$. Consider the ordered basis $(v_1,...v_n,v_1+v_2+...+v_{n+1})$ for $V_{F_v}$, similarly we get a Borel pair $(T_1,B_1)$ for $GL(n+1)$ and the opposite Borel subgroup $\overline{B_1}$.

Consider the Borel pair  $(T,B)$=($T_1 \times T_2$, $B_1 \times B_2$) for $GL(n+1)\times GL(n)$ and the opposite Borel subgroup $\overline{B}=\overline{B_1} \times \overline{B_2}$.   Then the subgroup $H_v$ has trivial intersection with $\overline{B}$. Extending them to integral level, we can verify the spherical condition for $(\mathcal{H}_v,\mathcal{G}_v)$ (integral models of $(H_v,G_v)$) by a dimension argument.

As in the $Gsp_4$ example, we will use explicit matrix computations to calculate conductors.

  Set $v=(1,...,1)^T$($n$ elements), the elements of $H_v(F_v)$ given by the matrix $A$ under the ordered basis $(e_1,..,e_n)$ for $W_{F_v}$  corresponds to the elements of  $G_v(F_v)$ given by  \[\begin{pmatrix}
A & (A-I_n)v \\
0 & 1
\end{pmatrix} \times A\] under the  ordered basis $(v_1,...,v_n,v_1+...+v_{n+1});(e_1,...,e_n)$ for $V_{F_v}\times W_{F_v}$.

 Take a strictly $B_1$-dominant cocharacter $\mu_1=(b_1,...b_{n+1})\in X_*(T_1)$ and a strictly $B_2$-dominant cocharacter $\mu_2=(a_n,...,a_1)\in X_*(T_2)$,  where $b_1>b_2>...>b_{n+1}$ and $a_1<a_2<...a_n$. Then $\mu=(\mu_1, \mu_2) \in X_*(T_1 \times T_2)$ is a strictly $B$-dominant cocharacter. Set $\tau=\mu(\pi)$ (again $\pi$ is a uniformizer of $F_v$), obviously the stabilizer conjecture holds. For any non-negative integer $m$, define the subgroup $H_{v,m}=H_v(F_v)\bigcap \tau^{m}\mathcal{G}_v(\bO_F)\tau^{-m}$ similarly.

As in our main example, we have a formal argument for condition $\heartsuit$ (section \ref{normreal}), the trivial intersection property implies that the conductor grows into infinity. Now we explicit the numbers $c(m,i)$ involved in the abstract norm relation (theorem \ref{absnormrel}).

The main task is still computing conductors of $H_{v,m}$. Suppose $m>0$. Set $w=\min\{a_{i+1}-a_i,b_j-b_{j+1}\}$. Then we have $\det(H_{v,m})= 1+\pi^{mw}\bO_{F_v}$, so the conductor is exactly  $mw$ and  the condition $\clubsuit$ is also satisfied. Denote by  $q$  the cardinality of the residue field. Then the explicit formula for $c(m,i)$ is \[c(m,i)=q^{i(\sum_{k=1}^{n+1}(n+2-2k)b_k+\sum_{k=1}^{n}(n+1-2k)a_{n+1-k}-w)}.\] Then we can apply the general method in section \ref{normreal} to construct special cycles with norm relations.

Next we consider the similitude version. For simplicity we work over $\mathbb{Q}$ and suppose the quadratic imaginary field is $E$. The similitude version is \[\textbf{H}=GU(1,n-1)\times_{\mathbb{G},m} GU(0,1) \longrightarrow \textbf{G}=GU(1,n-1) \times GU(1,n).\] This pair is similar to the usual unitary GGP pair. The advantage is that they are the usual PEL type $GU$ Shimura varieties (the  $U(n)$ Shimura variety is only abelian type Shimura variety), have classical moduli explanations.

As in the $Gsp_4$ example (section \ref{Gsp4}), we introduce  $\textbf{T}^1=U(1)=\ker(GU(1)\overset{Norm}{\longrightarrow}G_{m,\mathbb{Q}})$ and consider the map $q:\textbf{H}\longrightarrow \textbf{T}^1$ sending $(A,z)$ to $\frac{det(A)}{z^{n}}$.

The argument for tame relations is the same as in our previous examples. For norm relations, like in the usual unitary case, we also work over a split prime $p$. This pair will split as $(GL(n)\times GL(1))\times \mathbb{G}_m\longrightarrow GL(n)\times \mathbb{G}_m \times GL(n+1) \times \mathbb{G}_m$, where the three copies of $\mathbb{G}_m$ correspond to the similitude factor. Such similitude factor will influence nothing and we can ignore it. Then we will only need to consider the embedding $GL(n)\times GL(1)\longrightarrow GL(n)\times GL(n+1)$. Then it is very similar to the above usual unitary GGP example. The construction of maximal torus, Borel subgroups etc are the same. Although in this case, we don't have trivial intersection property, the intersection is $GL(1)$ diagonally embedded  in the center of $GL(n)\times GL(1)$, but this intersection lies in the kernel of our map $q$, thus just as in our $Gsp_4$ example, the conductor also grows to infinity (condition $\heartsuit$ in section \ref{normreal} holds). Moreover, the computation of conductors are also the same. Then condition $\clubsuit$ in section \ref{normreal} also holds. Thus we can apply the same argument to produce special cycles with norm relations.

There is another possible variant. We can also consider the embedding \[\textbf{H}=GU(1,n-1)\times_{\mathbb{G}_m} GU(0,1)^{n}\longrightarrow \textbf{G}=GU(1,2n-1).\] Here the product are all over $\mathbb{G}_m$ through the similitude map. And we will use the map $q:\textbf{H}\longrightarrow U(1)$, sending $(A,z_1,...,z_n)$ to $\frac{det(A)}{\prod_{i=1}^{n}z_i}$ to cut out a  one dimensional torus quotient. Similar argument will produce special cycles with tame relations and norm relations over split primes. Here for norm relations, there is a slight change. To obtain the spherical condition, we can not use Borel subgroups, so we will instead work with a suitable parabolic subgroup of $GL(2n)$ corresponding to the partition $(n,n)$ of $2n$.  Direct matrix computations show that the conditions $\clubsuit$ and $\heartsuit$  of section \ref{normreal} hold.  Then the general method still works. And finally we  make a remark. This pair is very similar to the pair $(GU(1,n-1)\times_{\mathbb{G}_m}GU(0,n),GU(1,2n-1))$, which is already studied by Andrew Graham and Syed Waqar Ali Shah in \cite{graham2020}. They also construct cycles with tame relations and norm relations over split primes using the  methods of Loeffler's school.

\subsection{Further development}

\subsubsection{Cohomology theory}
\label{coh}

In this section, we will relate cycles to cohomology theory. The general method is to apply $l$-adic Abel-Jacobi map. We will use the usual spectral sequence to construct it. See \cite{ajmap} for more details.

Let $X$ denote a smooth variety which is equidimensional of dimension $d$ over a field $k$ ($char(k)\neq l$). Let $\overline{X}\times_{k}k^{sep}$ where $k^{sep}$ is a separable closure of $k$. For each $k\subset k^{'} \subset k^{sep}$, we have the Hochschild-Serre spectral sequence \[E_2 ^{i,j}=H^{i}(Gal_{k^{'}},H^{j}(\overline{X},\mathbb{Z}_l(n)))\Rightarrow H^{i+j}(X\times_{k}k^{'}, \mathbb{Z}_l(n) ),\] which degenerates at $E_2$ and gives a map \[\ker(H^{2i}(X\times_{k}k^{'},\mathbb{Z}_l(n))\longrightarrow H^{2i}(\overline{X},\mathbb{Z}_l(n)))\longrightarrow H^{1}(Gal_{k^{'}},H^{2i-1}(\overline{X},\mathbb{Z}_l(n))) .\] Thus we get the $l$-adic Abel-Jacobi map \[AJ_l:(\mathcal{Z}^{n}(X\times_{k}k^{'}))_0 \longrightarrow H^1(Gal_{k^{'}}, H^{2n-1}(\overline{X},\mathbb{Z}_l(n))),\] here $\mathcal{Z}^{i}(X\times_{k}k^{'})$ is the Chow group of codimension $n$ cycles, $(\mathcal{Z}^{n}(X\times_{k}k^{'}))_0$  is the kernel of the cycle class map $\mathcal{Z}^{n}(X\times_{k}k^{'}) \longrightarrow H^{2n}(\overline{X},\mathbb{Z}_l(n))$. We call elements in $(\mathcal{Z}^{n}(X\times_{k}k^{'}))_0$ cohomologically trivial cycles.

 Applying this method to the Shimura variety $X=Sh_\textbf{G}$, which is $2n-1$-dimensional (suppose the quadratic space is $2n+1$-dimensional), we can map cohomologically trivial special cycles (constructed by $Sh_\textbf{H}$) to the cohomology of  $\overline{X}$. Here first we need some techniques to make our special cycles cohomologically trivial.  One possible method  is to apply a parity projector whose  existence is the subject of  the standard sign conjecture. We refer to \cite{morel2019} for more details. See also \cite{rapoport2020} (section 6.2) for unitary Shimura varieties. Another standard method is to use  localization at a suitable  maximal ideal of the Hecke algebra. This technique is widely used when vanishing results away from the middle degree are available. For example, see \cite{lt2020} section 4 for Hilbert modular varieties. Here we just remark that this technique is perfectly suitable for our $Gsp_4$ example. Because such vanishing results for Siegel modular varieties has already been proved in \cite{mt2000} (under some assumptions that have subsequently been established in  \cite{shin}).

 Suppose these special cycles have been trivialized, then we can use them to  construct  an Euler system via the methods in section \ref{tamereal} and section \ref{normreal}. For  norm relations, we need the ordinary condition to construct a norm compatible family. Our tame relations involve the Hecke polynomial. Under the  congruence conjecture of Blasius-Rogawski (see \cite{blasius1994}), this polynomial is related with the characteristic polynomial of the Frobenius, which corresponds to the requirement of tame relation in Euler systems. This conjecture has now been established in many cases. For example, Si Ying Lee  proved this conjecture for Hodge type Shimura varieties under some assumptions in \cite{lee2020}. More recently,  Zhiyou Wu proved this conjecture for all Hodge type Shimura varieties in \cite{wu2021s} as a corollary of the $S=T$ conjecture (see \cite{xiao2017cycles}). This covers the $GSp_4$ example, and the $GU$ analogues of the GGP-pair examples, but it seems that for our main example, where $Sh(\textbf{G})$ is an abelian type Shimura variety, the congruence conjecture remains unknown.

\subsubsection{Arithmetic application}

In the  previous section, we have seen how to relate special cycles to cohomology classes. Thus we can define an Euler system coming from special cycles for the resulting Galois representation. Standard argument of Euler systems will give upper bound for  the Selmer group. For example, Cornut has deduced a rank one result in \cite{cornut2018} under some  assumptions.

Of course for any application, we first have to show that the resulting Euler system is nontrivial.  Such thing is very difficult to verify. The main idea is to  connect special elements to $L$-function. One method is Gross-Zagier formula. For example in Heegner points case, we have Gross-Zagier formula relating heights of Heegner points with derivatives of $L$-functions. Regarding  examples in this paper, unfortunately the Gross-Zagier formula for them is still widely open. The GGP program can be seen as an attempt to generalize the Gross-Zagier formula. For unitary GGP pair, there are many developments now, such as the arithmetic fundamental lemma program, the relative trace formula method etc. We refer to \cite{zhang2018} for more details.  Another strategy to study special cycles is to apply $p$-adic methods. For example, Henri Darmon and Victor Rotger  developed a $p$-adic Gross-Zagier formula to study the generalised Gross-Kudla-Schoen diagonal cycles (see \cite{darmon2014},\cite{darmon2017}). What's more, we can also investigate explicit reciprocity laws. For example, Loeffler and Zerbes established an explicit reciprocity law for their Euler system of $Gsp_4$ (not constructed by cycle class from special cycles), and they proved new cases of the Bloch-Kato conjecture (see \cite{loeffler2020bloch}). They used  higher Hida theory to study $p$-adic $L$-functions (see \cite{loeffler2019higher}).  A different routine is through arithmetic level raising. For example we refer to \cite{wang2020}  for more details about such method.

\section{Appendix: The stabilizer conjecture}
\label{conjecture}

In this section we will work in a local setting. First let's recall our setting in section \ref{absnorm}.

Let $F$ be a $p$-adic field ($p$ is an odd prime) with a uniformizer $\pi$ and denote its residue field $\bO_F/\pi$ by $k$. Let $\textbf{G}$ denote  a reductive group scheme over $\spec(\bO_F)$ and let \textbf{H} be  a closed reductive subgroup scheme of \textbf{G}.  We assume this pair $(\textbf{G},\textbf{H})$ to be spherical over $\spec(\bO_F)$: there exists a Borel subgroup scheme $\overline{\textbf{B}}$ of $\textbf{G}$  such that the $\textbf{H}$-orbit of $[1]$ in $\textbf{G}/\overline{\textbf{B}}$ is open, equivalently, $Lie(\textbf{H})+Lie(\overline{\textbf{B}})=Lie(\textbf{G})$. Take a maximal torus $\textbf{T}$ inside $\overline{\textbf{B}}$ and denote by $\textbf{B}$ the Borel subgroup of $\textbf{G}$ opposed to  $\overline{\textbf{B}}$ with respect to $\textbf{T}$.   Denote $G=\textbf{G}(F)$, $H=\textbf{H}(F)$, $K=\textbf{G}(\bO_F)$ and $K_H=\textbf{H}(\bO_F)=H \bigcap K$. Choose a strict $\textbf{B}$-dominant cocharacter $\mu$ of $\textbf{T}$ and set $\tau=\mu(\pi)$.  For any non-negative integer $m$, define  $\overline{H_m}=\tau^{m}K \tau^{-m} \bigcap \textbf{H}(F)  $, $H_m=\overline{H_m} \bigcap K$. Denote their images in $H^{ab}=\frac{H}{H^{der}}$ by $H_{m}^{ab}$ and $\overline{H_{m}} ^{ab}$. And define $x_m=[\tau^{m}] \in G/K $. We consider the following two properties:

(1) For any $m$, we have $H_m=\overline{H_m}$.

(2) For any $m$, we have $H_m ^{ab}=\overline{H_m }^{ab}$.

Obviously we know $H_m \subset \overline{H_m}$, $H_m ^{ab} \subset \overline{H_m }^{ab}$ and property (1) implies property (2). Property (1) is equivalent to say that the $H$-stabilizer of $x_m \in G/K$  also stabilizes $x_0 \in G/K$.  We propose the following conjectures:

\textbf{Stabilizer Conjecture:} Property (1) always holds.

\textbf{Weak Stabilizer Conjecture:} Property (2) always holds.

In section \ref{normreal}, we need to compute conductors and constants $c(m,i)$.  The weak version is sufficient for our application to norm relations.  At present we don't know how to deal with this weaker conjecture in general directly. We will propose some methods for the stabilizer conjecture. The first observation is that we may assume that $\textbf{G}$ is split. Because we can take a finite unramified extension $F\longrightarrow K$ to split $\textbf{G}$, and consider the spherical pair $(\textbf{H}_{\bO_K},\textbf{G}_{\bO_K})$. If $(\textbf{H}_{\bO_K},\textbf{G}_{\bO_K})$ satisfies the stabilizer conjecture, then $(\textbf{H},\textbf{G})$ also satisfies the stabilizer conjecture.

In fact, our main example has a stronger property than property (1). To state it, we first need some preparation about filtrations and Bruhat-Tits building theory. We refer to \cite{dat} for filtrations, \cite{tits} for buildings, \cite{cor} for some generalization and relations between buildings and filtrations.

Let $\textbf{B}(\textbf{G}_F)$ denote the extended Bruhat-Tits building of $\textbf{G}_F$. Assume $\textbf{G}_F$ is semisimple, then there is a  $\textbf{G}_F$ invariant metric (unique up to scalar) on $\textbf{B}(\textbf{G}_F)$, \[d: \textbf{B}(\textbf{G}_F) \times \textbf{B}(\textbf{G}_F)\longrightarrow  \mathbb{R}_{\geq 0} .\] And $(\textbf{B}(\textbf{G}_F),d)$ is a CAT(0)-space. For general reductive groups, there also exists such invariant metric but it may not be  unique up to scalar. See \cite{cor} section 6.2 for more details. Now we fix  such a metric. In our situation, we can identify $G/K$ with a $G$-orbit of hyperspecial vertices  in $\textbf{B}(\textbf{G}_F)$ and thus view $x_m$ as an hyperspecial vertex in $\textbf{B}(\textbf{G}_F)$.  Through $\textbf{H}(\bO_F)\subset \textbf{G}(\bO_F)$, we can embed the building $\textbf{B}(\textbf{H}_{F})$ for $\textbf{H}_F$ into a closed convex subset of $\textbf{B}(\textbf{G}_F)$ and $x_0=[1]$ becomes a common origin. See \cite{landvogt2000} for more details.  Because $\textbf{B}(\textbf{H}_{F})$ is a closed convex set, there is a convex projection \[pr:\textbf{B}(\textbf{G}_F) \longrightarrow \textbf{B}(\textbf{H}_{F}),\] sending $x$ to its closest point in  $\textbf{B}(\textbf{H}_{F})$. Now we consider the following property:

(0) We have $pr(x_m)=x_0$.

Because the projection $pr$ is $H$-invariant, property (0) implies property (1). We will verify property (0) for our main example. Then it satisfies the stabilizer conjecture. We will use  tools about buildings and filtrations from \cite{cor}. First we will translate property (0) into a property about filtrations.

Following \cite{cor}, let $\Gamma=(\Gamma,+, \leq)$ denote a totally ordered commutative group,  $\mathbb{D}_{\widetilde{S}}(\Gamma)$ denote the diagonalized multiplicative group over a base scheme $\widetilde{S}$ with character group $\Gamma$.  For a reductive group $\widetilde{G}$ over $\widetilde{S}$, Cornut defined and studied the following sequence between $\widetilde{S}$-schemes in \cite{cor} section 2 \[\mathbb{G}^{\Gamma}(\widetilde{G})\overset{Fil}{\longrightarrow}\mathbb{F}^{\Gamma}(\widetilde{G})\overset{t}{\longrightarrow}\mathbb{C}^{\Gamma}(\widetilde{G}),\] here $\mathbb{G}^{\Gamma}(\widetilde{G})=\underline{\mathrm{Hom}}(\mathbb{D}_S(\Gamma),\widetilde{G})$. Also see \cite{cor} section 1 for some motivations. Here we will only work over $S=\spec(R)$ with   $R$ equal to one of the  local rings $F$, $\bO_F$ or $k$ and $\Gamma=(\mathbb{R},+, \leq)$. For simplicity we denote $\textbf{F}(\widetilde{G})=\mathbb{F}^{\mathbb{R}}(\widetilde{G})(R)$ (filtrations). According to \cite{cor} section 4.1.15, there is an additive structure on $\textbf{F}(\widetilde{G})$, \[\textbf{F}(\widetilde{G}) \times \textbf{F}(\widetilde{G}) \overset{+}{\longrightarrow}\textbf{F}(\widetilde{G}).\] The choice of a faithful finite dimensional representation $\rho$  induces a $\widetilde{G}(R)$-invariant scalar product \[\langle-,-\rangle_{\rho}:\textbf{F}(\widetilde{G}) \times \textbf{F}(\widetilde{G}) \longrightarrow \mathbb{R}.\] See \cite{cor} section 4.2 (especially 4.2.2 and 4.2.10) for more details. Here in application we will always fix  such a representation first and simplify $\langle-,-\rangle_{\rho}$ as $\langle-,-\rangle$. Moreover, according to corollary 87 in \cite{cor}, any embedding between groups $\widetilde{H}\hookrightarrow \widetilde{G}$ will induce an embedding of filtrations $\textbf{F}(\widetilde{H}) \hookrightarrow\textbf{F}(\widetilde{G})$.

Now we connect filtrations  and buildings.  There is an action of $\textbf{F}(\textbf{G}_{F})$ on $\textbf{B}(\textbf{G}_{F})$ \[\textbf{B}(\textbf{G}_F) \times \textbf{F}(\textbf{G}_F) \overset{+}{\longrightarrow} \textbf{B}(\textbf{G}_F).\] And $\textbf{B}(\textbf{G}_F)$ becomes an affine $\textbf{F}(\textbf{G}_F)$-space. See \cite{cor} section 6.2 for more details. Be careful about the following sign issue:  for  $x_0$ and $\tau=\mu(\pi)$ as above, we have $\tau(x_0)=x_0+\mathcal{F}_{\mu^{-1}}$, where   $\mathcal{F}_{\mu^{-1}}$ is the filtration determined by $\mu^{-1}$ ($\mathcal{F}_{\mu^{-1}}=Fil(\mu^{-1})$). This can be traced back to  the definition of buildings,  see \cite{tits}, section 1. By \cite{cor} section 6.4.8, there is a reduction map \[\textbf{F}(\textbf{G}_F)\overset{\cong}{\longleftarrow}\textbf{F}(\textbf{G})\overset{red}{\longrightarrow}\textbf{F}(\textbf{G}_k).\] Through this reduction map, according to \cite{cor} section 6.4.13, section 5.5.2 and section 5.5.12, the property (0) is equivalent to the following property:

(A) Over the residue field $k$, for any filtration $\mathcal{F} \in \textbf{F}(\textbf{H}_k)\subset \textbf{F}(\textbf{G}_k)$, we have $ \langle \mathcal{F},red(\mathcal{F}_{\mu^{-1}})\rangle \leq 0$.

We will prove that our main example satisfies property (A) in the following. Before that, we first briefly take  $GL(n)$ as an example for the above concepts. This example is already widely explained, such as in \cite{cor} section 6.1. Here we illustrate it for the benefit of readers.

Consider a $m$-dimensional space $V$ over $F$ and the general linear group $\widetilde{G}=GL(V)$. We will always fix its standard representation to compute scalar products of filtrations.  Its building can be identified with the space of $F$-norms on $V$. And the subset of hyperspecial vertices $hyp(\widetilde{G})$ can be identified with the set of $\bO_F$-lattices in $V$. A filtration on $V$ is a decreasing map with special properties \[\mathcal{F}: \mathbb{R} \longrightarrow \{F\mathrm{-subspaces}\,of\,X\},\quad x \longrightarrow \mathcal{F}^{x}.\] Here the set of subspace of $V$ is partially ordered by inclusion and we require $\mathcal{F}$ to satisfy the following two conditions:

(i) There exists $x \in \mathbb{R}$ such that $\mathcal{F}^{x}=(0)$ and $\mathcal{F}^{-x}=V$, i.e. the filtration is exhaustive and separating.

(ii) Let $\displaystyle \mathcal{F}^{x^{-}}=\bigcap_{y<x}\mathcal{F}^{y}$, then $\mathcal{F}^{x}=\mathcal{F}^{x^{-}}$ (left continuous).

Denote $\displaystyle \mathcal{F}^{>x}=\mathcal{F}^{x^{+}}=\bigcup_{y>x}\mathcal{F}_y$ , $\displaystyle \mathcal{F}^{\geq x}=\bigcup_{y \geq x}\mathcal{F}_y=\mathcal{F}^{x}$ and $gr^{x}_{\mathcal{F}}(V)=\mathcal{F}^{\geq x}/\mathcal{F}^{>x}$. And we refer to  elements in  $\{x|gr^{x}_{\mathcal{F}}(V)\neq 0 \}$ as the breaks  (or jumps) of $\mathcal{F}$. Suppose these elements are $y_l>y_{l-1}>...>y_1$, we may also denote $\mathcal{F}$ as a tuple $(y_l,F^{y_l},y_{l-1},F^{y_{l-1}},...,y_1)$. If all these breaks are integers, then we call $\mathcal{F}$ a $\mathbb{Z}$-filtration, and the subset $\textbf{F}^{\mathbb{Z}}(\widetilde{G})=\mathbb{F}^{\mathbb{Z}}(\widetilde{G})(F)$ consists of $\mathbb{Z}$-filtrations. For a subspace $\widetilde{V}\subset V$, we can restrict $\mathcal{F}$ to $\widetilde{V}$ and denote it by $\mathcal{F}|_{\widetilde{V}}$, $\mathcal{F}|_{\widetilde{V}} ^{x}=\mathcal{F}^{x} \bigcap \widetilde{V}$. For a quotient $\Delta:V  \twoheadrightarrow \overline{V}$ we can also define $\mathcal{F}|_{\overline{V}}$, $x\longrightarrow \Delta(\mathcal{F}^{x})$. Combining these, we can define a filtration $\mathcal{F}|_{V^{'}}$ for any sub-quotient of $V$. Now we describe $\mathbb{G}^{\mathbb{Z}}(\widetilde{G})(F)\overset{Fil}{\longrightarrow}\mathbb{F}^{\mathbb{Z}}(\widetilde{G})(F)$. The elements in the left side correspond to  cocharacters $\alpha:\mathbb{G}_{m} \longrightarrow GL(V)$. Through such $\alpha$, $V$ has a weight decomposition, $V=\oplus_{i}V(i)$, where $V(i)=\{v \in V, \alpha(t)(v)=t^{i}(v)\}$. For any $x \in \mathbb{R}$, define $\displaystyle V^{x}=\bigoplus_{y \geq x, y \in \mathbb{Z}}V(y)  $, then the map $\mathcal{F}:x \longrightarrow \mathcal{F}^{x}=V^{x}$ is a $\mathbb{Z}$-filtration and it is exactly $Fil(\alpha)$. If we take a Borel pair $(\widetilde{B},\widetilde{T})$, then the map $Fil:X^{+}(\widetilde{T})\longrightarrow \textbf{F}(\widetilde{G})$ is compatible with the addition maps. Now we describe the scalar product. For a filtration $\mathcal{F}$, we  define $\deg(\mathcal{F}|V)=\sum_{x}x \dim(gr^{x}_{\mathcal{F}}(V))$.  For two filtrations $\mathcal{F}_1$, $\mathcal{F}_2$, their scalar product is as follow \[\langle \mathcal{F}_1,\mathcal{F}_2\rangle=\sum_{x,y}xy \dim \frac{\mathcal{F}_1 ^{x}\bigcap \mathcal{F}_2^{y}}{\mathcal{F}_1^{>x}\bigcap \mathcal{F}_2^{y}+\mathcal{F}_1^{x}\bigcap \mathcal{F}_2^{>y}}\] \[=\sum_x x \deg(\mathcal{F}_2|_{gr_{\mathcal{F}_1}^{x}(V)})=\sum_x x \deg(\mathcal{F}_1|_{gr_{\mathcal{F}_2}^{x}(V)}).\]Notice that we can rewrite $\deg(\mathcal{F}|V)=\langle \mathcal{F}, (0)\rangle$.

Now we illustrate the relation between filtrations and buildings. The action of $\textbf{F}^\mathbb{Z}(\widetilde{G})=\mathbb{F}^{\mathbb{Z}}(\widetilde{G})(F)$ on $Hyp(\widetilde{G})$ is as follow \[L+\mathcal{F}=\sum_{i\in\mathbb{Z}}\frac{1}{\pi^{i}}L \bigcap \mathcal{F}^{i}.\] Take a lattice $L_0$, it will give a reductive integral model $\mathcal{G}=GL(L_0)$ over $\spec(\bO_F)$. The reduction map for filtrations is as follows \[red(\mathcal{F}):x\longrightarrow (\mathcal{F}^x \bigcap L_0)/\pi.\]

Now let's come back to the proof of (A) for our main example. First let's recall its construction in section \ref{normreal}. Let $F \longrightarrow E$ denote a quadratic unramified extension ($E$ is a field or split as $F \times F$) and let $\bO_E$ denote the integral closure of $\bO_F$ in $E$.  Let  $(V,\phi)$ denote a $2n+1$-dimensional quadratic $F$-space with an $n$-dimensional hermitian $E$-space $(W,\psi)$. Our main example (over generic fiber) is $(SO(V),U(W))$. Now recall the notation of  special basis (see section \ref{normreal}). Take an element $\eta \in \bO_{E}^*$ with $\eta+\overline{\eta}=0$, we have the following orthogonal $F$-basis for $V$, \[\beta= (v_0,w_1,v_1,w_2,...,w_n,v_n),\] where $\{w_1,...,w_n\}$ is an orthogonal $E$-basis for $W$, $V=W\perp Fv_n$,  $v_{i-1}=\eta w_i$ $(1 \leq i \leq n)$ and $\phi(w_i,w_i)+\phi(v_i,v_i)=0$. It defines two orthogonal decomposition of $V$, \[V=E{w_1}\perp...\perp Ew_n\perp F v_n=Fv_0\perp H_1\perp...\perp H_n,\] where $H_i=Fw_i\perp Fv_i$ is an hyperbolic $F$-plane with isotropic lines spanned by $e_{\pm i}=\frac{1}{2}(v_i \pm w_i)$. Define $e_0=v_0$ and consider the ordered basis $(e_n,..,e_{-n})$.  Consider the $\bO_F$-lattice $L_0$ spanned by this basis, it is a self-dual lattice and gives us a reductive integral model $\textbf{G}=SO(L_0)$ over $\spec(\bO_F)$. Through $SO(L_0)\hookrightarrow GL(L_0)$ we get  a Borel pair $(\textbf{T},\textbf{B})$ for $\textbf{G}$ ($\textbf{B}$ corresponds to upper triangular matrices under this ordered basis). Consider the $\bO_E$-lattice $\widetilde{L}$ spanned by $(w_1,...,w_n)$, it defines a reductive integral model $\textbf{H}=U(\widetilde{L})$. It is easy to check $\textbf{H} \bigcap \overline{\textbf{B}}=1$ ($\overline{\textbf{B}}$ is the Borel group opposite to $\textbf{B}$ with respect to $\textbf{T}$) and then $(\textbf{H},\textbf{G},\overline{\textbf{B}})$ satisfies the spherical condition by dimension reason.

Fix the standard representation for orthogonal group to compute scalar product of filtrations. Take a strict $\textbf{B}$-dominant cocharacter $\mu \in X_* ^{+}(\textbf{T})$ associated to $(s_n,...,s_1,0,-s_1,...,-s_n)$, where $s_i$ are integers with $0<s_1<...<s_n$, we will verify property (A) for $red(\mathcal{F}_{\mu^{-1}})$. We first make some simplifications to our notations. Let $V_k$ denote the $2n+1$-dimensional quadratic $k$-space $L_0/\pi$, $h$ denote $\bO_E/\pi$, $\theta$ denote $\eta/\pi$, $W_k$ denote the $n$-dimensional hermitian $h$-space $\widetilde{L}/\pi$, $G_k$ denote the orthogonal group $\textbf{G}_k=SO(V_k)$ and $H_k$ denote the unitary group $\textbf{H}_k=U(W_k)$. Define the ordered basis $(f_n,...,f_{-n})=(\overline{e_{-n}},...,\overline{e_n})$ for $V_k$ and define a cocharacter $\zeta=(s_n,...,s_1,0,-s_1,...,-s_n)$ under this ordered basis. Then $red(\mathcal{F}_{\mu^{-1}})=\mathcal{F}_{\zeta}$.

Through the natural embedding $SO(V_k)\hookrightarrow GL(V_k)$, we embed $\textbf{F}(G_k)$ into $\textbf{F}(GL(V_k))$.

Denote $s_0=0$, $s_{-i}=-s_i$ ($1\leq i \leq n$). For each $-n\leq i \leq n$, define $\displaystyle V_i=\oplus_{j \geq i}k(f_j)$. The filtration $\mathcal{F}_{\zeta}$ corresponds to this tuple $(s_n,V_n,...,s_{-n})$.  It has a self-dual property, i.e. $V_{-i}=V_{i+1}^{\perp}$ ($0 \leq i \leq n-1$) and $s_{-i}=-s_i$ ($0\leq i \leq n$). In fact, any filtration in $\textbf{F}(G_k)$ has this property.

The following observation shows that the filtration $\mathcal{F}_{\zeta}$ is "orthogonal" to the $h$-linear structure.

\begin{lem}
For any vector $v=\sum_{i=1}^{m}c_if_i$ with $1 \leq m \leq n-1$ and $c_{m}\neq 0$, we have $\theta(v)\in V_{-m-1}-V_{-m}$.
\end{lem}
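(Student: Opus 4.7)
The plan is to compute $\theta(f_i)$ explicitly for each $1 \le i \le m$ and then read off the lowest nonzero index appearing in $\theta(v) = \sum_{i=1}^{m} c_i\,\theta(f_i)$.

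First, by construction $f_i = \overline{e_{-i}}$, and for $1 \le i \le n-1$ we have $e_{-i} = \tfrac12(v_i - w_i) \in W$, so its lift to $\widetilde{L}$ lies in the hermitian sublattice on which $\bO_E$ (hence $\eta$) acts. Interpreting $\theta$ as the $h$-action on $W_k = \widetilde{L}/\pi$ induced by multiplication by $\eta$ on $\widetilde{L}$, one has $\theta(f_i) = \overline{\eta \cdot e_{-i}}$.

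Next, I would use the defining relations $\eta w_j = v_{j-1}$ (for $1 \le j \le n$) and $\eta v_j = \eta^2 w_{j+1}$ (for $0 \le j \le n-1$) to compute
\[
\eta \cdot e_{-i} \;=\; \tfrac12\bigl(\eta v_i - \eta w_i\bigr) \;=\; \tfrac12\bigl(\eta^2\, w_{i+1} - v_{i-1}\bigr).
\]
Expanding in the $e$-basis via $w_{i+1} = e_{i+1} - e_{-(i+1)}$, $v_{i-1} = e_{i-1} + e_{-(i-1)}$ for $i \ge 2$, and $v_0 = e_0$, and then reducing modulo $\pi$ (so $\overline{e_j} = f_{-j}$), one obtains for $2 \le i \le n-1$
\[
\theta(f_i) \;=\; \tfrac{\overline{\eta}^{\,2}}{2}\bigl(f_{-(i+1)} - f_{i+1}\bigr) \;-\; \tfrac12\bigl(f_{-(i-1)} + f_{i-1}\bigr),
\]
together with the boundary formula $\theta(f_1) = \tfrac{\overline{\eta}^{\,2}}{2}(f_{-2} - f_2) - \tfrac12\,f_0$.

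Finally, within each $\theta(f_i)$ the strictly-most-negative $f$-index that appears is $-(i+1)$. Summing $c_i\,\theta(f_i)$ over $1 \le i \le m$, no index strictly below $-(m+1)$ can occur, so $\theta(v) \in V_{-m-1}$. The coefficient of $f_{-(m+1)}$ in $\theta(v)$ is contributed only by $i = m$ and equals $\tfrac{c_m\,\overline{\eta}^{\,2}}{2} \ne 0$ (using $c_m \ne 0$, $\overline{\eta} \in h^{*}$ and $\mathrm{char}(k) \ne 2$), so $\theta(v) \notin V_{-m}$. The only real obstacle is index bookkeeping across the three labelings $(w_i,v_i)$, $(e_j)$ and $f_j = \overline{e_{-j}}$; once aligned, the absence of cancellation at $f_{-(m+1)}$ is automatic because $-(i+1) > -(m+1)$ for every $i < m$.
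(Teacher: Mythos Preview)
Your proof is correct and follows essentially the same route as the paper: both compute $\theta(f_i)$ via $f_i=\tfrac12(\theta\,\overline{w_{i+1}}-\overline{w_i})$ and identify the $f_{-(m+1)}$-coefficient $\tfrac{c_m\theta^2}{2}\neq 0$ as the obstruction to lying in $V_{-m}$. The only cosmetic difference is that the paper first records the decomposition $V_{-m}=\bigoplus_{j=m+1}^{n}kf_j\oplus k\,\theta(\overline{w_{m+1}})\oplus\bigoplus_{j=1}^{m}h\,\overline{w_j}$ and argues with the $\overline{w_j}$'s, whereas you stay in the $f$-basis throughout; your direct expansion is arguably cleaner for this lemma, while the paper's decomposition has the advantage of making the $h$-module structure of $V_{-m}$ visible.
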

\begin{proof}
   For any $0 \leq m \leq n-1$, we have $V_{-m}=\oplus_{j={m+1}}^{n}k(f_j)\oplus k(\theta(\overline{w_{m+1}})) \oplus_{j=1}^{m} h(\overline{w_j})$: The right side is a subspace of the left side and they have the same dimension ($n+m+1$ dimensional), thus they are equal.

   Now for such vector $v$, we have $\theta(v)=\sum_{i=1}^{m-1}c_i\theta (f_i)+c_{m}(\theta^{2}(\overline{w_{m+1}})-\theta(\overline{w_{m}}))$. Then it lies in $V_{-m-1}$. Because $c_m \theta^2 $ is nonzero elements in $k$ and $\overline{w_{m+1}} \notin V_{-m}$, thus $\theta(v)\in V_{-m-1}-V_{-m}$.
\end{proof}
Now we first verify property (A) for those minimal type filtrations. For any $h$-isotropic subspace $X  \subset W_k $ and a positive real number $x$, we have a filtration $\mathcal{F}_{X} \in \textbf{F}(H_k)$, $\mathcal{F}_X=(x,X,0,X^{\perp},-x)$. We have the following lemma:

\begin{lem}
\label{deg}
We have $\deg(\mathcal{F}_{\zeta}|X)  \leq 0$.
\end{lem}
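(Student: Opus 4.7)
The plan is to decompose $\mathcal{F}_\zeta$ along the isotropic flag $(V_i)_{i\ge 1}$, reducing the desired inequality to a pointwise bound on $\dim_k(X\cap V_j)$, and then prove that bound by combining the $\bar\phi$-isotropy of $X$, the inclusion $X\subset W_k$, and the $h$-stability of $X$.

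Since $\zeta$ is a cocharacter of $SO(V_k)$ one has $V_i^{\perp}=V_{1-i}$, so $\mathcal{F}_\zeta$ decomposes as the positive combination $\sum_{i=1}^n\Delta_i\mathcal{F}_{V_i}$, where $\Delta_i:=s_i-s_{i-1}>0$ and $\mathcal{F}_{V_i}$ is the elementary filtration with breaks $1,0,-1$ on the three-step flag $V_i\subset V_{1-i}\subset V_k$. By linearity of degree, writing $d_j:=\dim_k(X\cap V_j)$ and $\ell:=\dim_k X$,
\[
\deg(\mathcal{F}_\zeta|X)\;=\;\sum_{i=1}^n\Delta_i\bigl(d_i+d_{1-i}-\ell\bigr),
\]
so it suffices to prove the pointwise inequality $d_i+d_{1-i}\le\ell$ for each $1\le i\le n$.

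For this bound I would exploit three structural inputs in parallel. The $\bar\phi$-isotropy of $X$ (implied by $h$-isotropy) combined with $V_i^\perp=V_{1-i}$ gives a map $X/(X\cap V_{1-i})\hookrightarrow V_i^*$ whose image annihilates $X\cap V_i$, yielding $\ell-d_{1-i}\le(n-i+1)-d_i$; this alone is not sharp enough. The inclusion $X\subset W_k$ together with the setup's identity $W_k\cap V_n=0$ forces $d_n=0$ and $d_i\le n-i$ for $1\le i\le n-1$, cutting down the "top" of the filtration. Finally, the preceding intermediate lemma (stating $\theta v\in V_{-m-1}\setminus V_{-m}$ whenever $v\in V_1\cap W_k$ has top index $m$), combined with the $\theta$-stability of $X$, lets me pair each positive-weight element of $X$ with its $\theta$-translate lying in strictly more negative weight. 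A dual statement (proven the same way from the explicit formula for $\theta f_{-j}$) handles the symmetric pairing for elements of $X$ with smallest index $\le-1$.

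The main technical obstacle is making the pairing step precise: one must choose an $h$-basis $\{x_j\}_{j=1}^{\ell/2}$ of $X$ such that $\{x_j,\theta x_j\}_j$ forms an adapted $k$-basis of $X$ with $\ell$ pairwise distinct $v$-values, in which case $\deg(\mathcal{F}_\zeta|X)=\sum_j\bigl(s_{v(x_j)}+s_{v(\theta x_j)}\bigr)$ with each summand $\le 0$ by the lemma and strictly negative when $x_j\ne 0$. Potential cancellations arise when $x_j$ has simultaneous nonzero coefficients at $f_m$ and $f_{-m}$, whose $\theta$-images both contribute to the coefficient of $f_{-(m+1)}$ and could partially cancel. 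Ruling these out seems to require the full $h$-isotropy of $X$ (not merely $\bar\phi$-isotropy), via the explicit formulas for $\bar\psi(f_j,f_k)$ computed in the special basis $(\bar w_1,\dots,\bar w_n)$ of $W_k$ and the relations $q_{j+1}=-q_j/\bar a$ they satisfy.
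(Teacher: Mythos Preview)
Your decomposition $\mathcal{F}_\zeta=\sum_{i=1}^n\Delta_i\mathcal{F}_{V_i}$ and the reduction to the pointwise bound $d_i+d_{1-i}\le\ell$ are correct, and this is a cleaner packaging than the paper's argument. But you never actually prove that bound: your third paragraph lists ingredients without assembling them, and your fourth paragraph heads down a blind alley.

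The bound follows in one stroke from the preceding lemma and $h$-stability of $X$; no isotropy, no adapted basis, no dual statement is needed. For $0\neq v\in X\cap V_i$ (so $1\le i\le Ma(v)\le n-1$, the last inequality because $X\subset W_k$ forces $X\cap V_n=0$), the preceding lemma says $\theta v$ has a nonzero $f_{-Ma(v)-1}$-coefficient; since $-Ma(v)-1\le -i-1<1-i$, this already gives $\theta v\notin V_{1-i}$. As $\theta$ is a $k$-linear automorphism of $W_k$ and $\theta(X)=X$, the composite $X\cap V_i\xrightarrow{\theta}X\to X/(X\cap V_{1-i})$ is injective, whence $d_i\le\ell-d_{1-i}$. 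Your worries about cancellations and about needing the full $h$-isotropy are therefore misplaced: the inequality, and hence the lemma, holds for \emph{any} $h$-stable $k$-subspace $X\subset W_k$. The attempt to build an adapted $h$-basis with $\ell$ distinct $Ma$-values is unnecessary and in general will not succeed.

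For comparison, the paper does not decompose $\mathcal{F}_\zeta$. It works directly with the break set $Se=Se^+\cup Se^-$ of $\mathcal{F}_\zeta|X$, defines an injective ``reverse'' map $r$ on $Se^+$ by $r(s_i)=\min\{Ma(v):v\in(X\cap V_i)\setminus(X\cap V_{i+1})\}$, and uses the preceding lemma to show $s_{-r(s_i)-1}\in Se^-$ with $s_i+s_{-r(s_i)-1}=s_i-s_{r(s_i)+1}<0$, so the positive breaks are cancelled by distinct negative ones. The underlying mechanism is the same $\theta$-pairing you gesture at; your decomposition simply linearizes it into a family of dimension inequalities.
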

\begin{proof}
   Denote the breaks set of $\mathcal{F}_{\zeta}|X$ as $Se$. Then $Se \subset \{s_n,...,s_{-n}\} $. And we divide it into two parts, $Se=Se^{+} \coprod Se^{-}$, where $Se^{+}=\{s_i \in Se, 1\leq i \leq n\}$, $Se^{-}=\{s_i, -n \leq i \leq 0\}$.    Denote the degree $\deg(\mathcal{F}_{\zeta}|X)  $ as $S$, so that $S=\sum_{s_i \in Se}s_i=S^{+}+S^{-}$ with $S^{+}=0+\sum_{s_i \in Se^{+}}s_i$, $S^{-}=0+\sum_{s_i \in Se^{-}}s_i$. Then $S^+ \geq 0$ and $S^{-} \leq 0$.

   If $S^{+}=0$, we're done. If $S^{+}>0$, then $Se^{+} \neq \emptyset$ and we will cancel its contribution through $S^{-}$.

   Now define a function $Ma:V-{0}\longrightarrow \mathbb{Z}$.

    For any nonzero vector $v$, denote  $v=\sum_{l=-n}^{i}c_l f_l$ with $c_i \neq 0$, then define $Ma(v)=i$.

    We next define a  "\textbf{reverse}" function $r: Se^{+}\longrightarrow \mathbb{Z}$.

   Take any $s_i\in Se^{+}$, then $i\leq n-1$, and we  define  $\displaystyle r(s_i)=\min_{v \in (V_{i} \bigcap X-V_{i+1} \bigcap X)} Ma(v)$.

    By definition we know $r(s_i)\geq i$ and it is injective:

    Suppose there exists $i<j$, with $r(s_i)=r(s_j)$. By definition, there exists $v_i \in V_{i} \bigcap X -V_{i+1} \bigcap X$ (resp $v_j \in V_{j} \bigcap X-V_{j+1} \bigcap X$) such that $v_i=\sum_{l=i}^{r(s_i)}c_l f_l$ (resp $v_j=\sum_{l=j}^{r(s_j)}d_l f_l$) with $ c_{r(s_i)} \neq 0 $ (resp $d_{r(s_j)}\neq0$).  Then consider the vector $v=d_{r(s_i)}v_j-c_{r(s_j)}v_i \in V_{i} \bigcap X -V_{i+1} \bigcap X$, but $Ma(v)<Ma(v_j)$, this contradicts  the definition of $r$.

   For each $s_i \in Se^{+}$, there exists $v_i \in V_{i} \bigcap X -V_{i+1} \bigcap X$ such that $v_i=\sum_{l=1}^{r(s_i)}c_l f_l$ with $ c_{r(s_i)} \neq 0 $. By the above lemma \ref{deg}, $\theta(v_i) \in V_{-r(i)-1}-V_{-r(i)}$. Then $s_{-r(i)-1} \in Se^{-}$. Consider the sum $\displaystyle S^{'}=\sum_{s_i \in Se^{+}}s_i-s_{r(s_i)+1}$. Then $S=S^{'}+S^{''}$ with $S^{''}\leq 0$, $S^{'}<0$ (due to $r(s_i)\geq i$). Thus the degree is non-positive.
\end{proof}
By self-dual properties, we have $\langle \mathcal{F}_X, \mathcal{F}_{\zeta} \rangle=2x(\deg(\mathcal{F}_{\zeta}|X)).$ Thus this lemma  verifies property (A) for minimal type filtrations. Moreover, it also helps us to reduce the general case into the minimal case.

\begin{thm}
For any $\mathcal{F}_1 \in \textbf{F}(H_k)$, we have $\langle F_1, F_{\zeta} \rangle \leq 0$.
\end{thm}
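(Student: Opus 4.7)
The plan is to reduce an arbitrary $\mathcal{F}_1 \in \textbf{F}(H_k)$ to a non-negative combination of the minimal filtrations $\mathcal{F}_X^{(1)}$ handled by the previous lemma, and then conclude by bilinearity of $\langle-,-\rangle$. The key observation is that a filtration coming from $H_k = U(W_k)$ is automatically self-dual for the hermitian form $\psi$, so its positive-break part is entirely controlled by a flag of $h$-isotropic subspaces of $W_k$.

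First, I would use the cocharacter underlying $\mathcal{F}_1$ to produce an $h$-linear weight decomposition $W_k = \bigoplus_y W_k^{(y)}$ in which $\psi(W_k^{(y)}, W_k^{(z)}) = 0$ unless $y+z = 0$; in particular $W_k^{(y)}$ is $h$-isotropic for every $y > 0$. Listing the positive breaks as $\alpha_1 > \alpha_2 > \cdots > \alpha_l > 0$ and setting $Y_i = \bigoplus_{j \leq i} W_k^{(\alpha_j)}$, one gets a nested flag of $h$-isotropic subspaces $Y_1 \subset Y_2 \subset \cdots \subset Y_l \subset W_k$. I would then verify, with the convention $\alpha_{l+1} = 0$, the additive decomposition
\[
\mathcal{F}_1 \;=\; \sum_{i=1}^l (\alpha_i - \alpha_{i+1}) \, \mathcal{F}_{Y_i}^{(1)} \qquad \text{in } \textbf{F}(H_k),
\]
where $\mathcal{F}_{Y_i}^{(1)} = (1, Y_i, 0, Y_i^{\perp}, -1)$ as in the previous lemma. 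This is checked weight by weight on $V_k$: for $v \in W_k^{(\alpha_i)}$, the cocharacter $\mu_{Y_j}$ contributes weight $1$ when $j \geq i$ (because $v \in Y_j$) and weight $0$ when $j < i$ (because $\alpha_i \neq -\alpha_k$ for all $k \leq j$ places $v$ in $Y_j^{\perp}$), giving total weight $\sum_{j=i}^l (\alpha_j - \alpha_{j+1}) = \alpha_i$. Self-duality fixes the negative-weight part, and both sides act trivially on the weight-$0$ piece (including the line $k\overline{v_n}$ fixed by $H_k$). All the coefficients $\alpha_i - \alpha_{i+1}$ are non-negative.

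Finally, by bilinearity of the scalar product and the identity $\langle \mathcal{F}_{Y}^{(1)}, \mathcal{F}_\zeta\rangle = 2 \deg(\mathcal{F}_\zeta | Y)$ recorded just after the previous lemma,
\[
\langle \mathcal{F}_1, \mathcal{F}_\zeta \rangle \;=\; 2 \sum_{i=1}^l (\alpha_i - \alpha_{i+1}) \, \deg(\mathcal{F}_\zeta | Y_i) \;\leq\; 0,
\]
since each $Y_i$ is $h$-isotropic and the previous lemma forces every $\deg(\mathcal{F}_\zeta | Y_i) \leq 0$. The main obstacle I anticipate is in properly justifying the additive identity inside $\textbf{F}(H_k)$: one needs the cocharacters $\mu_{Y_i}$ to lie in a common maximal torus so that their sum in $X_*$ agrees with the abstract addition of filtrations. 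This is automatic once one refines $Y_1 \subset \cdots \subset Y_l$ to a complete $h$-isotropic flag of $W_k$, whose stabilizer in $U(W_k)$ contains such a torus, but one should verify that the embedding $\textbf{F}(H_k) \hookrightarrow \textbf{F}(G_k) \hookrightarrow \textbf{F}(GL(V_k))$ and the reduction map are compatible with this decomposition so that the scalar product computation pulled back to $GL(V_k)$ is the one governed by the earlier lemmas.
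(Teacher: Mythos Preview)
Your approach is essentially the same as the paper's: both reduce $\mathcal{F}_1$ layer by layer to the minimal filtrations $\mathcal{F}_X$ handled by the preceding lemma. The paper phrases this as an induction on the number of breaks (peeling off the outermost pair $(a_m,X_m,a_{-m})$ and using $\langle\mathcal{F}_1,\mathcal{F}_\zeta\rangle=\langle\mathcal{F}_2,\mathcal{F}_\zeta\rangle+2(a_m-a_{m-1})\deg(\mathcal{F}_\zeta|X_m)$), while you write the full telescoping sum at once; these are equivalent.

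One point to tighten: your appeal to ``bilinearity of the scalar product'' is not valid as stated. The pairing on $\textbf{F}(G_k)$ is \emph{not} bilinear in general (the paper itself only uses the inequality $\langle\mathcal{F},x\rangle+\langle\mathcal{F},y\rangle\le\langle\mathcal{F},x+y\rangle$ from \cite{cor} later in the appendix). The identity you need,
\[
\langle\mathcal{F}_1,\mathcal{F}_\zeta\rangle=\sum_i 2(\alpha_i-\alpha_{i+1})\deg(\mathcal{F}_\zeta|Y_i),
\]
is nonetheless correct, but it should be derived directly from the explicit formula $\langle\mathcal{F}_1,\mathcal{F}_\zeta\rangle=\sum_x x\,\deg(\mathcal{F}_\zeta|_{gr^x_{\mathcal{F}_1}(V_k)})$ together with additivity of $\deg$ on short exact sequences and the self-duality $a_{-i}=-a_i$, $X_{-i}=X_{i+1}^\perp$. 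That is precisely the computation the paper hides behind the phrase ``by self-dual properties''; once you replace ``bilinearity'' by this direct computation, your argument and the paper's coincide.
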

\begin{proof}
The case $\mathcal{F}_1=(0)$ is trivial. For other cases, the tuple corresponding to $\mathcal{F}_1$ will be  of the form $(a_m,X_m,...,a_{-m})$, where $a_i+a_{-i}=0$, $X_{-i}=X_{i+1}^{\perp}$ ($0 \leq i \leq m-1$), $X_i$ ($1\leq i \leq m$) are isotropic $h$-spaces. We have shown the case $m=1$. Now we do induction for  $m\geq 2$.

Consider another filtration $\mathcal{F}_2 \in \textbf{F}(H_k)$, defined by the tuple $(a_{m-1},X_{m-1},...,a_{-(m-1)})$. By self-duality properties, we have $\langle \mathcal{F}_1, \mathcal{F}_{\zeta}\rangle=\langle \mathcal{F}_2, \mathcal{F}_{\zeta} \rangle +2(a_{m}-a_{m-1}) \deg (\mathcal{F}_{\zeta}|X_m)$. Because $a_m-a_{m-1}>0$, by  lemma  \ref{deg} and induction argument, we're done.
\end{proof}
We make some remarks.

$\bullet$ We can relax the condition (for cocharacter $\zeta$) $s_1<...<s_m$ into $s_1\leq ...\leq s_m$, the above theorem still holds and its proof is similar.

$\bullet$ Property (A) holds for any $\mu$ which is stricly dominant with respect to a Borel subgroup in the open $\textbf{H}$-orbit. Indeed, property (A) only depens upon the filtration $\mathcal{F}_{\mu^{-1}}$, and any such filtration is split by some special basis $\beta$ as above. In other words,  property (A) is an intrinsical property of  our spherical pair $(\textbf{H},\textbf{G})$.

Now we discuss other cases, especially the eight infinite families of indecomposable pairs $(\textbf{H},\textbf{G})$ in \cite{loeffler2019} section 6.  Using similar methods, we can show that the following four  kinds of families also have property (A):

$\bullet (GL(n),Sp(2n))$ $\bullet (SO(n),GL(n))$

 $ \bullet (SO(n)\times SO(n+1), SO(2n+1))$ $ \bullet (SO(n)\times SO(n), SO(2n))$

 But in general, we shouldn't expect property (A) for spherical pairs. Below is a classical counterexample.

Suppose the pair is $(G_0, G_0 \times G_0)$ over $\spec(\bO_F)$ with $G_0$ being semi-simple.  Take a Borel pair  ($T$, $B_1$) for $G_0$ and let $B_2$ be the Borel subgroup of $G_0$ opposed to $B_1$ with respect to $T$. Then $B_1 \times B_2$ is a Borel subgroup of $G_0 \times G_0$ and $(G_0, G_0 \times G_0)$ is a spherical pair. This is a standard example. Consider a strict $B_1$-dominant cocharacter $\mu_1 \in X_*(T)$ and a strict $B_2$-dominant cocharacter $\mu_2 \in X_*(T)$; then for any positive integer $N$,  we have a strict $B_1\times B_2$-dominant cocharacter $\mu_N=((\mu_1)^N,\mu_2)\in X_*(T\times T)$. The corresponding filtration involved in property (0) is $\mathcal{F}_{\mu_N}=(N(\mathcal{F}_{\mu_1 ^{-1}}), \mathcal{F}_{\mu_2^{-1}})\in \textbf{F}(G_{0,F}\times G_{0,F})=\textbf{F}(G_{0,F})\times \textbf{F}(G_{0,F})$.  Consider the filtration $\mathcal{F}_{N}=N(\mathcal{F}_{\mu_1 ^{-1}}) \in \textbf{F}(G_{0,F}) $, then $\langle \mathcal{F}_{N} ,\mathcal{F}_{\mu_N} \rangle= N^2 \langle \mathcal{F}_{\mu_1 ^{-1}}, \mathcal{F}_{\mu_1 ^{-1}}  \rangle +N \langle \mathcal{F}_{\mu_1 ^{-1}} ,\mathcal{F}_{\mu_2^{-1}} \rangle $; since $\langle \mathcal{F}_{\mu_1 ^{-1}}, \mathcal{F}_{\mu_1 ^{-1}}  \rangle  >0$, this scalar product will be positive for $N \gg 0$. Similar argument over the residue field $k$ will produce a counterexample to property (A).

Moreover, we can produce counterexamples to property (A) for the three remaining families (the idea is similar except for the last family):

$\bullet (GL(n), GL(n)\times GL(n+1))$ $ \bullet (SO(n), SO(n)\times SO(n+1))$

$\bullet (Sp(2n),SL(2n+1))$.

Now we turn to our original conjecture concerning property (1). We have a general strategy to verify it for symmetric pairs.

The starting input is a reductive group scheme $\textbf{G}$ over $\spec(\bO_F)$ with a nontrivial involution $\theta$. Let $\textbf{H}$ denote the $\theta$ fixed subgroup. Such a pair $(\textbf{H},\textbf{G})$ is called a symmetric pair. We assume that  there exists a Borel subgroup $\textbf{B}$ of $\textbf{G}$ such that $\theta(\textbf{B})=\overline{\textbf{B}}$ is opposed to $\textbf{B}$. Then $\textbf{T}=\theta(\textbf{B}) \bigcap \textbf{B}$ is a $\theta$-stable maximal torus and  $(\textbf{G},\textbf{H})$ is a spherical pair (see \cite{helminck1993}).

 Here we make some remarks:

 $\bullet$ A parabolic group $\textbf{P}$ such that $\theta(\textbf{P})$ is opposed  to $\textbf{P}$ is called a $\theta$-split parabolic subgroup. In general, we can replace $\textbf{B}$ by a minimal $\theta$-split parabolic subgroup,  our argument below still works.

 $\bullet $ Over a field $F$, there are many studies about symmetric pairs. For example, there always exists a $\theta$-stable maximal torus and under suitable conditions, there exists a nontrivial minimal $\theta$-split parabolic subgroup, we refer to \cite{helminck1993} for more details. Moreover, the connected component of the fixed subgroup is a reductive group  by \cite{prasad2002}.

Now take a strictly $\textbf{B}$-dominant cocharacter $\mu$ of $\textbf{T}$ and define $\tau=\mu(\pi)$, we will show that property (1) holds.

Because $\mu$ is strictly $\textbf{B}$-dominant, the cocharacters $\eta=\theta(\mu)$ and $\mu^{-1}\eta$ are all  strict $\overline{\textbf{B}}$-dominant. Define $\xi=(\mu^{-1}\eta)(\pi)$ and let  $\textbf{N}$ be  the unipotent radical of $\textbf{B}$ and $\overline{\textbf{N}}$ be the unipotent radical of $\overline{\textbf{B}}$.

For any positive integer $m$, take an element $x \in \overline{H_m}=\textbf{H}(F)\bigcap \tau^{m}\textbf{G}(\bO_F)\tau^{-m}$ and suppose $x=\tau^{m}y\tau^{-m}$ ($y \in \textbf{G}(\bO_F)$).

Then we know $x=\theta(x)$, thus $\tau^{m}y \tau^{-m}=\theta(\tau)^{m}\theta(y)\theta(\tau)^{-m}$.

Therefore $y=\xi^{m}\theta(y)\xi^{-m}$. Since $y$ and $\theta(y)$ belong to $\textbf{G}(\bO_F)$, this implies  that $y$ lies in the big Bruhat cell $\overline{\textbf{N}}\times \textbf{T} \times \textbf{N}$, and moreover $y$ decomposes as $\xi^{m}\overline{n_1} \xi^{-m} \times t \times n_2$ with  $\overline{n_1} \in \overline{\textbf{N}}(\bO_F)$ and $n_2 \in \textbf{N}(\bO_F)$.

Thus $x=\tau^{m}y\tau^{-m}= \eta({\pi})^{m} \overline{n_1} \eta({\pi})^{-m} \times t \times \tau^{m} n_2 \tau^{-m} $. Then property (1) holds since $\eta$ is $\overline{\textbf{B}}$-dominant and $\mu$ is $\textbf{B}$-dominant.

Now we list some examples using this idea. Consider the diagonal embedding mentioned above, $G_0\longrightarrow G_0 \times G_0$. Here we don't need $G_0$ to be semisimple. Consider the involution $\theta$ on $G_0 \times G_0$ defined by swapping factors, $(x,y)\longrightarrow (y,x)$. Then the fixed subgroup is exactly $G_0$ and the Borel subgroup $B \times \overline{B}$ is a  $\theta$-split minimal parabolic subgroup. Thus we can apply the above method to show property (1). Another example is $SO(n)\longrightarrow GL(n)$ and the involution $\theta$ is given by $A\longrightarrow A^{-T}$.

Involutions can also be used to study property (A). If there exists an involution $\theta$ of $\textbf{F}(\textbf{G}_k)$ that preserves the scalar product, fixes $\textbf{F}(\textbf{G}_k)$ and such that $\theta(red(\mathcal{F}_{\mu^{-1}}))+red(\mathcal{F}_{\mu^{-1}})=0$, then property (A) holds:
According to \cite{cor} corollary 92, we have \[2\langle \mathcal{F}_1, red(\mathcal{F}_{\mu^{-1}})\rangle =\langle \mathcal{F}_1, red(\mathcal{F}_{\mu^{-1}}) \rangle + \langle \mathcal{F}_1, \theta(red(\mathcal{F}_{\mu^{-1}}))\rangle \leq \langle  \mathcal{F}_1,red(\mathcal{F}_{\mu^{-1}})+\theta(red(\mathcal{F}_{\mu^{-1}}))\rangle =0.\]

Finally, we mention that in some concrete cases beyond symmetric pairs, we can also use explicit calculations to show property (1). See the example $(GU(1)\times_{\mathbb{G}_m}GL_2,Gsp_4)$ in section \ref{Gsp4} and the unitary GGP pair in section \ref{unitary}.

\newpage

\bibliographystyle{plain}
\bibliography{reference}

Ruishen Zhao, IMJ-PRG, Sorbonne University,  4 place Jussieu, 75005 Paris, France

E-mail:ruishen.zhao@imj-prg.fr


\end{spacing}
\end{document}